\documentclass{amsart}


\usepackage{amsmath}
\usepackage{amsthm}
\usepackage{amssymb}
\usepackage{amsfonts}
\usepackage{bm}
\usepackage[shortlabels]{enumitem}
\usepackage[bookmarks=true,hyperindex,pdftex,colorlinks,citecolor=red, linkcolor=cyan]{hyperref}
\usepackage{centernot} 
\usepackage{marginnote} 
\usepackage{bbm}
\usepackage[all]{xy}
\usepackage{tikz}
\usetikzlibrary{arrows}
\usetikzlibrary{shapes,decorations}
\usetikzlibrary{positioning}
\usepackage{tikz-cd} 
\usepackage[normalem]{ulem}
\usepackage{enumitem}


\DeclareMathOperator{\lspan}{span}                          
\DeclareMathOperator{\supp}{supp}                           
\DeclareMathOperator{\diam}{diam}                           
\DeclareMathOperator{\Lip}{Lip}                             
\DeclareMathOperator{\lip}{lip}                             

\newcommand{\N}{\mathbb{N}}             
\newcommand{\R}{\mathbb{R}}             
\newcommand{\C}{\mathbb{C}}             

\newcommand{\abs}[1]{\left|{#1}\right|}                     

\newcommand{\set}[1]{\left\{{#1}\right\}}                   
\newcommand{\norm}[1]{\left\|{#1}\right\|}                  
\newcommand{\ball}[1]{B_{{#1}}}                             
\newcommand{\duality}[1]{\left<{#1}\right>}                 

\newcommand{\lipfree}[1]{\mathcal{F}({#1})}                 
\newcommand{\F}{\mathcal{F}}                                
\newcommand{\Free}{\mathcal{F}}                             
\newcommand{\lipnorm}[1]{\norm{#1}_L}                       

\newcommand{\restricted}{\mathord{\upharpoonright}}

\def\<{\langle}
\def\>{\rangle}

\newcommand{\indicator}[1]{{\mathbf 1}_{{#1}}}
\renewcommand{\hat}{\widehat}


\theoremstyle{plain}
\newtheorem{theorem}{Theorem}[section]
\newtheorem{lemma}[theorem]{Lemma}
\newtheorem{corollary}[theorem]{Corollary}
\newtheorem{proposition}[theorem]{Proposition}

\newtheorem{fact}[theorem]{Fact}

\theoremstyle{definition}
\newtheorem*{definition*}{Definition}
\newtheorem{definition}[theorem]{Definition}
\newtheorem{example}[theorem]{Example}
\newtheorem{question}{Question}

\theoremstyle{remark}
\newtheorem{remark}[theorem]{Remark}


\begin{document}
\title{Injectivity of Lipschitz operators}

\author[Luis C. Garc\'ia-Lirola]{Luis C. Garc\'ia-Lirola}
\address[L. Garc\'ia-Lirola]{Departamento de Matem\'aticas, Universidad de Zaragoza, 50009, Zaragoza, Spain}
\email{luiscarlos@unizar.es}

\author[C. Petitjean]{Colin Petitjean}
\address[C. Petitjean]{Univ Gustave Eiffel, Univ Paris Est Creteil, CNRS, LAMA UMR8050 F-77447 Marne-la-Vallée, France}
\email{colin.petitjean@univ-eiffel.fr}

\author[A. Proch\'azka]{Anton\'in Proch\'azka}
\address[A. Proch\'azka]{Laboratoire de Math\'ematiques de Besan\c con,
Universit\'e Bourgogne Franche-Comt\'e, Universit\'e de Franche-Comt\'e,
CNRS UMR-6623,
16, route de Gray,
25030 Besan\c con Cedex, France}
\email{antonin.prochazka@univ-fcomte.fr}


\begin{abstract}
Any Lipschitz map $f\colon M \to N$ between metric spaces can be ``linearised'' in such a way that it becomes a bounded linear operator $\widehat{f}\colon \F(M) \to \F(N)$ between the Lipschitz-free spaces over $M$ and $N$. The purpose of this note is to explore the connections between the injectivity of $f$ and the injectivity of $\widehat{f}$. While it is obvious that if $\widehat{f}$ is injective then so is $f$, the converse is less clear. Indeed, we pin down some cases where this implication does not hold but we also prove that, 
for some classes of  metric spaces $M$, any injective Lipschitz map $f\colon M \to N$ (for any $N$) admits an injective linearisation. Along our way, we study how Lipschitz maps carry the support of elements in free spaces and also we provide stronger conditions on $f$ which ensure that $\widehat{f}$ is injective.
\end{abstract}

\subjclass[2020]{Primary 46B20, 51F30; Secondary 28A78, 54E45}

\keywords{Lipschitz-free space, Lipschitz function, Support, Injectivity}

\maketitle

\section{Introduction}

For a metric space $(M, d)$, the \textit{Lipschitz-free space} (also known as Arens\textendash{}Eells space or transportation cost space) $\F(M)$ is a Banach space which is built around $M$ in such a way that $M$ is isometric to a (linearly dense) subset $\delta(M)$ of $\F(M)$, and Lipschitz maps from $\delta(M)$ into any Banach space $X$ uniquely extend to bounded linear
operators from $\F(M)$ into $X$ (see Section~\ref{Prelim:Free} for a more detailed definition).
For metric space-valued maps on $M$ the linearisation procedure takes the following form.
For every Lipschitz map $f \colon M \to N$, there exists a continuous linear map $\widehat{f} \colon \F(M) \to \F(N)$ such that its operator norm is equal to the best Lipschitz constant of $f$, and moreover the following diagram commutes: 
	$$\xymatrix{
		M \ar[r]^f \ar[d]_{\delta_{M}}  & N \ar[d]^{\delta_{N}} \\
		\F(M) \ar[r]_{\widehat{f}} & \F(N)
	}$$   
\smallskip

A recent program, which motivated quite many specialists in the field, consists in trying to characterise (linear) properties of $\F(M)$ in terms of (metric) properties of~$M$. 
Yet, while there is a number of papers dealing with this scheme (see  \cite{AGPP21, APP2019, Ambrosio, DKP16, Godard, GLPRZ, Ostrak21, OO20, PRZ18} just to name a few), 
less papers focus on how the properties of Lipschitz maps $f$ and their linearisations $\widehat{f}$ are related. 
This is precisely the purpose of this note as we will focus on injectivity. While it is well known and rather easy to see that $f\colon M \to N$ is bi-Lipschitz if and only if $\widehat{f}$ is an isomorphic embedding, the question whether the injectivity of a Lipschitz map $f\colon M \to N$ implies the injectivity of the linearisation $\widehat{f}$ has not been dealt with; and is more delicate as we aim to show in the present note.
Let us remark right away, that the converse implication trivially holds. Indeed, using the same notation as in the commutative diagram above, $f$ can be naturally identified with $\widehat{f}\restricted_{\delta_M(M)}$.
Also, since simple examples can be produced when $M$ is not complete, see Section~\ref{Prelim:Free}, we restrict our attention to complete metric spaces $M$.
\smallskip

In this context, our main results are the following.
For some classes of metric spaces $M$, we prove that for every metric space $N$, every Lipschitz injection $f\colon M \to N$ has an injective linearization $\widehat{f} \colon \F(M) \to \F(N)$. We call the spaces satisfying the latter property \emph{Lip-lin injective spaces}. This is the case of, e.g., compact spaces having null 1-dimensional Hausdorff measure ($\mathcal H^1(M)=0$). In fact, we prove that if $M$ is compact and totally disconnected, then $M$ is Lip-lin injective if and only if the space $s_0(M)$ of locally constant Lipschitz functions from $M$ to $\R$ separates points of $M$ uniformly. That is, there exists a constant $C >0$ such that for every distinct points $x \neq y$ in $M$, one can find a $C$-Lipschitz map $f \in s_0(M)$ in such a way that $|f(x) -f(y)| = d(x,y)$. 
Further we prove that uniformly discrete metric spaces are Lip-lin injective while this is not the case for every discrete metric space (see Example~\ref{e:CountableDiscrete}).
Also, if $M$ is not purely 1-unrectifiable (p1u), i.e. there exists $A \subset \R$ of positive Lebesgue measure that bi-Lipschitz embeds into $M$, then $M$ is not Lip-lin injective.
This allows us to provide examples, on the one hand, of a Lip-lin injective compact $M$ such that $\mathcal H^1(M)>0$, and on the other hand, of a p1u, compact, totally disconnected  $M$ which is not Lip-lin injective.

As a preliminary work for the above results, we investigate sufficient conditions on $f$ which guarantee that $\widehat{f}$ is injective. This is done by analysing how Lipschitz operators carry the supports of elements in free spaces. It turns out that, when $M$ is bounded, $\widehat{f}$ is injective if and only if $\supp(\hat{f}(\mu))=\overline{f(\supp(\mu))}$ for every $\mu \in \F(M)$.

Finally, we give a complete solution to the related question of when $\widehat{f}^{**}$ is injective: this happens if and only if $f$ is a bi-Lipschitz embedding. We conclude the paper with a few observations about surjectivity of Lipschitz operators. 
\medskip

\subsection{Preliminaries on Lipschitz-free spaces}~
\label{Prelim:Free}
There are several ways to build Lipschitz-free spaces (see \cite{LipschitzBook, Weaver2}). Of course, the constructions are equivalent in the sense that they give birth to isometrically isomorphic Banach spaces. 
Here we will 
use spaces of Lipschitz functions.
\smallskip
 
Let $M$ be a \textit{pointed} metric space, that is a metric space equipped with a distinguished point denoted by $0$ (the choice of the base point is not relevant). If $N$ is another metric space then $\Lip_0(M,N)$ stands for the set of Lipschitz maps $f\colon M \to N$ such that $f(0_M) = 0_N$. We will also write $\Lip(f)$ for the best Lipschitz constant of $f$. When $(N,d_N) = (\R , |\cdot|)$, we will simply write $\Lip_0(M)$ instead of $\Lip_0(M,\R)$. Note that $\Lip_0(M)$ is actually a vector space, and when equipped with the norm
\[\displaystyle
\|f\|_L := \Lip(f) =  \sup_{x \neq y \in M} \frac{|f(x)-f(y)|}{d(x,y)},\] 
it naturally becomes a Banach space. 
\smallskip

Next, for $x\in M$, we consider the linear map $\delta(x) \colon \Lip_0(M) \to \R$ given by $\langle f,\delta(x) \rangle = f(x)$. It is readily seen that $\delta(x)$ is continuous with 
$$\|\delta(x)\|_{\Lip_0(M)^*} = d(x,0).$$ 
The \textit{Lipschitz-free space over $M$} is then defined as the closed subspace of $\Lip_0(M)^*$ generated by these evaluation functionals, that is, 
\[\F(M) := \overline{ \lspan}^{\| \cdot  \|}\left \{ \delta(x) \, : \, x \in M  \right \} \subset \Lip_0(M)^*.\]
The Lipschitz-free space over $M$ is characterised (up to isometric isomorphism) by the following \textit{``universal extension property''}: any Banach space-valued Lipschitz map $f\colon M \to X$ vanishing at $0$ can be extended in a unique way to a continuous linear map $\bar{f} \colon \lipfree{M} \to X$ whose operator norm is equal to the best Lipschitz constant of $f$. As a consequence (taking $X=\R$), $\F(M)$ is an isometric predual of $\Lip_0(M)$, and the corresponding weak$^\ast$ topology on $\ball{\Lip_0(M)}$ coincides with the topology of pointwise convergence.  Another easy consequence is the linearisation property which was already discussed at the beginning of the introduction.

\begin{proposition} \label{diagramfree}
	If $f \in \Lip_0(M,N)$, then there exists a unique bounded linear operator $\widehat{f} \colon \F(M) \to \F(N)$ with $\|\widehat{f}\|=\Lip(f)$ and $\delta_N \circ f  = \widehat{f} \circ \delta_M$.
\end{proposition}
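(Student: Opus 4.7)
The plan is to reduce this to the universal extension property of $\F(M)$ stated just above the proposition. The key observation is that $\delta_N \circ f$ is a Banach space-valued Lipschitz map on $M$ vanishing at the base point, so the universal property directly produces the required linearisation.

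First, I would check that $g := \delta_N \circ f \colon M \to \F(N)$ lies in $\Lip_0(M, \F(N))$. Since $f(0_M) = 0_N$ and $\delta_N(0_N) = 0$, we have $g(0_M) = 0$. Moreover, since $\delta_N$ is an isometric embedding of $N$ into $\F(N)$, for any $x, y \in M$,
\[
\|g(x) - g(y)\|_{\F(N)} = \|\delta_N(f(x)) - \delta_N(f(y))\|_{\F(N)} = d_N(f(x), f(y)) \leq \Lip(f)\, d_M(x,y),
\]
so $\Lip(g) \leq \Lip(f)$.

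Next, I would apply the universal extension property (with $X = \F(N)$) to $g$, obtaining a unique continuous linear operator $\widehat{f} := \bar{g} \colon \F(M) \to \F(N)$ with $\|\widehat{f}\| = \Lip(g) \leq \Lip(f)$ and $\widehat{f} \circ \delta_M = g = \delta_N \circ f$, which is the required commutativity. Uniqueness follows from the fact that any bounded linear operator on $\F(M)$ is determined by its values on $\delta_M(M)$, which is linearly dense in $\F(M)$.

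Finally, for the equality $\|\widehat{f}\| = \Lip(f)$, I would use the isometric embedding $\delta_N$ again. For any $x \neq y$ in $M$, the commutative diagram gives
\[
d_N(f(x), f(y)) = \|\delta_N(f(x)) - \delta_N(f(y))\|_{\F(N)} = \|\widehat{f}(\delta_M(x) - \delta_M(y))\|_{\F(N)} \leq \|\widehat{f}\|\, d_M(x,y),
\]
so $\Lip(f) \leq \|\widehat{f}\|$, combining with the previous estimate to give equality. There is no substantial obstacle here; the proposition is essentially a formal corollary of the universal property, and the only point that requires a moment of care is checking that both inequalities between $\|\widehat{f}\|$ and $\Lip(f)$ hold, which is handled by the fact that $\delta_N$ is an isometry.
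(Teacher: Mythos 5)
Your proof is correct and follows exactly the route the paper intends: the proposition is presented there as an immediate consequence of the universal extension property applied to the Banach space-valued map $\delta_N \circ f$, which is precisely what you do. The only (harmless) redundancy is your separate argument for $\Lip(f)\leq\|\widehat{f}\|$: since $\delta_N$ is an isometry, your first computation already gives $\Lip(\delta_N\circ f)=\Lip(f)$ with equality, so the universal property yields $\|\widehat{f}\|=\Lip(f)$ directly.
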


Let us continue with a few more basic but important facts. For any subset $M'\subset M$ containing $0$, $\lipfree{M'}$ may be canonically identified with the closed subspace of $\lipfree{M}$ generated by the evaluation functionals on points of $M'$.
A fundamental tool, introduced in \cite{AP20} for bounded spaces and in \cite{APPP20} in the general case, is the \emph{support} of elements in Lipschitz-free spaces. Given $\mu\in \mathcal F(M)$, its \emph{support} $\supp\mu$ is the intersection of all closed subsets $K\subset M$ so that $\mu\in\mathcal F(K\cup\set{0})$.
The so-called Intersection Theorem (see~\cite{AP20,APPP20}) implies that $\mu \in \mathcal F(\supp(\mu)\cup \set{0})$.
Recall that $0$ is never an isolated point of $\supp(\mu)$ and that $\supp 0 =\emptyset$. Also, a point $p \in M$ lies in the support of $\mu$ if and only if for every neighbourhood $U$ of $p$ there
exists a function $f \in  \Lip_0
(M)$ whose support is contained in $U$ and such that
$\<f, \mu\> \neq 0$ (\cite[Proposition~2.7]{APPP20}). We will use these facts without any further reference.
Finally, the following well-known example see e.g. \cite[Example~3.11]{Weaver2}) will be crucial for us.

\begin{example}
\label{exam:IsomR}
$\F(\mathbb R) \equiv L^1(\R)$. Indeed, the linear map $\Phi : \F(\R) \to L^1(\R)$ such that 
$$ \forall x\in \mathbb R,
\quad \Phi(\delta(x)) = \left\{
\begin{array}{cc}
	\indicator{[0,x]} & \text{if } x \geq 0 \\
	-\indicator{[x,0]} & \text{if } x < 0,
\end{array}
\right.
 $$
is a surjective isometry.
\end{example}

\subsection{The Lipschitz map \texorpdfstring{$f$}{f} versus the Lipschitz operator \texorpdfstring{$\widehat{f}$}{f hat}}
\label{Prelim:LipOperator}
~

Even if the connections between $f \colon M \to N$ and $\widehat{f} \colon \F(M) \to \F(N)$ are not much explored yet, some links are well known and have been used repeatedly in the literature. 
For instance, the three first assertions below can be found in \cite{GoKa_2003} while the last one is \cite[Proposition~2.1]{ACP20}:
\begin{itemize}
	\item $f$ is bi-Lipschitz if and only if $\widehat{f}$ is an isomorphic embedding; 
	\item $f$ is a Lipschitz isomorphism (bi-Lipschitz and surjective) if and only if $\widehat{f}$ is a linear isomorphism;
	\item $f$ is a Lipschitz retraction if and only if $\widehat{f}$ is a linear projection;
	\item $f$ has dense range if and only if $\widehat{f}$ has dense range.
\end{itemize}
These assertions should be compared with \cite[Proposition~2.25]{Weaver2} where similar statements are proved for the composition operator $C_f \colon g \in \Lip_0(N) \mapsto g \circ f \in \Lip_0(M)$. Notice that, through the isometry $\F(M)^* \equiv \Lip_0(M)$, the composition operator $C_f$ is naturally identified with
the adjoint operator of $\widehat{f} \colon \F(M) \to \F(N)$. Indeed, for every $g \in \Lip_0(N)$ and $x\in M$, we have
\begin{align*}
	\< \big(\widehat{f}\;\big)^*(g) , \delta(x) \> = \< g , \widehat{f}(\delta(x)) \> = \< g , \delta(f(x)) \> = g \circ f (x) = \<C_f(g) , \delta(x) \>.
\end{align*}
Finally, Lipschitz operators which are compact are characterized in \cite{ACP21} in terms of metric properties of $f$ (see also \cite[Theorem~1.2]{Vargas1}). This program has also been addressed in other contexts. For instance,   each linear operator between Banach spaces corresponds to a lattice homeomorphism 
between the corresponding \emph{free Banach lattices}, in that case injectivity and surjectivity are always preserved as it is shown in \cite{OTTT}.   

\smallskip

Going back to the main subject  of the paper,
if $f$ is not injective, then there exist $x \neq y \in M$ with $f(x) = f(y)$, which implies that $\widehat{f}(\delta(x) - \delta(y)) = 0$. That is, $\widehat{f}$ is not injective. Furthermore, if $M$ is a metric space that is not complete, then $\F(M)$ is a Banach space that is linearly isometric to the Lipschitz-free space over the completion of $M$. This allows to provide ``trivial examples'' of injective Lipschitz maps with non-injective linearisation, such as the following one.

\begin{example}	
Let $M$ be non-complete metric space. Let $f\colon M \to N$ be a Lipschitz injection \emph{onto} some complete space $N$ such that $f(0)=0$. Let $(x_n) \subset M$ be a Cauchy sequence that does not converge in $M$. By completeness of $\Free(M)$, resp. of $N$, it is clear that $\mu:=\lim \delta(x_n) \in \Free(M)\setminus \delta(M)$ exists, resp. $\lim f(x_n) \in N$ exists. Let $x \in M$ be such that $f(x)=\lim f(x_n)$. 
By uniqueness of $\lim \hat{f}(\delta(x_n))$ we have that $\mu-\delta(x)\in \ker \hat{f}$. A concrete example of such situation is $M=[0,2\pi)$, $N=\set{z \in \C:\abs{z}=1}$ (where $0_N=1 \in \C$) and $f(x)=e^{ix}$. \end{example}

In light of the above example, from now on we will always tacitly assume that the considered metric spaces are complete.  
\medskip

\noindent \textbf{Notation.}  
Let us briefly describe the notation that will be used throughout this paper. For a Banach space $X$, we will write $B_X$ for its closed unit ball and $S_X$ for its unit sphere.
As usual, $X^*$ denotes the topological dual of $X$ and $\duality{x^\ast,x}$ will stand for the evaluation of $x^* \in X^*$ at $x \in X$. We will write $w = \sigma(X,X^*)$ for the weak topology in $X$ and $w^* = \sigma(X^*,X)$ for the weak$^*$ topology in $X^*$.

The letters $M$ and $N$ will always stand for \emph{complete} pointed metric spaces
with metric $d$ and base point $0$. 
The choice of the base point will be irrelevant to our results since, as is well known, free spaces over the same metric space but with different base points are isometrically isomorphic. 
Further, $B(p, r)$ will stand for the closed ball of radius $r$
around $p \in M$ and $\diam(A)=\sup\set{d(x,y)\colon x,y\in A}$ for the diameter of $A \subset M$.
Finally, $\Lip(M)$ stands for the space of all Lipschitz functions from $M$ to $\R$, and the Lebesgue measure on $\R$ will be denoted by $\lambda$.

\section{Preservation of supports and injectivity}
\label{section-PS}
Let us start with an easy observation. 

\begin{proposition}\label{preservsupplemma}
	Let $f \in \Lip_0(M,N)$. Then, for any $\mu \in \F(M)$,
	\[\supp\big( \widehat{f}(\mu) \big) \subset \overline{f\left(\supp(\mu)\right)}.\]
\end{proposition}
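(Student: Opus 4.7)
The plan is to exploit the Intersection Theorem together with the density definition of $\F(K\cup\set{0})$ as the closed linear span of evaluation functionals.

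First I would set $K:=\supp(\mu)$ and $L:=\overline{f(K)}\cup\set{0}$. Note that $L$ is a closed subset of $N$ containing $0$, so by the very definition of support it will suffice to show that $\widehat{f}(\mu)\in\F(L)$, for then $\supp(\widehat{f}(\mu))\subset L$, and since $0$ is never isolated in $\supp(\widehat{f}(\mu))$ we could actually drop $\set{0}$ afterwards if needed; in any case the desired inclusion $\supp(\widehat{f}(\mu))\subset\overline{f(\supp(\mu))}$ follows.

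Next I would use the Intersection Theorem recalled in Section~\ref{Prelim:Free}, which gives $\mu\in\F(K\cup\set{0})$. By the description of $\F(K\cup\set{0})$ as the closed linear span of $\set{\delta(x):x\in K\cup\set{0}}$, there is a sequence of finitely supported molecules
\[
\mu_n=\sum_{i=1}^{k_n}a_i^n\,\delta(x_i^n),\qquad x_i^n\in K\cup\set{0},
\]
with $\mu_n\to\mu$ in $\F(M)$. Applying the continuous linear operator $\widehat{f}$ and using $\widehat{f}\circ\delta_M=\delta_N\circ f$ together with $f(0)=0$, we obtain
\[
\widehat{f}(\mu_n)=\sum_{i=1}^{k_n}a_i^n\,\delta\pare{f(x_i^n)}\in\lspan\set{\delta(y):y\in f(K)\cup\set{0}}\subset\F(L).
\]
Since $\F(L)$ is closed in $\F(N)$ and $\widehat{f}(\mu_n)\to\widehat{f}(\mu)$ by continuity, we conclude $\widehat{f}(\mu)\in\F(L)$, finishing the argument.

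I do not foresee a real obstacle here; the statement is essentially a formal consequence of the universal/linearisation property combined with the characterisation of the support via the Intersection Theorem. The only two small points to keep straight are (i) adding the base point $0$ to $f(K)$ so that the target subset is genuinely a \emph{pointed} closed subspace, which is automatic since $f(0)=0$, and (ii) taking the closure of $f(K)$, which is needed because $f(K)$ itself need not be closed in $N$ even when $K=\supp(\mu)$ is closed (and compact) in $M$ is not assumed. Both points are painless.
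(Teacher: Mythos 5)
Your proof is correct and follows essentially the same route as the paper: the paper's argument also reduces, via the Intersection Theorem, to showing $\widehat{f}\big(\F(\supp(\mu)\cup\set{0})\big)\subset\F\big(\overline{f(\supp(\mu))}\cup\set{0}\big)$, which it declares ``clear from the definitions'' and which your density-of-molecules computation simply makes explicit. (One tiny remark: dropping $\set{0}$ at the end needs no appeal to $0$ not being isolated in the support --- the definition of $\supp$ as the intersection of all closed $S$ with $\widehat{f}(\mu)\in\F(S\cup\set{0})$ already yields $\supp(\widehat{f}(\mu))\subset\overline{f(\supp(\mu))}$ directly.)
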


\begin{proof}
	Let $K$ be a closed subset of $M$.
	It is clear from the definitions that 
	\begin{equation}\label{preservsupp}
	\overline{\widehat{f}(\F(K\cup\set{0}))} = \F\big(\overline{f(K)}\cup\set{0}\big).
	\end{equation}
	Now let $\mu \in \F(M)$. 
	Since $\mu \in \F(\supp(\mu)\cup\set{0})$, we have $\widehat{f}(\mu) \in \widehat{f}\big(  \F(\supp(\mu) \cup\set{0}) \big)$. 
	Equality \eqref{preservsupp} implies that $\widehat{f}(\mu) \in \F\big( \overline{f\left(\supp(\mu)\right)}\cup\set{0} \big)$, which means that
	$\supp\big( \widehat{f}(\mu) \big) \subset \overline{f\left(\supp(\mu)\right)}$.
\end{proof}

Observe that the inclusion in Proposition~\ref{preservsupplemma} is strict whenever $\widehat{f}$ is non-injective. Indeed, if $\mu \neq 0 \in \F(M)$ is such that $\widehat{f}(\mu) = 0$, then $\supp \widehat{f}(\mu) = \emptyset$ while  $f(\supp{\mu}) \neq \emptyset$. This motivates the next definition. 

\begin{definition}
\label{def:preservation-support}
We say that a Lipschitz function $f \in \Lip_0(M,N)$ \emph{preserves the support of $\mu \in \F(M)$} if 
	\[\supp(\hat{f}(\mu))=\overline{f(\supp(\mu))}.\] 
If $f$ preserves the support of every $\mu \in \F(M)$, then we say that \emph{$f$ preserves supports}. 
\end{definition}

So, the observation before Definition~\ref{def:preservation-support} may be reformulated: if $f$ preserves supports, then $\widehat{f}$ is injective.  More precisely, we have the following proposition. 

\begin{proposition} \label{PSimpliesINJ}
Let $f\in \Lip_0(M,N)$. The following are equivalent:
	\begin{enumerate}
		\item[$(i)$] $f$ preserves supports.
		\item[$(ii)$] $f$ is injective and for any $\mu, \nu\in \mathcal F(M)$ with $\supp(\mu)\subset\supp(\nu)$, we have $\supp(\hat{f}(\mu))\subset\supp(\hat{f}(\nu))$.
	\end{enumerate}
	In any case, it follows that $\hat{f}$ is injective.
\end{proposition}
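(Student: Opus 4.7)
The plan is to prove (i) $\Leftrightarrow$ (ii) in two steps, from which the injectivity of $\hat f$ drops out immediately. For (i) $\Rightarrow$ (ii), the support-monotonicity part is free: if $\supp(\mu)\subset\supp(\nu)$ then $\overline{f(\supp(\mu))}\subset\overline{f(\supp(\nu))}$, which by (i) is exactly $\supp(\hat f(\mu))\subset\supp(\hat f(\nu))$. For the injectivity of $f$ I argue by contradiction: if $f(x)=f(y)$ with $x\neq y$, then $\mu:=\delta(x)-\delta(y)$ is nonzero (since $\delta$ is an isometry) with nonempty support, yet $\hat f(\mu)=0$ has empty support, contradicting (i).

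The substance is in (ii) $\Rightarrow$ (i). Since Proposition~\ref{preservsupplemma} already gives $\supp(\hat f(\mu))\subset\overline{f(\supp(\mu))}$, I only need the reverse inclusion. The idea is to test each point of the support against a single Dirac: fix $p\in\supp(\mu)\setminus\{0_M\}$ and set $\nu:=\delta(p)$, whose support is exactly $\{p\}\subset\supp(\mu)$. The monotonicity in (ii) then yields $\supp(\hat f(\nu))\subset\supp(\hat f(\mu))$. Here I invoke the injectivity of $f$ (the other half of (ii)) to conclude $f(p)\neq f(0_M)=0_N$, so that $\supp(\hat f(\nu))=\supp(\delta(f(p)))=\{f(p)\}$, which places $f(p)$ inside $\supp(\hat f(\mu))$. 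Taking closures gives $\overline{f(\supp(\mu)\setminus\{0_M\})}\subset\supp(\hat f(\mu))$.

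The last step is to replace $\supp(\mu)\setminus\{0_M\}$ by $\supp(\mu)$ on the left. If $0_M\notin\supp(\mu)$ there is nothing to do; otherwise I invoke the recalled fact that $0_M$ is never isolated in $\supp(\mu)$, so that $0_M$ is a limit of points of $\supp(\mu)\setminus\{0_M\}$, and continuity of $f$ forces $0_N=f(0_M)$ into the closure already. Once (i) is established, injectivity of $\hat f$ is automatic: $\hat f(\mu)=0$ gives $\overline{f(\supp(\mu))}=\supp(\hat f(\mu))=\emptyset$, hence $\supp(\mu)=\emptyset$ and $\mu=0$. The main obstacle in the whole argument is the careful bookkeeping around the basepoint in (ii) $\Rightarrow$ (i), where both the injectivity of $f$ (to exclude $f(p)=0_N$ for nonzero $p$) and the non-isolation of $0_M$ in supports (to upgrade the closure) are used in essential but subtly different ways.
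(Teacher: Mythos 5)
Your proof is correct and follows essentially the same route as the paper: both arguments reduce the reverse inclusion $\overline{f(\supp\mu)}\subset\supp(\hat f(\mu))$ to testing Dirac functionals $\delta(p)$ for $p\in\supp(\mu)\setminus\{0_M\}$ against the monotonicity hypothesis, using injectivity of $f$ to guarantee $\supp(\delta(f(p)))=\{f(p)\}$ and the non-isolation of the base point in supports to handle $0_M$. The only difference is that you argue directly where the paper argues by contraposition, which is an immaterial variation.
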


\begin{proof}
	$(i) \implies (ii)$. We already explained that $\widehat{f}$ is injective whenever $f$ preserves supports. So, we only have to prove the second part of the statement. Assume that $\supp(\mu)\subset\supp(\nu)$. Then one has
	\[\supp(\hat{f}(\mu))=\overline{f(\supp(\mu))}\subset\overline{f(\supp(\nu))} = \supp(\hat{f}(\nu)).\]
	
	$(ii) \implies (i)$. 
		Let us assume that $f$ does not preserve supports and that $f$ is injective. 
		Then there exists $\mu\in \Free(M)$ and $x\in \supp(\mu)$ such that $f(x) \notin \supp(\hat{f}(\mu))$.
		We claim that $x$ can be chosen so that $x \neq 0_M$. Indeed, assume that $0_M \in \supp(\mu)$ and $0_N=f(0_M) \notin \supp(\hat{f}(\mu))$. Then $0_M$ is not isolated in $\supp(\mu)$, so there exists a sequence $(x_n)_n \subset \supp(\mu)$ such that $x_n \to 0_M$ while $x_n \neq 0_M$ for every $n \in \N$. Since $f$ is continuous, we have $f(x_n) \to 0_N$. However, $0_N \notin \supp(\hat{f}(\mu))$ and $ \supp(\hat{f}(\mu))$ is closed, so there exists $N_0 \in \N$ such that $f(x_n) \notin \supp(\hat{f}(\mu))$ whenever $n \geq N_0$. 
		Now any $x_n$ with $n \geq N_0$ satisfies $x_n \in \supp(\mu)$ and $f(x_n) \notin \supp(\hat{f}(\mu))$, which were the required properties.
	
		So, let us fix $x\in \supp(\mu) \setminus \{0\}$ with $f(x) \notin \supp(\hat{f}(\mu))$. Since $f$ is injective, we have $f(x) \neq 0$. Therefore
		\[\set{x} = \supp(\delta(x)) \subset \supp(\mu) \quad \text{but} \quad \supp(\hat{f}(\delta(x)))=\set{f(x)} \not\subset \supp(\hat{f}(\mu)).\]
		Notice that $f(x) \neq 0$ was important for the last equality to hold. Indeed, by convention, the support of $0$ is the empty set.
	
	    Finally, the last statement has already been proved before Definition~\ref{def:preservation-support}.  
\end{proof}

Now a very natural question is whether every injective Lipschitz operator \linebreak $\widehat{f} \colon  \F(M) \to \F(N)$ is such that $f \colon M \to N$ preserves supports. Our next goal is to answer this question positively, in the case when the domain space $M$ is bounded. Before going into the details of the proof, let us provide some auxiliary remarks. 
\medskip

First, as we already mentioned, the adjoint operator of $\widehat{f} \colon \F(M) \to \F(N)$ can be naturally identified with a composition operator $C_f \colon g \in \Lip_0(N) \mapsto g \circ f \in \Lip_0(M)$. 
Since a bounded operator $T\colon X\to Y$ is injective if and only if $\overline{T^*(Y^*)}^{w^*}=X^*$ (see \cite[Exercise~2.44~(i)]{FHHMZ}) we immediately get the following.

\begin{fact}\label{lemma:separating} 
Let $f\in \Lip_0(M,N)$. Then $\hat{f}$ is injective if and only if $C_f(\Lip_0(N))$ is weak*-dense in $\Lip_0(M)$ (that is, $C_f(\Lip_0(N))$ is separating for $\mathcal F(M)$).
\end{fact}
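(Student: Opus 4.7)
The plan is to read this as a direct application of a standard Banach-space duality fact, the one that the statement itself already points to via the reference to \cite[Exercise~2.44(i)]{FHHMZ}. Concretely, for any bounded operator $T\colon X\to Y$ between Banach spaces one has that $T$ is injective if and only if $\overline{T^*(Y^*)}^{w^*}=X^*$. The idea is to specialise this to $X=\F(M)$, $Y=\F(N)$, $T=\widehat{f}$, and then translate the statement across the isometric identification $\F(M)^*\equiv \Lip_0(M)$ (and likewise for $N$). The crucial input here is the calculation displayed in Section~\ref{Prelim:LipOperator}, namely $\bigl(\widehat{f}\,\bigr)^* = C_f$ under that identification. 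Plugging this in yields the equivalence: $\widehat{f}$ injective $\iff$ $C_f(\Lip_0(N))$ is weak$^\ast$-dense in $\Lip_0(M)$.

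It remains to justify the parenthetical reformulation, that is, the equivalence between weak$^\ast$-density and being separating for $\F(M)$. I would argue this via a one-line Hahn--Banach/bipolar computation: a linear subspace $S\subset X^*$ satisfies $\overline{S}^{w^*}=(S_\perp)^\perp$, where $S_\perp=\{x\in X:\varphi(x)=0\text{ for all }\varphi\in S\}$ denotes the pre-annihilator. Thus $\overline{S}^{w^*}=X^*$ if and only if $S_\perp=\{0\}$, which is precisely the statement that $S$ separates points of $X$. Applying this with $X=\F(M)$ and $S=C_f(\Lip_0(N))$ finishes the proof.

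I do not expect any technical obstacle here: the whole statement is a bookkeeping exercise matching a standard duality principle to the free-space setting, and all the identifications needed (the predual duality $\F(M)^*\equiv \Lip_0(M)$ and the relation $\widehat{f}^*=C_f$) are already established in the preliminaries. The only point that deserves to be stated explicitly, and which makes this \emph{Fact} worth recording, is that the abstract condition $\overline{\widehat{f}^*(\F(N)^*)}^{w^*}=\F(M)^*$ admits the very concrete description ``$C_f(\Lip_0(N))$ is separating for $\F(M)$'', a form that will be directly usable in the subsequent sections.
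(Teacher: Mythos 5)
Your proposal is correct and follows exactly the paper's route: the Fact is derived by specialising the standard duality criterion (injectivity of $T$ is equivalent to $\overline{T^*(Y^*)}^{w^*}=X^*$, cited from \cite[Exercise~2.44~(i)]{FHHMZ}) together with the identification $\widehat{f}^*=C_f$ from the preliminaries. Your additional bipolar argument for the parenthetical reformulation is a harmless explicit justification of a step the paper leaves implicit.
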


Next, we will also need pointwise multiplication operators on Lipschitz spaces, and their pre-adjoints. Let $\omega\in\Lip(M)$ and let $K\subset M$ contain the base point and the support of $\omega$. For $f\in\Lip_0(K)$, let $M_\omega(f)$ be the function given by
\begin{equation}
\label{eq:T_h}
M_\omega(f)(x)=\begin{cases}
f(x)\omega(x) & \text{if } x\in K \\
0 & \text{if } x\notin K.
\end{cases} \,
\end{equation}
It is proved in \cite[Lemma~2.3]{APPP20} that if $\omega$ has bounded support then $M_\omega$ defines a weak$^*$-to-weak$^*$ continuous linear operator from $\Lip_0(K)$ into $\Lip_0(M)$, and $\norm{M_\omega}\leq\norm{\omega}_\infty+\displaystyle\sup_{x\in\supp(\omega)}d(0,x)\lipnorm{\omega}$. We will denote by $W_\omega \colon \F(M) \to \F(K)$ the pre-adjoint operator of  $M_\omega$. 
In fact, we will use a multiplication operator for a very particular Lipschitz map $\omega$ which will call ``$r$-plateau''.
\begin{definition}
 Let $x \in M$ and $r >0$. We will say that $\omega \in \Lip(M)$ is a \emph{$r$-plateau at $x$} if
\begin{itemize}
    \item $\omega(M)\subset [0,1]$,
    \item $\omega(B(x,r))=\set{1}$, and 
    \item $\omega(M\setminus B(x,2r))=\set{0}$.
\end{itemize}
\end{definition}
Such a map always exists. Indeed, we may define $w \colon B(x,r) \cup  (M\setminus B(x,2r)) \to \R$ by $w\equiv 1$ on $B(x ,r)$, $w \equiv 0$ on $M\setminus B(x,2r)$. Notice that $\Lip(w) \leq \frac{1}{r}$. Then, thanks to McShane--Whitney extension’s theorem (see e.g. \cite[Theorem~1.33]{Weaver2}), $w$ can be extended to a Lipschitz map $\tilde{w} \colon M \to \R$ with the same Lipschitz constant. Finally we let $\omega$ be given by $\omega(z) = \max(0 , \min(\tilde{w}(z),1))$. It is a routine exercise to check that $\omega$ satisfies the required properties with moreover $\Lip(\omega) \leq \frac{1}{r}$. 

The next result shows that, under a technical assumption that will be frequently satisfied, the preservation of supports is equivalent to the injectivity of the linearization.
Also, this result should be compared with Proposition~\ref{prop:biLipsupp}.

\begin{theorem} \label{theo:NRps}
Let $f\in \Lip_0(M,N)$ be  such that $\widehat{f}$ is injective and let $x \in M$. Assume that the following ``non-returning at $f(x)$'' condition is satisfied:
\begin{center}
    $(\mathcal{NR}_x)$: There exist $r,\rho>0$ such that $f(M)\cap B(f(x),\rho)\subset f(B(x,r))$ (equivalently $f^{-1}(B(f(x),\rho))\subset B(x,r)$).
\end{center} 
Then $f(x)\in\supp(\hat{f}(\gamma))$ whenever $x\in \supp(\gamma)$.\\
In particular, if $(\mathcal{NR}_x)$ holds for every $x\in M\setminus\set{0}$, then $\hat{f}$ is injective if and only if $f$ preserves supports.
\end{theorem}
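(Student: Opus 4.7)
The plan is to prove the main claim by contradiction. Assume $x\in\supp(\gamma)$ and $(\mathcal{NR}_x)$ holds with constants $r,\rho>0$, yet $f(x)\notin\supp(\hat{f}(\gamma))$. First, pick $\varepsilon>0$ with $2\varepsilon<\rho$ and $\overline{B(f(x),2\varepsilon)}\cap\supp(\hat{f}(\gamma))=\emptyset$, and build an $\varepsilon$-plateau $\tilde{\omega}\in\Lip(N)$ at $f(x)$. Then pull it back to $M$ by setting $\omega:=\tilde{\omega}\circ f\in\Lip(M)$. By construction $\omega\equiv 1$ on the open set $U:=f^{-1}(B(f(x),\varepsilon))\ni x$, and by $(\mathcal{NR}_x)$ its support lies inside $f^{-1}(\overline{B(f(x),2\varepsilon)})\subset \overline{B(x,r)}$, which is bounded. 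This is precisely where the non-returning hypothesis enters: without it the pulled-back plateau might have unbounded support, invalidating the use of the multiplication operator machinery.

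Setting $K_M=\overline{B(x,r)}\cup\{0\}$ and $K_N=\overline{B(f(x),2\varepsilon)}\cup\{0\}$, consider the pre-adjoint multiplication operators $W_\omega:\F(M)\to\F(K_M)$ and $W_{\tilde{\omega}}:\F(N)\to\F(K_N)$. For any $\phi\in\Lip_0(K_N)$ the function $M_{\tilde{\omega}}(\phi)$ has support inside $\supp(\tilde{\omega})\subset\overline{B(f(x),2\varepsilon)}$, which is disjoint from $\supp(\hat{f}(\gamma))$; consequently $\langle M_{\tilde{\omega}}\phi,\hat{f}(\gamma)\rangle=0$ and hence $W_{\tilde{\omega}}(\hat{f}(\gamma))=0$. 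Then for every $\psi\in\Lip_0(N)$ the key commutation identity
\[\langle\psi,\hat{f}(W_\omega\gamma)\rangle=\langle(\psi\circ f)\cdot\omega,\gamma\rangle=\langle(\psi\tilde{\omega})\circ f,\gamma\rangle=\langle\psi|_{K_N},W_{\tilde{\omega}}(\hat{f}(\gamma))\rangle=0\]
yields $\hat{f}(W_\omega\gamma)=0$. Since $\hat{f}$ is injective, $W_\omega\gamma=0$.

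To reach the contradiction, use $x\in\supp(\gamma)$ to find $\varphi\in\Lip_0(M)$ with $\supp(\varphi)\subset U$ and $\langle\varphi,\gamma\rangle\neq 0$. Since $\omega\equiv 1$ on $U$ and $\varphi\equiv 0$ outside $U$, one has $\varphi\omega=\varphi$ pointwise on $M$, so $\langle\varphi|_{K_M},W_\omega\gamma\rangle=\langle\varphi\omega,\gamma\rangle=\langle\varphi,\gamma\rangle\neq 0$, contradicting $W_\omega\gamma=0$. Hence $f(x)\in\supp(\hat{f}(\gamma))$, which proves the main claim.

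For the ``in particular'' statement, the implication from preservation of supports to injectivity of $\hat{f}$ is Proposition~\ref{PSimpliesINJ}. Conversely, assume $\hat{f}$ is injective and $(\mathcal{NR}_x)$ holds for every $x\neq 0$. The main claim gives $f(\supp(\gamma)\setminus\{0\})\subset\supp(\hat{f}(\gamma))$ for every $\gamma\in\F(M)$; if in addition $0\in\supp(\gamma)$, then $0$ is not isolated there, so picking $x_n\in\supp(\gamma)\setminus\{0\}$ with $x_n\to 0$ and using continuity of $f$ together with closedness of $\supp(\hat{f}(\gamma))$ yields $0=f(0)\in\supp(\hat{f}(\gamma))$. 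Combined with Proposition~\ref{preservsupplemma} this gives $\supp(\hat{f}(\gamma))=\overline{f(\supp(\gamma))}$ for every $\gamma$. The main obstacle, as noted, is the construction and control of the pulled-back plateau $\omega$; the non-returning condition is exactly what is needed to keep its support bounded so that $W_\omega$ is well defined and the commutation identity above is available.
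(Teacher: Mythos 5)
Your proof is correct, and although it uses the same basic toolkit as the paper (a plateau function and the weighting operators $M_\omega$ with pre-adjoints $W_\omega$ from \cite[Lemma~2.3]{APPP20}), the injectivity hypothesis enters in a genuinely different way. The paper reformulates injectivity of $\hat{f}$ as weak$^*$-density of $C_f(\Lip_0(N))$ in $\Lip_0(M)$ (Fact~\ref{lemma:separating}), checks by hand that $M_{\omega\circ f}$ is a bounded weak$^*$-continuous endomorphism of $\Lip_0(M)$ (via Banach--Dieudonn\'e), and then approximates an arbitrary test function $g$ supported near $x$ by a net $h_\alpha\circ f$, passing to the limit through the intertwining $M_{\omega\circ f}(h\circ f)=M_\omega(h)\circ f$ to get $\langle g,\gamma\rangle=0$. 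You work on the predual side instead: you localize $\gamma$ to $W_\omega\gamma\in\F(K_M)$, use the same intertwining (read as $\langle\psi,\hat{f}(W_\omega\gamma)\rangle=\langle\psi\restricted_{K_N},W_{\tilde{\omega}}(\hat{f}(\gamma))\rangle$) to show $\hat{f}(W_\omega\gamma)=0$, and then apply injectivity of $\hat{f}$ directly to the single element $W_\omega\gamma$. This avoids the net approximation and the ad hoc boundedness verification, delegating everything to the already-established case of weights with bounded support --- which is precisely where $(\mathcal{NR}_x)$ is needed, as you correctly identify. Two cosmetic points: with the paper's convention that $B(p,r)$ is the \emph{closed} ball, your $U=f^{-1}(B(f(x),\varepsilon))$ need not be open, but it contains the open set $f^{-1}(\set{y: d(y,f(x))<\varepsilon})\ni x$, which is all that the support criterion of \cite[Proposition~2.7]{APPP20} requires; and the passage to the ``equivalent'' form $f^{-1}(B(f(x),\rho))\subset B(x,r)$ uses injectivity of $f$, which indeed follows from that of $\hat{f}$.
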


\begin{proof}
We will argue by contradiction. 
Suppose $\gamma \in \F(M)$ is such that $x\in \supp(\gamma)$ but $f(x) \notin \supp(\widehat{f}(\gamma))$. 
We may assume that $\rho$ is small enough so that $B(f(x),\rho) \cap \supp(\widehat{f}(\gamma))=\emptyset$.
Take $0<r'<\min\set{r,\rho/(2\norm{f}_L)}$, note that $f(B(x,r'))\subset B(f(x),\rho/2)$. By injectivity of $f$ and condition ($\mathcal{NR}_x$), we also have $f(M\setminus B(x,r))\subset N \setminus B(f(x),\rho)$.
Let $\omega \in \Lip_0(N)$ be a $\rho/2$-plateau at $f(x)$. 
We claim that $M_{\omega\circ f}:\Lip_0(M) \to \Lip_0(M)$ is a bounded operator.
Indeed, let $g\in \Lip_0(M)$ and $a,b \in M$. 
If $a,b \in M\setminus B(x,r)$, then $\omega(f(a))=\omega(f(b))=0$ and 
$|M_{\omega \circ f}(g)(a) - M_{\omega \circ f}(g)(b)| = \abs{\omega(f(a))g(a)-\omega(f(b))g(b)} =0$.
Assume now without loss of generality that $a \in B(x,r)$. 
Then 
\[
\begin{aligned}
\abs{\omega(f(a))g(a)-\omega(f(b))g(b)}&\leq \abs{\omega(f(a))-\omega(f(b))}\abs{g(a)}+\abs{\omega(f(b))}\abs{g(a)-g(b)}\\
&\leq \norm{\omega\circ f}_L d(a,b) \norm{g}_Ld(0,a)+1\cdot\norm{g}_L d(a,b)\\
&\leq C \norm{g}_L d(a,b)
\end{aligned}
\]
for some suitable constant $C>0$
since $d(0,a)\leq d(0,x)+r$.
Now, using a standard argument involving the Banach-Dieudonn\'e theorem (see \cite[Lemma~2.3]{APPP20}), we obtain that $M_{\omega\circ f}$ admits a pre-adjoint $W$. 
Moreover it is easily checked that $M_{\omega\circ f}(\tilde{h}\circ f)= M_{\omega}(\tilde{h})\circ f$ for every $\tilde{h}\in \Lip(N)$. 
We further have that $\omega \circ f \equiv 1$ on $B\big(x,r'\big)$.
For an arbitrary $g\in \Lip_0(M)$ such that $\supp(g) \subset B\big(x,r')$, we appeal to Fact~\ref{lemma:separating} to get a net $(h_\alpha) \subset \Lip_0(N)$ such that $h_\alpha\circ f \to g$ weakly$^*$. We have that 
    \begin{align*}
        \< g , \gamma \> = \< M_{\omega \circ f}(g) , \gamma \> = \lim\limits_{\alpha} \< M_{\omega \circ f}(h_\alpha \circ f) , \gamma \> = \lim\limits_{\alpha} \< M_{\omega }(h_\alpha) \circ f , \gamma \> .
    \end{align*}
    We conclude by noticing that $M_\omega(h_\alpha) \in \Lip_0(N)$ is such that $\supp(M_\omega(h_\alpha)) \subset B(f(x) , \rho)$, which implies that $\lim\limits_{\alpha} \< M_{\omega }(h_\alpha) \circ f , \gamma \> = \lim\limits_{\alpha} \< M_{\omega }(h_\alpha), \widehat{f}(\gamma) \> = 0$.
    Since $g$ was arbitrary, this shows that $x \notin \supp(\gamma)$.
\end{proof}

We are now in good position to prove that an injective Lipschitz operator preserves the support of every element with bounded support.

\begin{corollary} \label{cor:injEQps}
 Let $f \in \Lip_0(M,N)$ be such that $\hat{f}$ is injective.
 Then $f$ preserves the support of every $\gamma \in \Free(M)$ such that $\supp(\gamma)$ is bounded.\\
 In particular, if $M$ is bounded then $\hat{f}$ is injective if and only if $f$ preserves supports.
\end{corollary}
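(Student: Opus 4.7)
The ``in particular'' is immediate: if $M$ is bounded, every $\gamma\in \F(M)$ has bounded support, so the first assertion gives that $f$ preserves supports, while the reverse implication is Proposition~\ref{PSimpliesINJ}. For the main assertion, Proposition~\ref{preservsupplemma} already yields $\supp(\hat{f}(\gamma)) \subset \overline{f(\supp(\gamma))}$, so it suffices to show that $f(x) \in \supp(\hat{f}(\gamma))$ whenever $x \in \supp(\gamma)$; as in the proof of Proposition~\ref{PSimpliesINJ}, we may assume $x \neq 0_M$, and then $f(x) \neq 0_N$ by injectivity of $f$ (which follows from injectivity of $\hat{f}$).

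The plan is to replay the argument of Theorem~\ref{theo:NRps}, replacing the role of the non-returning hypothesis by a cutoff that exploits the boundedness of $\supp(\gamma)$. Arguing by contradiction, assume $f(x) \notin \supp(\hat{f}(\gamma))$ and pick $\rho>0$ small enough that $B(f(x),\rho)$ is disjoint from $\supp(\hat{f}(\gamma)) \cup \set{0_N}$; let $\omega \in \Lip(N)$ be a $\rho/2$-plateau at $f(x)$. Without $(\mathcal{NR}_x)$, the function $\omega \circ f$ need not be compactly supported in $M$, so we introduce a cutoff $\psi \in \Lip(M)$, chosen as an $(R+1)$-plateau at $0_M$ for some $R>0$ with $\supp(\gamma)\cup\set{0_M}\subset B(0_M,R)$. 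Then $(\omega\circ f)\cdot \psi$ is a Lipschitz function on $M$ with bounded support (inside $B(0_M,2(R+1))$), so $M_{(\omega\circ f)\cdot\psi} \colon \Lip_0(M)\to\Lip_0(M)$ is bounded and weak$^*$-to-weak$^*$ continuous by \cite[Lemma~2.3]{APPP20}.

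For $r'>0$ sufficiently small one has both $f(B(x,r'))\subset B(f(x),\rho/2)$ and $B(x,r') \subset B(0_M,R+1)$, so $(\omega\circ f)\cdot\psi \equiv 1$ on $B(x,r')$ and consequently $M_{(\omega\circ f)\cdot\psi}(g)=g$ for every $g\in\Lip_0(M)$ supported in $B(x,r')$. Fixing such a $g$, choose by Fact~\ref{lemma:separating} a net $(h_\alpha)\subset\Lip_0(N)$ with $h_\alpha\circ f \to g$ weak$^*$; a direct computation gives $M_{(\omega\circ f)\cdot\psi}(h_\alpha\circ f) = \psi\cdot (M_\omega(h_\alpha)\circ f)$, and weak$^*$-continuity yields $\psi\cdot (M_\omega(h_\alpha)\circ f) \to g$ weak$^*$. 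Pairing with $\gamma$ and using that $\psi\equiv 1$ on $\supp(\gamma)\cup\set{0_M}$, so that $\duality{\psi\cdot\phi,\gamma}=\duality{\phi,\gamma}$ for every $\phi\in\Lip_0(M)$ (because $\gamma\in\F(\supp(\gamma)\cup\set{0_M})$), we obtain
\[ \duality{g,\gamma} \;=\; \lim_\alpha \duality{M_\omega(h_\alpha)\circ f,\gamma} \;=\; \lim_\alpha \duality{M_\omega(h_\alpha),\hat{f}(\gamma)} \;=\; 0, \]
where the last equality holds since each $M_\omega(h_\alpha)$ vanishes outside $B(f(x),\rho)$, hence on $\supp(\hat{f}(\gamma))\cup\set{0_N}$. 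This forces $\duality{g,\gamma}=0$ for every $g$ supported in $B(x,r')$, contradicting $x\in\supp(\gamma)$. The main obstacle is precisely the introduction of $\psi$: it must simultaneously ensure that $M_{(\omega\circ f)\cdot\psi}$ is a bona fide weak$^*$-continuous multiplication operator \emph{and} become invisible when pairing with $\gamma$; it is exactly the bounded support of $\gamma$ that reconciles these two requirements.
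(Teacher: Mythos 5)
Your proof is correct, but it follows a genuinely different route from the paper's. The paper disposes of this corollary in two lines: it restricts $f$ to the bounded set $\supp(\gamma)\cup\set{0}$, observes that this restriction automatically satisfies the non-returning condition $(\mathcal{NR}_x)$ at every $x\in\supp(\gamma)$ (take $r$ larger than $\diam(\supp(\gamma)\cup\set{0})$, so that $B(x,r)$ contains the whole domain and the required inclusion is vacuous), and then applies Theorem~\ref{theo:NRps} as a black box, using that $\widehat{f}(\gamma)$ coincides with the image of $\gamma$ under the linearisation of the restricted map via the canonical identification of $\F(\supp(\gamma)\cup\set{0})$ with a subspace of $\F(M)$. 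You instead replay the internal argument of Theorem~\ref{theo:NRps} on all of $M$, replacing the non-returning hypothesis by the cutoff $\psi$; the three points that make this work --- that $(\omega\circ f)\cdot\psi$ has bounded support, so $M_{(\omega\circ f)\cdot\psi}$ is a weak$^*$-to-weak$^*$ continuous multiplier acting as the identity on functions supported in $B(x,r')$; that $M_{(\omega\circ f)\cdot\psi}(h_\alpha\circ f)=\psi\cdot(M_\omega(h_\alpha)\circ f)$; and that $\psi$ is invisible when paired against $\gamma$ because $\gamma\in\F(\supp(\gamma)\cup\set{0})$ and $\psi\equiv 1$ there --- are all verified correctly, and the preliminary reductions ($x\neq 0_M$, $f(x)\neq 0_N$, $B(f(x),\rho)$ disjoint from $\supp(\widehat{f}(\gamma))\cup\set{0_N}$) are sound. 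The paper's reduction is shorter and reuses existing machinery wholesale; your direct version is longer but isolates exactly where the boundedness of $\supp(\gamma)$ enters, namely in reconciling the bounded support of the multiplier (needed for weak$^*$ continuity) with its transparency to $\gamma$. Both arguments are valid.
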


\begin{proof}
Let $\gamma\in \Free(M)$ be an element with bounded support. Then, the map $g=f\restricted_{\supp(\gamma)\cup \set{0}}$ satisfies the non-returning condition at $f(x)$ for every $x\in \supp(\gamma)$, so 
\[\supp(\widehat{f}(\gamma))=\supp(\widehat{g}(\gamma))=\overline{g(\supp(\gamma))}=\overline{f(\supp(\gamma))}\]
by Theorem~\ref{theo:NRps}.
\end{proof}

We do not know if the last statement in Corollary~\ref{cor:injEQps} holds when $M$ is unbounded. Still, we obtain it holds for real-valued functions defined on a connected locally connected space.

\begin{lemma}\label{lemma:connectedNRpointwise}
 Assume that $M$ is connected  and let $x\in M$ admit a neighborhood basis made of connected sets. Let $f\in \Lip_0(M,\R)$ be injective. Then for every $r>0$ there exists $\rho>0$ such that $f(M) \cap B(f(x),\rho) \subset f(B(x,r))$.
\end{lemma}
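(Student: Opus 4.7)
The plan is to exploit that $f$, being continuous on the connected space $M$ with values in $\R$, sends any connected set in $M$ to an interval. So given $r > 0$, I would first use the hypothesis to pick a connected open neighborhood $U$ of $x$ contained in $B(x, r)$, then set $a = \inf f(U)$ and $b = \sup f(U)$, so that $f(U)$ is an interval with $a \leq f(x) \leq b$. The case $M = \set{x}$ is trivial, so I may assume otherwise; then the connected $M$ has no isolated points, so $U$ contains points besides $x$, and injectivity of $f$ forces $a < b$.

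In the generic case $a < f(x) < b$, setting $\rho = \tfrac{1}{2}\min\set{f(x)-a,\, b-f(x)} > 0$ immediately yields
\[
  B(f(x), \rho) \subset (a, b) \subset f(U) \subset f(B(x, r)),
\]
which suffices. The main obstacle is the remaining case where $f(x)$ is an endpoint of $f(U)$, say $a = f(x)$ (the case $b = f(x)$ is symmetric).

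The key step there is to prove, via a connectedness argument, that $f(M) \subset [f(x), +\infty)$. I would consider the partition $M = A \cup B$ with $A = f^{-1}((-\infty, f(x)))$ and $B = f^{-1}([f(x), +\infty))$. Continuity of $f$ makes $A$ open; to see that $B$ is open too, note that any $y \in B$ with $f(y) > f(x)$ is interior to $B$ by continuity, while $x$ is interior to $B$ because the case assumption $f(U) \subset [f(x), +\infty)$ gives $U \subset B$. Connectedness of $M$ together with $x \in B$ then forces $A = \emptyset$, so $f(M) \subset [f(x), +\infty)$. Finally, choosing $\rho = (b - f(x))/2 > 0$ gives
\[
  B(f(x), \rho) \cap f(M) \subset [f(x), f(x) + \rho] \subset [f(x), b) \subset f(U) \subset f(B(x, r)),
\]
as desired.
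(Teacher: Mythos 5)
Your proof is correct and follows essentially the same route as the paper's: reduce to the interval $f(U)$ for a connected neighbourhood $U\subset B(x,r)$, handle the interior case directly, and in the endpoint case use connectedness plus injectivity to show $f(x)$ is a global extremum of $f$ on $M$ (the paper runs this last step by showing $f^{-1}((-\infty,f(x)))$ would be open but not closed and deriving a contradiction from a boundary point, whereas you exhibit a clopen partition, but these are the same argument in different clothing). One cosmetic point: the hypothesis only provides connected neighbourhoods, not necessarily open ones, but your argument never actually needs $U$ open, only that $x$ is interior to $U$.
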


\begin{proof}
Let $x\in M$ and $r>0$. Take a connected neighbourhood $U\subset B(x,r)$ of $x$. Then $I:=f(U)$ is an interval in $\mathbb R$.
If $f(x)$ lies in the interior of $I$, then we may find $\rho>0$ with $B(f(x),\rho)\subset I$ and we are done. Otherwise, $f(x)$ is an extreme point of $I$, let's assume for instance that $f(x)=\min I$. Now we claim that $f(x)=\min f(M)$. 
Indeed, consider $V=f^{-1}((-\infty, f(x)))$ and assume $V\neq \emptyset$. Since $V$ is open and since $M$ is connected, $V$  is not closed, and so there is a sequence $y_n\in V$ with $y_n\to y\in M\setminus V$. Then $f(y_n)<f(x)\leq f(y)$ and so $f(x)=f(y)$, which yields $y=x$ by injectivity. Then $y_n\in U$ eventually, so $f(y_n)\geq f(x)$, a contradiction. Now that we know that $f(x)=\min f(M)$, any $\rho>0$ with $[f(x),f(x)+\rho)\subset I$ will do the work.    
\end{proof}

As a direct consequence of Lemma~\ref{lemma:connectedNRpointwise} and Theorem~\ref{theo:NRps} we get the following. 

\begin{corollary}\label{cor:connected}  
Assume that $M$ is connected and locally connected and let $f \in \Lip_0(M,\R)$. 
Then $\hat{f}$ is injective if and only if $f$ preserves supports. 
\end{corollary}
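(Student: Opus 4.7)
The statement is a near-immediate consequence of assembling Proposition~\ref{PSimpliesINJ}, Lemma~\ref{lemma:connectedNRpointwise}, and Theorem~\ref{theo:NRps}, so my plan is simply to check that the hypotheses of each line up. The implication ``$f$ preserves supports $\Rightarrow$ $\hat{f}$ injective'' is the content of Proposition~\ref{PSimpliesINJ} and needs no further argument. Hence all the work is in the converse.

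For the converse, assume $\hat{f}$ is injective. I first record that $f$ is then automatically injective: this was observed in Section~\ref{Prelim:LipOperator}, where it is noted that a coincidence $f(x)=f(y)$ with $x\neq y$ immediately produces a nonzero element $\delta(x)-\delta(y)$ in $\ker\hat{f}$. This injectivity is exactly the hypothesis needed to apply Lemma~\ref{lemma:connectedNRpointwise}.

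The next step is to upgrade Lemma~\ref{lemma:connectedNRpointwise} from a pointwise to a global statement. The lemma applies at any point admitting a neighbourhood basis of connected sets, and by the standing assumption that $M$ is locally connected, every $x\in M$ has such a basis (connected open neighbourhoods are connected sets). Combined with the global hypothesis that $M$ is connected and the injectivity of $f$ obtained in the previous step, the lemma yields that the non-returning condition $(\mathcal{NR}_x)$ holds at every $x\in M$, in particular at every $x\in M\setminus\set{0}$.

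With $(\mathcal{NR}_x)$ verified on $M\setminus\set{0}$, the last clause of Theorem~\ref{theo:NRps} applies directly and gives that $\hat{f}$ injective is equivalent to $f$ preserving supports, finishing the proof. I do not foresee any genuine obstacle: the only conceptual point to verify is that local connectedness supplies the neighbourhood basis required by Lemma~\ref{lemma:connectedNRpointwise}, and the base point $0$ causes no trouble since Theorem~\ref{theo:NRps} only requires the non-returning condition away from $0$ (and in any case local connectedness holds at $0$ as well).
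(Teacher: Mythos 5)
Your proposal is correct and follows exactly the paper's intended argument: the paper states the corollary as a direct consequence of Lemma~\ref{lemma:connectedNRpointwise} (whose neighbourhood-basis hypothesis is supplied by local connectedness) and the last clause of Theorem~\ref{theo:NRps}, with the easy direction coming from Proposition~\ref{PSimpliesINJ}. You merely spell out the details (in particular that $\hat{f}$ injective forces $f$ injective, which the lemma needs), which is exactly the right bookkeeping.
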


We conclude the section with the next lemma which we use in the sequel.

\begin{lemma}\label{lemma:compositionsupp} Let $f\in \Lip_0(M,N)$ and $g\in \Lip_0(N,L)$. 
	\begin{enumerate}
		\item[(a)] If $f$ and $g$ preserve supports, then $g\circ f$ preserves supports.
		\item[(b)] If $g$ is closed and injective and $g\circ f$ preserves supports, then $f$ preserves supports.
		\item[(c)] If $f$ is closed, $\hat{f}$ is onto and $g\circ f$ preserves supports, then $g$ preserves supports. 
	\end{enumerate}
\end{lemma}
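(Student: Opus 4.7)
The key observation for all three parts is that $\widehat{g\circ f}=\hat{g}\circ\hat{f}$, together with Proposition~\ref{preservsupplemma}, which always gives the inclusion $\supp(\hat{h}(\mu))\subset\overline{h(\supp(\mu))}$. So the game in each case is to produce the reverse inclusion from the hypotheses.

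\textbf{Part (a).} For any $\mu\in\F(M)$, I would compute
\[\supp(\widehat{g\circ f}(\mu))=\supp(\hat{g}(\hat{f}(\mu)))=\overline{g(\supp(\hat{f}(\mu)))}=\overline{g(\overline{f(\supp(\mu))})},\]
using preservation of supports twice. Then I would invoke the topological fact that for a continuous map $g$, $\overline{g(\overline{A})}=\overline{g(A)}$ (inclusion one way is continuity $g(\overline A)\subset\overline{g(A)}$, the other is trivial), to conclude the right-hand side equals $\overline{(g\circ f)(\supp(\mu))}$.

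\textbf{Part (b).} Fix $\mu\in\F(M)$. By Proposition~\ref{preservsupplemma} I already have $\supp(\hat{f}(\mu))\subset\overline{f(\supp(\mu))}$; I need the reverse inclusion. Again by Proposition~\ref{preservsupplemma} applied to $g$,
\[\supp(\hat{g}(\hat{f}(\mu)))\subset\overline{g(\supp(\hat{f}(\mu)))}=g(\supp(\hat{f}(\mu))),\]
where the equality uses that $g$ is a closed map (and supports are closed). Since $g\circ f$ preserves supports, the left-hand side equals $\overline{g(f(\supp(\mu)))}\supset g(f(\supp(\mu)))$. Therefore $g(f(\supp(\mu)))\subset g(\supp(\hat{f}(\mu)))$, and injectivity of $g$ yields $f(\supp(\mu))\subset\supp(\hat{f}(\mu))$. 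Taking closures (the right-hand side is already closed) gives the desired inclusion.

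\textbf{Part (c).} Given $\nu\in\F(N)$, surjectivity of $\hat{f}$ provides $\mu\in\F(M)$ with $\hat{f}(\mu)=\nu$, hence $\hat{g}(\nu)=\widehat{g\circ f}(\mu)$. Since $g\circ f$ preserves supports,
\[\supp(\hat{g}(\nu))=\overline{(g\circ f)(\supp(\mu))}.\]
Now $f$ being a closed map forces $f(\supp(\mu))$ to be closed in $N$, so Proposition~\ref{preservsupplemma} upgrades to $\supp(\nu)=\supp(\hat{f}(\mu))\subset f(\supp(\mu))$. Applying $g$ and taking closures yields $\overline{g(\supp(\nu))}\subset\overline{(g\circ f)(\supp(\mu))}=\supp(\hat{g}(\nu))$, the inclusion converse to Proposition~\ref{preservsupplemma}.

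\textbf{Main obstacle.} The only subtle point is the use of $g$ being closed in (b) and $f$ being closed in (c): without these hypotheses one only controls closures of images, and the argument for (b) would not allow the cancellation of $g$ (nor in (c) the identification of $\supp(\nu)$ with a subset of $f(\supp(\mu))$ rather than its closure). Apart from this bookkeeping of closed vs.\ open images, the rest is just tracking inclusions through the identity $\widehat{g\circ f}=\hat{g}\circ\hat{f}$.
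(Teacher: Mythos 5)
Your proof is correct and follows essentially the same route as the paper's: in each part you exploit $\widehat{g\circ f}=\hat g\circ\hat f$, the one-sided inclusion of Proposition~\ref{preservsupplemma}, and the closedness hypotheses exactly where the paper uses them (to replace $\overline{g(\supp\hat f(\mu))}$ by $g(\supp\hat f(\mu))$ in (b), and $\overline{f(\supp\mu)}$ by $f(\supp\mu)$ in (c)). The only cosmetic difference is that in (a) you phrase the argument as a chain of equalities plus the fact $\overline{g(\overline A)}=\overline{g(A)}$, while the paper proves the nontrivial inclusion directly; these are equivalent.
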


\begin{proof}
	(a) Let $\mu\in \mathcal F(M)$. Then 
	\[(g\circ f)(\supp\mu)= g(f(\supp\mu))\subset g(\supp(\hat{f}(\mu)))\subset \supp(\hat{g}(\hat{f}(\mu)))= \supp(\hat{g\circ f}(\mu)).\]
	
 (b) Let $\mu\in \mathcal F(M)$. By hypothesis, 
	\begin{align*}(g\circ f)(\supp\mu)\subset  \supp(\widehat{g\circ f}(\mu))= \supp(\hat{g} \circ\hat{f}(\mu))\subset \overline{g(\supp \hat{f}(\mu))}= g(\supp \hat{f}(\mu)).
	\end{align*}
	Since $g$ is injective, it follows that $f(\supp(\mu))\subset \supp\hat{f}(\mu)$, and taking closures yields the conclusion.
	\medskip
	
 (c) Let $\gamma\in \mathcal F(N)$ and take $\mu\in \mathcal F(M)$ with $\hat{f}(\mu)=\gamma$. Then $ g(\supp(\gamma))= g(\supp(\hat{f}(\mu))$ and
	\[ g(\supp(\hat{f}(\mu))\subset g\big(\overline{f(\supp\mu)}\big)=g(f(\supp\mu))\subset \supp(\widehat{g\circ f}(\mu))=\supp(\hat{g}(\gamma)).\]
\end{proof}

\begin{remark}
Both the statement and the proof of Lemma~\ref{lemma:compositionsupp} may be improved when $M$ is bounded. Indeed, 
since a Lipschitz map preserves supports if and only if it has injective linearization, we can prove only the corresponding statements for this latter property. Therefore (a) becomes trivial. Assertion (b) readily follows from the fact that $\widehat{g\circ f}=\hat{g}\circ \hat{f}$ is injective only if $\hat{f}$ is injective. Notice we do not need $g$ to be closed, and not even injective for ``$\implies$''. 
Again, since $\widehat{g\circ f}=\hat{g}\circ \hat{f}$ and since $\hat{f}$ is onto, (c) is easy, and still no closedness assumption is needed.
\end{remark}

\section{Sufficient conditions for injectivity}
\label{section3}

In this section, we will provide some metric conditions on $f$ which ensure that $\widehat{f}$ is injective. 

Recall that if $X$ is a Banach space, then we say that a subspace $S\subset X^*$ is \emph{norming} if there exists $C\geq 1$ such that, for every $x \in X$, $\|x\| \leq C \sup_{x^* \in B_S} |x^*(x)|$. Of course, it is clear that if $S$ is norming then $S$ is separating for $X$. In particular,  thanks to Fact~\ref{lemma:separating}, we obtain:

\begin{fact}
Let $f\in \Lip_0(M,N)$. If $C_f(\Lip_0(N))$ is norming for $\F(M)$ then $\hat{f}$ is injective.
\end{fact}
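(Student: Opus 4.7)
The plan is to invoke Fact~\ref{lemma:separating} directly: it tells us that $\widehat{f}$ is injective precisely when $C_f(\Lip_0(N))$ is separating for $\F(M)$ (equivalently, weak$^*$-dense in $\Lip_0(M)$). So all I need to do is verify the elementary general principle that any norming subspace of a dual is automatically separating.

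Concretely, I would denote $S := C_f(\Lip_0(N)) \subset \Lip_0(M) \equiv \F(M)^*$ and take a constant $C \geq 1$ witnessing the norming property, so that
\[
\|\mu\| \leq C \sup_{g \in B_S} |\langle g, \mu \rangle| \qquad \text{for every } \mu \in \F(M).
\]
If $\mu \in \F(M)$ is nonzero, then the left-hand side is strictly positive, which forces some $g \in B_S$ to satisfy $\langle g, \mu \rangle \neq 0$. Hence $S$ separates points of $\F(M)$, and Fact~\ref{lemma:separating} delivers the injectivity of $\widehat{f}$.

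There is really no obstacle here: the statement is essentially a one-liner built on top of the definition of ``norming'' together with the previously recorded equivalence between injectivity of $\widehat{f}$ and the separating property of $C_f(\Lip_0(N))$. A more substantive companion question that this Fact hints at — whether a quantitative norming constant translates into $\widehat{f}$ being an isomorphic embedding — would require genuine extra work, but is not part of the claim to be proved.
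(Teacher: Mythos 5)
Your proof is correct and follows exactly the paper's route: the Fact is stated right after the observation that a norming subspace is separating, combined with Fact~\ref{lemma:separating}. Your elementary verification that norming implies separating is precisely the intended one-liner.
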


Let us point out that it follows from \cite[Proposition~3.4]{Kalton04} that a subspace $S$ of $\Lip_0(M)$ which is also a sublattice, such as $C_f(\Lip_0(N))$,  is norming if and only if it \emph{separates the points of $M$ uniformly}, meaning that \[\exists C \geq 1, \; \forall x \neq y \in M, \; \exists f \in CB_S, \quad |f(x) - f(y)| = d(x,y).  \]

\subsection{The case of bi-Lipschitz maps}

Let us recall that $f \colon M \to N$ is bi-Lipschitz if there exist $a,b > 0$ such that 
\[\forall x,y \in M,  \quad a\ d(x,y) \leq d(f(x), f(y)) \leq b \ d(x,y).\]
The next result is already known. 

\begin{proposition}\label{fchapinjbilip} Let $f\in \Lip_0(M,N)$. The following are equivalent:
	\begin{enumerate}[$(i)$]
		\item $f$ is bi-Lipschitz.
		\item $\widehat{f}$ is injective with closed range. 
		\item $C_f$ is onto. 
	\end{enumerate}
	In any case, $C_f(\Lip_0(N))$ is norming for $\F(M)$  and $f$ preserves supports. 
\end{proposition}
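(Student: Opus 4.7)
The plan is to obtain the three equivalences via standard Banach space duality combined with the functorial behavior of the free-space construction recalled in Subsection~\ref{Prelim:LipOperator}, and then deduce the two ``in any case'' assertions almost immediately.

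For $(i) \Leftrightarrow (ii)$, I would invoke one of the bulleted facts already recalled in Subsection~\ref{Prelim:LipOperator}: $f$ is bi-Lipschitz if and only if $\widehat{f}$ is an isomorphic embedding. Since being an isomorphic embedding of Banach spaces is precisely the same as being injective with closed range (equivalently, bounded below), this delivers $(i) \Leftrightarrow (ii)$ directly.

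For $(ii) \Leftrightarrow (iii)$, I would use the identification $C_f = (\widehat{f}\,)^*$ that is explicitly computed in Subsection~\ref{Prelim:LipOperator}, together with the classical duality result (a form of the Banach closed-range theorem, see for instance \cite[Exercise~2.44]{FHHMZ}) stating that a bounded linear operator $T \colon X \to Y$ between Banach spaces is bounded below if and only if $T^*$ is surjective. Applied to $T = \widehat{f}$, this yields the desired equivalence.

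Finally, for the two ``in any case'' statements: from $(iii)$ we get $C_f(\Lip_0(N)) = \Lip_0(M) = \F(M)^*$, which is tautologically $1$-norming for $\F(M)$. To see that $f$ preserves supports, the lower bi-Lipschitz bound $d(f(x), f(y)) \geq a\, d(x,y)$ implies $f^{-1}(B(f(x), ar)) \subset B(x, r)$ for every $x \in M$ and every $r > 0$, so the non-returning condition $(\mathcal{NR}_x)$ of Theorem~\ref{theo:NRps} is satisfied at every $x \in M$; combined with the injectivity of $\widehat{f}$ given by $(ii)$, Theorem~\ref{theo:NRps} then implies that $f$ preserves supports. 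I do not foresee any genuine obstacle here — every step rests on a well-documented general fact, and the only delicate point is correctly invoking the duality theorem used for $(ii) \Leftrightarrow (iii)$.
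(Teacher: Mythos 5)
Your proposal is correct and follows essentially the same route as the paper: both arguments reduce the equivalences to previously recalled facts (the bi-Lipschitz characterisation from the preliminaries, respectively Weaver's Proposition~2.25) plus the standard duality statement that $T$ is an into isomorphism if and only if $T^*$ is onto, and both obtain support preservation from Theorem~\ref{theo:NRps}. The only cosmetic difference is which pairwise equivalence you prove directly, and you additionally spell out the (correct) verification of $(\mathcal{NR}_x)$ from the lower bi-Lipschitz bound, which the paper leaves implicit.
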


\begin{proof}
The equivalence $(i) \iff (iii)$ is contained \cite[Proposition~2.25]{Weaver2}. Next, $(ii)\iff (iii)$ is a general standard fact: an operator $T$ is an into isomorphism if and only if $T^*$ is onto (see Exercise~2.49 in \cite{FHHMZ}
for instance). 
To conclude, if $C_f$ is onto then it is clear that $C_f(\Lip_0(N))$ is norming for $\F(M)$, while $f$ preserves supports thanks to Theorem~\ref{theo:NRps}.

\end{proof}

However, there are support-preserving Lipschitz functions, even with  $C_f(\Lip_0(N))$ norming, which are not bi-Lipschitz. 

\begin{example}
Consider $M=N=[0,+\infty)$ and $f\colon M\to N$ given by $f(x)=x$ if $x\leq 1$ and $f(x)=\sqrt{x}$ if $x\geq 1$. Clearly $f$ is not bi-Lipschitz. We claim that $C_f(\Lip_0([0,+\infty))$ separates the points of $M$ uniformly. Indeed, given $x,y\in M$ with $x<y$, we have that $f^{-1}|_{[0,y]}$ is Lipschitz. Thus the function $g$ given by $g(t)=f^{-1}(t)$ if $0\leq t\leq y$ and $g(t)=f^{-1}(y)$ otherwise is Lipschitz,  and 
	\[(g\circ f)(t)=\begin{cases}t &\text{if } f(t)\leq y\\ f^{-1}(y)& \text{otherwise}.\end{cases}\] In particular $g \circ f$ is $1$-Lipschitz and satisfies $|g(f(x))-g(f(y))|=|x-y|$. Finally, note that $f$ satisfies  ($\mathcal{NR}_x$) for every $x$ and so $f$ preserves supports by Theorem~\ref{theo:NRps} or by our future Proposition~\ref{prop:biLipsupp}.
\end{example}

A similar counterexample can be given for a discrete metric space $M$. 

\begin{example}
Let $M=\N \cup\{0\}$ equipped with the metric $d_M$ verifying $d_M(0,n) = 1$ and $d_M(n,m) = d_M(n,0)+d_M(m,0)$ for every $n,m \in M$. Similarly we let $N=\N \cup\{0\}$ equipped with the metric $d_N$ such that $d_N(0,n) = 1/2^n$ and $d_N(n,m) = d_N(n,0)+d_N(m,0)$ for every $n,m \in N$. Then $Id\colon M \to N$ is clearly not bi-Lipschitz. However, it is readily seen that $C_{Id}(\Lip_0(N))$ separates points of $M$ uniformly. 
\end{example}

\subsection{The case of locally bi-Lipschitz maps}

\begin{proposition}
\label{prop:biLipsupp}
Let $f\in \Lip_0(M,N)$ be an injective map and let $x\in M$. 
Assume there are $r,\rho>0$ such that $f\restricted_{B(x,r)}$ is bi-Lipschitz and $f(M)\cap B(f(x),\rho)\subset f(B(x,r))$.
Then $f(x)\in\supp(\hat{f}(\gamma))$ whenever $x\in \supp(\gamma)$.
\end{proposition}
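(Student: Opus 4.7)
The plan is to mimic the structure of the proof of Theorem~\ref{theo:NRps} by contradiction, constructing a witness $h \in \Lip_0(N)$ whose support lies in a small ball around $f(x)$ (disjoint from $\supp(\widehat{f}(\gamma))$) but such that $h \circ f$ detects $\gamma$ near $x$. In Theorem~\ref{theo:NRps} the approximation $g \approx h \circ f$ was extracted from the weak$^*$-density of $C_f(\Lip_0(N))$, which required $\widehat{f}$ to be injective. Here injectivity of $\widehat{f}$ is not assumed, but a stronger local property is available: $f\restriction_{B(x,r)}$ being bi-Lipschitz yields a Lipschitz inverse $f^{-1} \colon f(B(x,r)) \to B(x,r)$, and one can build $h$ explicitly by transplanting a test function on $M$ through $f^{-1}$.

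Suppose towards a contradiction that $x \in \supp(\gamma)$ but $f(x) \notin \supp(\widehat{f}(\gamma))$. One may assume $x \neq 0_M$, treating the base-point case by the non-isolation argument used in the proof of Proposition~\ref{PSimpliesINJ}. Choose $\rho' \in (0, \min\set{\rho, d(0_N, f(x))})$ small enough so that $B(f(x),\rho') \cap \supp(\widehat{f}(\gamma)) = \emptyset$, and pick $r' \in (0,r)$ small enough so that $0_M \notin B(x,r')$ and $f(B(x,r')) \subset B(f(x), \rho'/2)$ (using continuity of $f$). Since $x \in \supp(\gamma)$, select $g \in \Lip_0(M)$ with $\supp(g) \subset B(x,r')$ and $\duality{g,\gamma} \neq 0$.

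Define $\tilde h_0 := g \circ (f^{-1}\restriction_{f(B(x,r))})$, which is Lipschitz by the bi-Lipschitz hypothesis, extend it to $h_0 \in \Lip(N)$ via McShane--Whitney, let $\omega$ be a $\rho'/2$-plateau at $f(x)$, and set $h := \omega \cdot h_0$. Then $h \in \Lip_0(N)$ with $\supp(h) \subset B(f(x),\rho')$, using that $0_N \notin B(f(x),\rho')$ and the standard estimate for a plateau times a Lipschitz function with bounded support. The key verification is that $h \circ f = g$ on the whole of $M$: for $y \in B(x,r')$ one has $\omega(f(y))=1$ and $h_0(f(y)) = g(y)$; for $y \in B(x,r) \setminus B(x,r')$ the function $g$ vanishes and $h_0(f(y)) = g(f^{-1}(f(y)))=0$; for $y \notin B(x,r)$ the non-returning condition yields $f(y) \notin B(f(x),\rho) \supset B(f(x),\rho')$, so $\omega(f(y))=0$. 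Therefore $\duality{g,\gamma} = \duality{h\circ f,\gamma} = \duality{h, \widehat{f}(\gamma)} = 0$, contradicting $\duality{g,\gamma} \neq 0$. The main obstacle is the careful bookkeeping of the parameters and, in particular, the intermediate annulus $B(x,r)\setminus B(x,r')$ where the plateau may still evaluate to $1$; it is precisely there that one genuinely needs the local bi-Lipschitz inverse to ensure $h_0 \circ f$ vanishes and not only the non-returning condition.
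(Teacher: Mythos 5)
Your proof is correct and follows essentially the same strategy as the paper's: argue by contradiction, transplant a test function supported near $x$ through the local Lipschitz inverse $f^{-1}$, and use the non-returning condition to guarantee that the resulting function on $N$ is supported in $B(f(x),\rho)$ and hence annihilates $\hat{f}(\gamma)$ while still pulling back to the original test function. The only difference is in the technical packaging: the paper checks directly, via a three-case estimate exploiting the positive gap between $f(B(x,r'))$ and $f(M)\setminus B(f(x),\rho)$, that $g\circ f^{-1}$ is Lipschitz on all of $f(M)$ and then extends, whereas you extend from $f(B(x,r))$ first and then localize with a plateau --- both routes are valid (and your reduction to $x\neq 0_M$ is not even needed, since the test-function characterization of the support and the vanishing of $h$ at $0_N$ hold at the base point as well).
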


Notice that in particular, the second hypothesis is satisfied if $f(x)$ is isolated in $f(M)$. Notice also that assuming only that $f \in \Lip_0(M,N)$ is injective and locally bi-Lipschitz is not enough to conclude that $\hat{f}$ is injective; see Example~\ref{e:CountableDiscrete}.

\begin{proof} 
	We may and we do assume that $\overline{f(M)}=N$. Suppose that $\gamma \in \F(M)$ and $x \in M$ are such that $x\in \supp(\gamma)$ but $f(x) \not\in \supp(\hat{f}(\gamma))$.
	By assumption, there exists $r,\rho>0$ such that $f|_{B(x,r)}$ is bi-Lipschitz and $f(M)\cap B(f(x),\rho)\subset f(B(x,r))$. We may assume that $\rho$ is small enough so that  $B(f(x),\rho)\cap \supp(\hat{f}(\gamma))=\emptyset$. 
	Take $0<r'<\min\{r, \rho/\norm{f}_L\}$, note that $f(B(x,r'))\subset B(f(x), \rho)$. Since $x\in \supp(\gamma)$, there is $\varphi\in \Lip(M)$ such that $\supp(\varphi)\subset B(x,r')$ and $\langle\varphi,\gamma\rangle\neq 0$. Note that the function $g=\varphi\circ f^{-1}\colon f(M)\to\mathbb R$ is Lipschitz.  Indeed,  
	\begin{itemize}
		\item If $p, q\in f(B(x,r))$ then $|g(p)-g(q)|\leq \norm{\varphi}_L\norm{f^{-1}|_{f(B(x,r))}}_L d(p,q)$.
		\item If $p,q\in f(M)\setminus f(B(x,r'))$ then $f^{-1}(p), f^{-1}(q)\notin B(x, r')$ (since $f$ is injective) and so $g(p)=g(q)=0$. 
		\item If $p\in f(B(x,r'))$ and $q\in f(M)\setminus f(B(x,r))$. Then $d(q,f(x))\geq \rho$ (otherwise, $q\in f(M)\cap B(f(x),\rho))\subset f(B(x,r))$, a contradiction) and $g(q)=0$. Also, $d(p,f(x))\leq r' \norm{f}_L$. Thus, $d(p,q)\geq \rho- r'\norm{f}_L=:\alpha>0$. We have
		\[ |g(p)-g(q)|=|g(p)|\leq \norm{\varphi}_\infty \leq \frac{\norm{\varphi}_\infty}{\alpha}d(p,q).\]
	\end{itemize}
	Now, we can extend $g$ uniquely to a Lipschitz function $\tilde{g}$ on $N=\overline{f(M)}$. Since $g|_{f(M)\setminus B(f(x),\rho)}=0$, we also have $\tilde{g}|_{N\setminus B(f(x),\rho)}=0$, and so $\supp{\tilde{g}}\subset B(f(x),\rho)$. Thus $\langle \tilde{g}, \hat{f}(\gamma)\rangle=0$. 
	In addition, $\varphi=\tilde{g}\circ f$. Indeed, let $x \in M$, then $\tilde{g}(f(x))=g(f(x))=\varphi\circ f^{-1}(f(x))=\varphi(x)$. Thus, $\langle \tilde{g}, \hat{f}(\gamma)\rangle=\langle \tilde{g}\circ f, \gamma\rangle= \langle \varphi, f\rangle \neq 0$, a contradiction.
\end{proof}

\begin{remark}
It is clear that if the assumptions of Proposition~\ref{prop:biLipsupp} are satisfied for every $x \in M\setminus \{0\}$, then $f$ preserves supports and so $\widehat{f}$ is injective. 
We do not know if this implies the stronger property ``$C_f(\Lip_0(N))$ is norming for $\F(M)$''. 
It does, of course, when $M$ is compact, since then $f$ is bi-Lipschitz.
On the other hand, $C_f(\Lip_0(N))$ norming does not imply that the assumptions of Proposition~\ref{prop:biLipsupp} are satisfied for some $x$. 
A simple counterexample is the Lipschitz map $f = Id\colon([0,1],\abs{\cdot}^{1/2}) \to ([0,1],\abs{\cdot})$. It is straightforward that $f$ is not bi-Lipschitz in a neighborhood of any point, therefore the assumptions of Proposition~\ref{prop:biLipsupp} are not satisfied.
The fact that $C_f(\Lip_0(N))$ separates points uniformly follows from a more general result; see Subsection~\ref{SubsectionSPU}.
\end{remark}

\begin{corollary}\label{cor:conlocconps} 
Assume that $M$ is connected and locally connected, and $f\in \Lip_0(M, \mathbb R)$ is an injective function which is locally bi-Lipschitz on $M\setminus\{0\}$.
Then $f$ preserves supports.
\end{corollary}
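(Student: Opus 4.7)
The goal is to establish both inclusions $\supp(\hat f(\mu))=\overline{f(\supp\mu)}$ for every $\mu\in\Free(M)$. The inclusion $\supp(\hat f(\mu))\subset\overline{f(\supp\mu)}$ comes for free from Proposition~\ref{preservsupplemma}, so the whole task reduces to proving $f(x)\in\supp(\hat f(\mu))$ whenever $x\in\supp(\mu)$. My plan is to reduce this pointwise statement to Proposition~\ref{prop:biLipsupp} by using connectedness to supply the ``non-returning'' condition.

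First, fix $\mu\in\Free(M)$ and $x\in\supp(\mu)\setminus\set{0}$. Since $f$ is locally bi-Lipschitz on $M\setminus\set{0}$ and $\set{0}$ is closed, I can pick $r>0$ small enough so that $B(x,r)\subset M\setminus\set{0}$ and $f\restricted_{B(x,r)}$ is bi-Lipschitz. Because $M$ is locally connected, $x$ has a neighborhood basis made of connected sets, so Lemma~\ref{lemma:connectedNRpointwise} (which uses that $M$ is connected, $x$ has a connected neighborhood basis, and $f\colon M\to\R$ is injective) applies and produces $\rho>0$ with $f(M)\cap B(f(x),\rho)\subset f(B(x,r))$. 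Both hypotheses of Proposition~\ref{prop:biLipsupp} are then satisfied at $x$, which delivers $f(x)\in\supp(\hat f(\mu))$.

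It remains to cover the case $x=0_M$ (assuming $0_M\in\supp\mu$, otherwise there is nothing to check). Here I cannot apply Proposition~\ref{prop:biLipsupp} directly because local bi-Lipschitzness is not assumed at the base point. Instead I use the standard observation, recalled in Section~\ref{Prelim:Free}, that $0_M$ is never isolated in $\supp\mu$: there exists a sequence $(x_n)\subset\supp(\mu)\setminus\set{0_M}$ with $x_n\to 0_M$. By the previous paragraph, $f(x_n)\in\supp(\hat f(\mu))$ for every $n$. Since $f$ is continuous with $f(0_M)=0_N$ and $\supp(\hat f(\mu))$ is closed, passing to the limit yields $f(0_M)=0_N\in\supp(\hat f(\mu))$. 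Combining both cases gives $f(\supp\mu)\subset\supp(\hat f(\mu))$, and taking closures concludes the proof.

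The only mildly delicate point is the treatment of the base point, since the hypotheses of Proposition~\ref{prop:biLipsupp} may fail there; however this is handled cleanly by the ``$0$ is not isolated in the support'' trick, exactly as in the implication $(ii)\Rightarrow(i)$ of Proposition~\ref{PSimpliesINJ}. Note that, alternatively, one could invoke Corollary~\ref{cor:connected} and only check injectivity of $\hat f$, but the direct route above already yields the stronger conclusion with no extra work.
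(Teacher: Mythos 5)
Your proof is correct and follows the same route as the paper: Lemma~\ref{lemma:connectedNRpointwise} supplies the non-returning condition, Proposition~\ref{prop:biLipsupp} then gives $f(x)\in\supp(\hat f(\mu))$ for $x\in\supp(\mu)\setminus\{0\}$, and the reverse inclusion is Proposition~\ref{preservsupplemma}. Your explicit treatment of the base point via the ``$0$ is never isolated in the support'' argument is a welcome bit of extra care that the paper's one-line proof leaves implicit, but it is not a different method.
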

\begin{proof}
Apply Lemma~\ref{lemma:connectedNRpointwise} and Proposition~\ref{prop:biLipsupp}. 
\end{proof}
A simple example where the above applies is $f \colon x \in [0,1] \mapsto x^2 \in [0,1]$.
We shall provide more applications of Proposition~\ref{prop:biLipsupp}.

\begin{corollary}\label{lemma:closureisolated} 
Let $M$ and $N$ be any metric spaces. Let $f\in \Lip_0(M, N)$ be an injective closed map, $\gamma\in \mathcal F(M)$ and $x\in \supp(\gamma)$. 
If $x$ is in the closure of isolated points of $\supp(\gamma)$, then $f(x)\in \supp(\hat{f}(\gamma))$.
\end{corollary}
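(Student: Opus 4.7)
The plan is to reduce the statement to the case when $x$ is itself an isolated point of $\supp(\gamma)$. Indeed, if the conclusion holds for every such point, then given a general $x$ in the closure of the isolated points of $\supp(\gamma)$, pick a sequence $(y_n)$ of isolated points with $y_n \to x$; by continuity of $f$ we have $f(y_n) \to f(x)$, and since $\supp(\hat{f}(\gamma))$ is closed, $f(x) \in \supp(\hat{f}(\gamma))$.

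To handle an isolated point $y$ of $\supp(\gamma)$ (necessarily $y \neq 0$, since $0$ is never isolated in a support), the key step is to establish a decomposition
\[
\gamma = c\, \delta(y) + \gamma', \qquad c \in \R \setminus \{0\}, \quad y \notin \supp(\gamma').
\]
Fix $r > 0$ small enough that $\overline{B(y, 2r)} \cap \supp(\gamma) = \{y\}$, and let $\omega$ be an $r$-plateau at $y$ as in Section~\ref{section-PS}. Let $W_\omega \colon \F(M) \to \F(\overline{B(y, 2r)} \cup \{0\})$ be the pre-adjoint of the associated multiplication operator $M_\omega$. For any $z \in \overline{B(y, 2r)} \setminus \{y, 0\}$ and any $g \in \Lip_0$ supported in a small neighborhood of $z$ disjoint from $\{y\}$, the product $\omega g$ vanishes on a neighborhood of $\supp(\gamma)$, so $\langle g, W_\omega(\gamma) \rangle = \langle \omega g, \gamma \rangle = 0$. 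This forces $\supp(W_\omega(\gamma)) \subset \{y\}$, whence $W_\omega(\gamma) = c\, \delta(y)$ for some $c \in \R$. Because $y \in \supp(\gamma)$, there is $g \in \Lip_0(M)$ with $\supp(g) \subset B(y, r)$ and $\langle g, \gamma \rangle \neq 0$; since $\omega g = g$ on $M$, this gives $c\, g(y) = \langle g, W_\omega(\gamma) \rangle = \langle g, \gamma \rangle \neq 0$, so $c \neq 0$. Setting $\gamma' := \gamma - c\, \delta(y)$, the identity $\langle g, \gamma \rangle = c\, g(y)$ for every $g$ supported in $B(y, r)$ shows that $y \notin \supp(\gamma')$.

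With the decomposition in hand, apply Proposition~\ref{preservsupplemma} to $\gamma'$ to get $\supp(\hat{f}(\gamma')) \subset \overline{f(\supp(\gamma'))}$. This is where the hypothesis that $f$ be closed and injective enters: $\supp(\gamma')$ is closed, hence $f(\supp(\gamma'))$ is closed, and injectivity together with $y \notin \supp(\gamma')$ yield $f(y) \notin f(\supp(\gamma')) \supseteq \supp(\hat{f}(\gamma'))$. Now $\hat{f}(\gamma) = c\, \delta(f(y)) + \hat{f}(\gamma')$ with $f(y) \neq 0$ (by injectivity and $y\neq 0$) and $c \neq 0$. Any small Lipschitz bump $h$ concentrated at $f(y)$, supported inside a neighborhood disjoint from $\supp(\hat{f}(\gamma'))$ with $h(f(y)) \neq 0$, satisfies $\langle h, \hat{f}(\gamma) \rangle = c\, h(f(y)) \neq 0$, which proves $f(y) \in \supp(\hat{f}(\gamma))$.

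The main technical obstacle is the decomposition step, in particular establishing both $\supp(W_\omega(\gamma)) \subset \{y\}$ and $c \neq 0$ via a careful bookkeeping with the plateau function $\omega$; once this is in place, the hypotheses that $f$ is closed and injective are used in a single short paragraph, and the reduction to isolated points is purely topological.
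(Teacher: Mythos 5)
Your proof is correct, but it takes a genuinely different route from the paper's. The paper handles the isolated-point case in two lines: it restricts $f$ to $\supp(\gamma)\cup\{0\}$, observes that an isolated point $x$ of $\supp(\gamma)$ makes the hypotheses of Proposition~\ref{prop:biLipsupp} trivially satisfied for this restriction (the ball $B(x,r)$ meets the support only in $\{x\}$, and closedness of $f$ gives $d\big(f(x),f(\supp(\gamma)\setminus\{x\})\big)>0$, i.e.\ the non-returning condition), and invokes that proposition. You instead split off the atom: using a plateau function and the multiplication operator $W_\omega$ you establish the decomposition $\gamma=c\,\delta(y)+\gamma'$ with $c\neq 0$ and $y\notin\supp(\gamma')$, then apply Proposition~\ref{preservsupplemma} to $\gamma'$ (closedness and injectivity of $f$ entering exactly to keep $f(y)$ away from the closed set $f(\supp(\gamma'))$) and detect the atom $c\,\delta(f(y))$ with a bump function. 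Your decomposition argument is sound: the only points you leave implicit are that $0\notin\supp(W_\omega(\gamma))$ (immediate since $0$ is never isolated in a support and $\supp(W_\omega(\gamma))\subset\{y,0\}$ with $y\neq 0$) and that the neighbourhood of $z$ should be taken disjoint from $\supp(\gamma)$ rather than merely from $\{y\}$ (harmless, since $\omega$ kills everything outside $B(y,2r)$ anyway). What each approach buys: the paper's is shorter because the work was already done in Proposition~\ref{prop:biLipsupp}; yours is self-contained modulo the multiplication-operator machinery and isolates a reusable fact of independent interest, namely that an isolated point of $\supp(\gamma)$ always carries a nonzero atom of $\gamma$. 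The reduction from closure-of-isolated-points to isolated points is identical in both proofs.
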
 

\begin{proof}
Suppose first that $x$ is isolated in $\supp(\gamma)$. Let $0<r<d(x, \supp(\gamma)\setminus\{x\})$ and $0<\rho < d\big(f(x),f(\supp(\gamma)\setminus\{x\}) \big)$ (the fact that $f$ is closed is used here), and consider the map $f\restricted_{\supp (\gamma)\cup\set{0}}\colon \supp(\gamma)\cup \set{0}\to f(\supp(\gamma)\cup \set{0})$. Then a direct application Proposition~\ref{prop:biLipsupp} implies that $f(x)\in \supp(\hat{f}(\gamma))$.

Otherwise, take a sequence $(x_n) \subset \supp(\gamma)$ of isolated points of $\supp(\gamma)$ such that $x_n \to x$.
We have that $\supp(\hat{f}(\gamma))\ni f(x_n)\to f(x)$, thus $f(x) \in \supp(\hat{f}(\gamma))$ as the support is closed. 
\end{proof}

Notice that it follows rather directly from Corollary~\ref{lemma:closureisolated} that if $M$ is compact and countable, then $f\colon M\to N$ injective implies $\hat{f}$ injective for every $N$ and every $f\in \Lip_0(M,N)$. This statement will be improved in Corollary~\ref{c:compact-liplin-metric}.
Finally, we also obtain a ``reduction to bounded metric spaces'' kind of result. 

\begin{corollary}\label{l:BddArrivalReduction}
	Let $f\in \Lip_0(M,N)$. Consider the metric on either $M$ or $N$ given by $\rho(x,y)=\min\{1,d(x,y)\}$. If we write $Id_M \colon x \in (M,d)\mapsto x \in (M,\rho)$, $Id_N\colon  x \in (N,d)\mapsto x \in  (N,\rho)$ and $f_{\rho} \colon x \in (M, \rho) \mapsto f(x) \in (N,\rho)$, then:
	\begin{enumerate}[$(a)$]
	    \item $Id_M$, $Id_N$ and $f_{\rho}$ are Lipschitz;
	    \item $f$ preserves supports if and only if $Id_N\circ f$ preserves supports; 
	    \item $f$ preserves supports whenever $f_{\rho}$ preserves supports.
	\end{enumerate}
\end{corollary}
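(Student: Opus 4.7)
The plan is to establish (a) by direct computation, and then reduce (b) and (c) to one key technical fact: the identity maps $Id_M$ and $Id_N$ themselves preserve supports. With that in hand, everything follows from Lemma~\ref{lemma:compositionsupp}.

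Part (a) is routine. Since $\rho \leq d$ on each space, the maps $Id_M$ and $Id_N$ are automatically $1$-Lipschitz. For $f_\rho$, a quick case split on whether $d(x,y)\leq 1$ or not shows
\[
\rho(f(x),f(y))=\min\{1,d(f(x),f(y))\}\leq \min\{1,\Lip(f)\,d(x,y)\}\leq \max(1,\Lip(f))\,\rho(x,y).
\]

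The heart of the argument is showing $Id_N$ preserves supports (the case of $Id_M$ is identical). Fix $\nu\in \Free((N,d))$. Proposition~\ref{preservsupplemma} gives $\supp(\widehat{Id_N}(\nu))\subset \overline{Id_N(\supp\nu)}$; this is equal to $\supp\nu$ as a subset of $N$ because $d$ and $\rho$ induce the same topology. For the reverse inclusion, fix $p\in \supp\nu$ and an open neighborhood $V$ of $p$ in $(N,\rho)$. Since the topologies coincide, we may shrink $V$ to a $d$-ball $B_d(p,r)$ with $r<1$. The support characterization yields $\psi\in \Lip_0((N,d))$ with $\supp\psi\subset V$ and $\langle \psi,\nu\rangle \neq 0$. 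Using $\psi(0)=0$ and $d(x,0)\leq r+d(p,0)$ for every $x\in \supp\psi$, one gets
\[
\|\psi\|_\infty \leq \Lip_d(\psi)\bigl(r+d(p,0)\bigr),
\]
since $\psi$ vanishes outside $\supp\psi$. Now a two-case analysis shows $\psi$ is $\rho$-Lipschitz: if $d(x,y)<1$ then $\rho(x,y)=d(x,y)$ and $|\psi(x)-\psi(y)|\leq \Lip_d(\psi)\rho(x,y)$, while if $d(x,y)\geq 1$ then $\rho(x,y)=1$ and $|\psi(x)-\psi(y)|\leq 2\|\psi\|_\infty \rho(x,y)$. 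So $\psi\in \Lip_0((N,\rho))$, and $\langle \psi,\widehat{Id_N}(\nu)\rangle = \langle \psi\circ Id_N,\nu\rangle = \langle\psi,\nu\rangle\neq 0$, proving $p\in \supp(\widehat{Id_N}(\nu))$.

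Part (b) is then immediate from Lemma~\ref{lemma:compositionsupp}: the direction $(\Rightarrow)$ uses part (a) of that lemma (both $f$ and $Id_N$ preserve supports), and $(\Leftarrow)$ uses part (b) of that lemma, noting that $Id_N$ is injective and closed (as a homeomorphism between $(N,d)$ and $(N,\rho)$, which share the same topology). For part (c), observe that $f_\rho\circ Id_M = Id_N\circ f$ as set-theoretic maps from $(M,d)$ to $(N,\rho)$. If $f_\rho$ preserves supports, then since $Id_M$ also does, Lemma~\ref{lemma:compositionsupp}(a) gives that $Id_N\circ f$ preserves supports, and part (b) concludes that $f$ preserves supports.

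The main technical obstacle is establishing that $Id_N$ preserves supports; once the boundedness estimate $\|\psi\|_\infty\leq \Lip_d(\psi)(r+d(p,0))$ is in hand the rest is bookkeeping. Everything else is a clean application of Lemma~\ref{lemma:compositionsupp} together with the factorization $f_\rho\circ Id_M = Id_N\circ f$.
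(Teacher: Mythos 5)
Your proof is correct, and its overall architecture coincides with the paper's: establish that $Id_M$ and $Id_N$ preserve supports, then deduce $(b)$ and $(c)$ from Lemma~\ref{lemma:compositionsupp} together with the factorization $Id_N\circ f=f_\rho\circ Id_M$. The one place where you diverge is the key sub-step that the identity maps preserve supports. The paper disposes of this in a single line by invoking Proposition~\ref{prop:biLipsupp}: since $\rho=d$ on balls of radius less than $1$, each point $x$ satisfies both hypotheses of that proposition (local bi-Lipschitzness, in fact local isometry, and the non-returning condition with, say, $r=\rho=1/2$), so $Id_N(x)\in\supp(\widehat{Id_N}(\nu))$ whenever $x\in\supp\nu$. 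You instead reprove this from scratch via the neighbourhood characterization of supports, transferring a $d$-Lipschitz bump $\psi$ supported near $p$ into a $\rho$-Lipschitz one by means of the estimate $\|\psi\|_\infty\leq \Lip_d(\psi)\bigl(r+d(p,0)\bigr)$ and the two-case bound $|\psi(x)-\psi(y)|\leq\max\bigl(\Lip_d(\psi),2\|\psi\|_\infty\bigr)\rho(x,y)$. This is sound (it is essentially a special case of the argument inside the proof of Proposition~\ref{prop:biLipsupp}), and it has the virtue of being self-contained; the cost is length, since the machinery you rebuild is already available in the paper. Everything else --- part $(a)$, the use of closedness and injectivity of $Id_N$ for the backward direction of $(b)$, and the reduction of $(c)$ to $(b)$ --- matches the paper's proof.
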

\begin{proof}
Assertion $(a)$ is clear. Next, note that $Id_N$ is a closed map which preserves supports thanks to Proposition~\ref{prop:biLipsupp}. So we may apply Lemma~\ref{lemma:compositionsupp} to obtain $(b)$. Finally, assume that $f_{\rho}$ preserves supports.  Since $Id_N \circ f = f_{\rho} \circ Id_M$, according to $(b)$, $f$ preserves supports if and only if $f_{\rho} \circ Id_M$ does so. 
Since $Id_M$ preserves supports, Lemma~\ref{lemma:compositionsupp}~$(a)$ implies that $f_\rho \circ Id_M$ preserves supports, and so $f$ preserves supports.  
\end{proof}

\subsection{Uniform separation of points}
\label{SubsectionSPU}

We will now provide some sufficient conditions on $f \colon M \to N$ which ensure that $C_f(\Lip_0(N))$ is norming for $\F(M)$. 

Before stating and proving the next proposition, let us introduce some terminology and notation. 
For $f:[0,\infty) \to \R$ and $g:[0,\infty)\to\R$ we define their \emph{inf-convolution} as
\[
f \square g (s)=\inf_{a+b=s} f(a)+g(b) \quad \mbox{ for all }s\geq 0.
\]
If $g(0)=0$ then $f\square g(s)\leq f(s)$ for all $s\geq 0$. 
If $C\geq 1$, we say that $f:[0,\infty)\to [0,\infty)$ is \emph{$C$-subadditive} if $f(a+b)\leq f(a)+Cf(b)$ for every $a,b\geq 0$.
It is easy to prove that if $f,g$ are $C$-subadditive then $f\square g$ is $C$-subadditive. 

\begin{proposition}\label{propSPU}
    Let $f\in \Lip_0(M,N)$ be an injective map. 
    Assume that there exist non-decreasing functions $\omega :[0,\infty) \to [0,\infty)$ such that 
    \begin{enumerate}[$(a)$]
        \item $\omega(0) = 0$ and $\omega$ is left continuous;
        \item $\exists C_1\geq 1$ such that $\omega$ is $C_1$-subadditive;
        \item $\exists C_2>0$ such that for every $x,y \in M$:    
            \begin{equation}\label{e:moduli}
            C_2 \omega(d(f(x),f(y)))  \leq d(x,y) \leq \omega(d(f(x),f(y))).
            \end{equation}
    \end{enumerate}
    Then $C_{f}(\Lip_0(N))$ separates the points of $M$ uniformly and $f$ preserves supports. In particular, $\hat{f}$ is injective.
\end{proposition}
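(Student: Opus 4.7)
The plan is two-fold. First, I will show that $C_f(\Lip_0(N))$ separates the points of $M$ uniformly (in the sense recalled before the proposition); since it is a sublattice of $\Lip_0(M)$, the cited criterion of Kalton then gives that it is norming for $\F(M)$, hence separating, and Fact \ref{lemma:separating} yields injectivity of $\widehat{f}$. Second, I will verify the non-returning condition $(\mathcal{NR}_x)$ at every $x \in M$, so that Theorem \ref{theo:NRps} delivers the preservation of supports.

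For the uniform separation, fix $x \neq y$ in $M$ and set $\Delta := d_M(x,y)$, $D := d_N(f(x), f(y)) > 0$ (the positivity uses the injectivity of $f$). I will use the tent function
\[
h(p) := \max\!\left(0,\ \Delta - \tfrac{\Delta}{D}\, d_N(p, f(x))\right), \qquad p \in N,
\]
which is $(\Delta/D)$-Lipschitz on $N$, takes values in $[0,\Delta]$, and satisfies $h(f(x)) = \Delta$ and $h(f(y)) = 0$. Subtracting the constant $h(0_N)$ places $h$ in $\Lip_0(N)$ without affecting any difference $h(p)-h(q)$ or the Lipschitz norm of $h\circ f$. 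The key estimate is that $\|h \circ f\|_L \leq 2C_1/C_2$ uniformly in $x,y$: for arbitrary $a, b \in M$, setting $s := d_N(f(a), f(b))$, one has
\[
|h(f(a)) - h(f(b))| \leq \min\!\left(\tfrac{\Delta}{D} s,\ \Delta\right) \quad\text{and}\quad d_M(a,b) \geq C_2\, \omega(s)
\]
by $(c)$. If $s > D$, then $\Delta \leq \omega(D) \leq \omega(s)$ bounds the ratio by $1/C_2$. If $s \leq D$, iterating the $C_1$-subadditivity shows $\omega(ns) \leq n\,C_1\,\omega(s)$ for every integer $n \geq 1$; taking $n = \lceil D/s \rceil \leq 2D/s$ (valid since $s \le D$) gives $\omega(D)/D \leq 2\,C_1\,\omega(s)/s$, and consequently $(\Delta/D)\, s / (C_2\, \omega(s)) \leq 2C_1/C_2$.

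Thus $g := h \circ f$ lies in $(2C_1/C_2)\, B_{\Lip_0(M)}$ and satisfies $|g(x) - g(y)| = d_M(x,y)$, establishing uniform separation; this gives $\widehat{f}$ injective via Kalton's criterion and Fact \ref{lemma:separating}. Finally, the second inequality in $(c)$ implies $f^{-1}(B(f(x), \rho)) \subset B(x, \omega(\rho))$ for every $x \in M$ and every $\rho > 0$, so $(\mathcal{NR}_x)$ holds at every point of $M$, and Theorem \ref{theo:NRps} concludes that $f$ preserves supports. The main technical obstacle is the iterative $C_1$-subadditivity estimate $\omega(D)/D \leq 2 C_1\, \omega(s)/s$ for $s \leq D$, which is exactly what converts the locally unbounded slope $\Delta/D$ of the tent on $N$ into a uniform Lipschitz bound on $M$ once paired with the lower bound $d_M \geq C_2\, \omega(d_N)$.
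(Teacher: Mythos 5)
Your proof is correct, and the overall skeleton matches the paper's: construct, for each pair $x\neq y$, a radial witness function on $N$ whose composition with $f$ is uniformly Lipschitz and separates $x$ from $y$, invoke Kalton's criterion and Fact~\ref{lemma:separating} for injectivity of $\hat f$, and then get preservation of supports from the second inequality in \eqref{e:moduli} via $(\mathcal{NR}_x)$ and Theorem~\ref{theo:NRps} (this last step is verbatim the paper's). Where you genuinely diverge is in the construction of the witness: the paper regularizes $\omega$ by the inf-convolutions $\omega_n=\omega\,\square\, n\,\mathrm{Id}$, uses $g_n(z)=\omega_n(d(z,f(y)))-\omega_n(d(f(y),0))$, bounds $\norm{C_f(g_n)}_L\leq C_1/C_2$ via the modulus estimate $\abs{\omega_n(t_1)-\omega_n(t_2)}\leq C_1\omega(\abs{t_1-t_2})$, and only recovers the separation $\omega(d(f(x),f(y)))\geq d(x,y)$ in the limit $n\to\infty$ --- which is exactly where the left continuity of $\omega$ in hypothesis $(a)$ is used. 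You instead take a single truncated tent of slope $\Delta/D$ centred at $f(x)$ and control its composition with $f$ through the iterated-subadditivity bound $\omega(D)/D\leq 2C_1\,\omega(s)/s$ for $0<s\leq D$ (your induction $\omega(ns)\leq nC_1\omega(s)$ and the choice $n=\lceil D/s\rceil\leq 2D/s$ are both sound, and the degenerate cases $s=0$ or $\omega(s)=0$ force $a=b$ by \eqref{e:moduli}, so they cause no trouble). What each approach buys: yours achieves the exact equality $\abs{g(x)-g(y)}=d(x,y)$ in one shot, avoids any limiting argument, and never uses left continuity of $\omega$, so it in fact proves a marginally stronger statement; the paper's approach yields the slightly better uniform constant $C_1/C_2$ in place of your $2C_1/C_2$, though this makes no difference to the conclusion.
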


\begin{proof}
    Fix $x \neq y \in M$. 
    For $n\in \N$ we define $\omega_n=\omega\square nId$.
    This is the largest $n$-Lipschitz function below $\omega$.
    Notice moreover that $\omega_n(0) = \omega(0) = 0$ and:
    \begin{itemize}
        \item[(i)] $\omega_n  \colon [0,+\infty) \to [0,+\infty)$ is non-decreasing;
        \item[(ii)] $\lim\limits_{n \to \infty} \omega_n(t) = \omega(t)$ for every $t \in [0 , \infty)$; 
        \item[(iii)] $|\omega_n(t_1) - \omega_n(t_2)| \leq C_1 \omega(|t_1-t_2|)$ for all $t_1,t_2 \in [0,+\infty)$.
    \end{itemize} 
    In order to prove (ii) when $t>0$, fix $\varepsilon>0$ and let $a< t$ be such that $\omega(a)>\omega(t)-\varepsilon$. 
    Take $n$ such that $n(t-a)\geq \omega(a)$. Then $\omega(b)+n(t-b)\geq \omega(a)>\omega(t)-\varepsilon$ for every $b\leq t$.
    
    In order to prove (iii) assume that $0\leq t_1<t_2$. 
    Then by $C_1$-subadditivity of $\omega_n$ and by the fact that $nId(0)=0$  we have
    \[
    \abs{\omega_n(t_1)-\omega_n(t_2)}=\omega_n(t_2)-\omega_n(t_1)\leq C_1 \omega_n(t_2-t_1)\leq  C_1 \omega(t_2-t_1).
    \]
        Next, for every $n \in \N$ we let $g_n \in \Lip_0(N)$ be the map given by
	\[\forall z \in N, \quad  g_n(z) = \omega_n(d(z,f(y))) - \omega_n(d(f(y),0)). \]
	For every $x_1, x_2 \in M$ we have by (iii)
	\begin{align*}
	    |g_n ( f(x_1) ) - g_n ( f(x_2) )| &\leq C_1 \omega\Big(\big|d( f(x_1) , f(y) ) - d( f(x_2) , f(y) ) \big| \Big) \\
	   &\leq  C_1  \omega( d( f(x_1) , f(x_2) ) ) \\
	   &\leq \frac{C_1}{C_2} d( x_1 , x_2 ).
	\end{align*}
	Hence $\|C_f(g_n)\|_{L} \leq \frac{C_1}{C_2}$. 
	Moreover, the point (ii) above implies that
	\[|g_n(f(x)) - g_n(f(y))| = \omega_n( d(f(x),f(y)) ) \underset{n \to \infty}{\longrightarrow} \omega( d( f(x) , f(y) ) ) \geq d(x,y).\]
	Thus $C_f(\Lip_0(N))$ separates the points of $M$ uniformly and so $\hat{f}$ is injective. 
	To conclude, note that the second inequality in \eqref{e:moduli} implies that $(\mathcal {NR}_x)$ holds for every $x\in M$, so $f$ preserves supports by Theorem~\ref{theo:NRps}.
\end{proof}

\begin{corollary}\label{Prop_Snow_SPU}
	Let $\alpha\in (0,1)$ and assume $(M,d)$ is bounded. 
	Then $Id \colon (M,d^\alpha) \to (M,d)$ is  a Lipschitz map such that $C_{Id}(\Lip_0(M, d))$ separates the points of $(M,d^\alpha)$ uniformly. In particular, $\widehat{Id}$ is injective.
\end{corollary}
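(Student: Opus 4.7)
The strategy is to apply Proposition~\ref{propSPU} directly to the map $f = Id \colon (M,d^\alpha) \to (M,d)$ with the modulus $\omega(t) = t^\alpha$. The work splits into checking that $Id$ is Lipschitz and then verifying the three hypotheses $(a)$, $(b)$, $(c)$ in Proposition~\ref{propSPU}.

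First I would note that $d^\alpha$ is indeed a metric on $M$ (standard for $\alpha \in (0,1]$) and check that $Id$ is Lipschitz. Since $M$ is bounded, let $D = \diam_d(M) < \infty$. Then for any $x,y \in M$,
\[
d(x,y) = d(x,y)^{1-\alpha} d(x,y)^\alpha \leq D^{1-\alpha}\, d^\alpha(x,y),
\]
so $Id\colon (M,d^\alpha) \to (M,d)$ is $D^{1-\alpha}$-Lipschitz. Injectivity is trivial.

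Next I would verify the hypotheses of Proposition~\ref{propSPU} with $\omega(t) = t^\alpha$. Condition $(a)$ is immediate: $\omega(0)=0$, and $\omega$ is continuous and non-decreasing on $[0,\infty)$. For $(b)$, the classical inequality $(a+b)^\alpha \leq a^\alpha + b^\alpha$ valid for $\alpha\in(0,1)$ shows that $\omega$ is $1$-subadditive, so $C_1 = 1$ works. For $(c)$, since $f = Id$, the target distance is $d(f(x),f(y)) = d(x,y)$, while the source distance between $x$ and $y$ is $d^\alpha(x,y) = d(x,y)^\alpha = \omega(d(f(x),f(y)))$. Hence both inequalities in \eqref{e:moduli} hold as equalities with $C_2 = 1$.

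Proposition~\ref{propSPU} then directly yields that $C_{Id}(\Lip_0(M,d))$ separates the points of $(M,d^\alpha)$ uniformly, and consequently $\widehat{Id}$ is injective. There is essentially no obstacle here; the only things to observe are that boundedness of $M$ is what makes $Id$ Lipschitz in the snowflaked direction, and that the subadditivity of $t \mapsto t^\alpha$ makes $\omega(t) = t^\alpha$ fit hypothesis $(b)$ automatically.
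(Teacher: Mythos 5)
Your proof is correct and follows exactly the paper's approach: the paper's own proof consists of the single observation that Proposition~\ref{propSPU} applies with $\omega(t)=t^\alpha$ and $C_1=C_2=1$. Your verification of the Lipschitz constant $D^{1-\alpha}$, the subadditivity of $t\mapsto t^\alpha$, and the equalities in \eqref{e:moduli} simply makes explicit the routine checks the paper leaves to the reader.
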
 

\begin{proof}
For the choice $\omega(t)=t^{\alpha}$ and $C_1=C_2=1$, all the requirements of Proposition~\ref{propSPU} are satisfied.
\end{proof}

\section{Metric spaces where injectivity is always preserved}

As the title of the section suggests, we will try to distinguish here the metric spaces $M$ such that $f\colon M\to N$ injective implies $\hat{f}$ injective for every $N$ and every $f\in \Lip_0(M,N)$ from those metric spaces which do not have this property.
For future reference, let us call such spaces \emph{Lip-lin injective spaces}. 

Let us give some basic properties of Lip-lin injective spaces. Obviously, Lip-lin injectivity is stable under Lipschitz equivalences, i.e. if $d$ and $\rho$ are Lipschitz equivalent metrics on $M$, then either both $(M,d)$ and $(M,\rho)$ are Lip-lin injective, or they both are not Lip-lin injective.
We will see later that this does not translate to an isomorphic property of $\Free(M)$.
Also, in the definition of Lip-lin injective we might consider only bounded metric spaces $N$.

\begin{lemma} \label{l:bounded-reduction}
	A metric space $M$ is Lip-lin injective if and only if for every bounded metric space $N$ and every $f\in \Lip_0(M,N)$, $\widehat{f}$ is injective whenever $f$ is injective.  
\end{lemma}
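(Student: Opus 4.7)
The forward direction is immediate: being Lip-lin injective, by definition, means injective Lipschitz maps linearise to injective operators for \emph{every} target $N$, so in particular for every bounded $N$.

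For the nontrivial direction, the plan is to reduce an arbitrary target $N$ to a bounded one by means of the truncated metric, as was already exploited in Corollary~\ref{l:BddArrivalReduction}. So, assume $M$ enjoys the one-sided property stated in the lemma, fix an arbitrary (complete) pointed metric space $N$ and an injective $f \in \Lip_0(M,N)$, and set $\rho(x,y) = \min\{1,d_N(x,y)\}$. The space $(N,\rho)$ is complete, pointed with base point $0_N$, and bounded with $\diam(N,\rho)\leq 1$.

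The key observation is that the identity map $Id_N \colon (N,d_N) \to (N,\rho)$ is $1$-Lipschitz and fixes the base point. Therefore $f_\rho := Id_N \circ f \colon (M,d_M) \to (N,\rho)$ belongs to $\Lip_0(M,(N,\rho))$; and since $Id_N$ and $f$ are injective as set maps, so is $f_\rho$. By the assumption applied to the bounded space $(N,\rho)$ and the injective Lipschitz map $f_\rho$, we get that $\widehat{f_\rho}$ is injective.

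To finish, I would invoke the functoriality of the linearisation (which is built into Proposition~\ref{diagramfree}): the identity $f_\rho = Id_N \circ f$ implies
\[
\widehat{f_\rho} \;=\; \widehat{Id_N}\circ \widehat{f}.
\]
If $\mu \in \F(M)$ satisfies $\widehat{f}(\mu)=0$, then $\widehat{f_\rho}(\mu) = \widehat{Id_N}(0)=0$, whence $\mu=0$ by injectivity of $\widehat{f_\rho}$. Thus $\widehat{f}$ is injective, and $M$ is Lip-lin injective. There is no real obstacle here beyond checking that $Id_N$ is genuinely $1$-Lipschitz between the two metrics (which is the defining property of truncation) and that basepoints and injectivity are transported correctly through the composition; everything else is formal.
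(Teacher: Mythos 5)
Your proof is correct and follows essentially the same route as the paper: the paper argues the contrapositive with the same truncated metric $\rho=\min\{1,d_N\}$, the same composition $g=Id_N\circ f$, and the same use of $\widehat{g}=\widehat{Id_N}\circ\widehat{f}$ (equivalently, $\ker\widehat{f}\subseteq\ker\widehat{g}$). There is no substantive difference.
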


\begin{proof}
    One implication is trivial and the other one is proved with a similar trick as in Corollary~\ref{l:BddArrivalReduction}. Indeed, assume that $M$ is not Lip-lin injective. Then there exists a metric space $N$ and a Lipschitz map $f\in \Lip_0(M,N)$ such that $f$ is injective but $\widehat{f}$ is not injective. Let $\rho$ be the metric on $N$ given by $\rho(x,y)=\min\{1,d(x,y)\}$ and let $Id_N\colon x \in  (N,d)\mapsto x \in (N,\rho)$. Next, let $g := Id_N \circ f : M \to (N , \rho)$. Clearly $(N,\rho)$ is a bounded metric space and $g$ is injective since both $f$ and $Id_N$ are injective. Moreover, since $\widehat{f}$ is not injective and $\widehat{g} = \widehat{Id_N} \circ \widehat{f}$, we easily obtain that $\widehat{g}$ is not injective.
\end{proof}

Further, Lip-lin injectivity is hereditary in the following sense.

\begin{lemma}\label{l:heredity}
Let $M$ be a metric space. If $M$ is Lip-lin injective and $W$ bi-Lipschitz-embeds into $M$, then $W$ is Lip-lin injective.
\end{lemma}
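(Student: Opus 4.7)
The plan is to prove the contrapositive: assuming $W$ is not Lip-lin injective, I will construct an injective $F \in \Lip_0(M, Z)$ (for some metric space $Z$) whose linearization $\widehat{F}$ is not injective, contradicting the hypothesis on $M$. So let $\phi \colon W \to M$ be a bi-Lipschitz embedding, which we may take base-point preserving since base points are irrelevant, and let $g \in \Lip_0(W, N)$ be injective with $\widehat{g}(\mu) = 0$ for some nonzero $\mu \in \F(W)$. Note $\phi(W)$ is closed in $M$ because $W$ is complete and $\phi^{-1}$ is Lipschitz. By Proposition~\ref{fchapinjbilip}, $\widehat{\phi}(\mu) \neq 0$, so producing $F$ with $\widehat{F}(\widehat{\phi}(\mu)) = 0$ will give the desired contradiction.

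The construction of $F$ goes in two stages. First, extend $g$ across $M$ in a Banach-space-valued sense: fix a base-point-preserving isometric embedding $\iota_0 \colon N \to Y := \ell_\infty(\Gamma)$ and use coordinatewise McShane--Whitney to extend $\iota_0 \circ g \circ \phi^{-1} \colon \phi(W) \to Y$ to a Lipschitz map $G \colon M \to Y$ (which then satisfies $G|_{\phi(W)} = \iota_0 \circ g \circ \phi^{-1}$, so in particular $G(0_M) = 0$). Second, adjoin an auxiliary coordinate that vanishes exactly on $\phi(W)$ and recovers injectivity off $\phi(W)$: define $\eta \colon M \to \ell_\infty(M)$ by
\[
\eta(x)(y) := \min\set{d(x,y),\; d(x,\phi(W))}, \qquad x,y \in M.
\]
I would then verify that $\eta$ is $1$-Lipschitz, that $\eta(x) = 0$ iff $x \in \phi(W)$, and that $\eta$ is injective on $M \setminus \phi(W)$: evaluating $\eta(x) = \eta(x')$ at $y = x$ gives $0 = \min\set{d(x,x'), d(x',\phi(W))}$, which forces $x = x'$ when $x' \notin \phi(W)$.

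Now set $Z := Y \times \ell_\infty(M)$ with the supremum norm and $F(x) := (G(x), \eta(x))$. The map $F$ is base-point preserving, Lipschitz, and injective (injectivity on $\phi(W)$ comes from $g$; on $M \setminus \phi(W)$ from $\eta$; and the two pieces are separated because $\eta$ vanishes precisely on $\phi(W)$). Crucially, $F \circ \phi = \iota \circ g$ where $\iota \colon N \to Z$, $n \mapsto (\iota_0(n), 0)$, is a base-point-preserving isometric embedding. Functoriality of linearization together with injectivity of $\widehat{\iota}$ then yields
\[
\widehat{F}(\widehat{\phi}(\mu)) = \widehat{\iota}(\widehat{g}(\mu)) = 0,
\]
while $\widehat{\phi}(\mu) \neq 0$, contradicting the Lip-lin injectivity of $M$ applied to the injective Lipschitz map $F$.

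The main obstacle is producing $\eta$ with all three properties simultaneously for an arbitrary (possibly unbounded) complete $M$. The most natural candidate $\eta(x) = d(x,\phi(W))\, \delta_M(x)$ is Lipschitz only when $M$ is bounded, because the $\delta_M$ factor is then unbounded; replacing $\delta_M$ by the bounded $\ell_\infty(M)$-valued distance gadget above bypasses this issue uniformly.
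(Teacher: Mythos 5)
Your proof is correct and follows essentially the same route as the paper's: argue by contraposition, extend the $\ell_\infty$-valued version of the bad map $g$ to all of $M$ by McShane--Whitney, and adjoin an auxiliary Lipschitz coordinate that vanishes exactly on the copy of $W$ and is injective off it, so that the resulting injective $F$ still kills $\widehat{\phi}(\mu)\neq 0$. The only difference is the choice of that auxiliary coordinate: the paper uses $x\mapsto\rho(x,W)\,\delta(x)\in\F(M,\rho)$ with the truncated metric $\rho=\min\{1,d\}$, whereas you use the truncated distance map $x \mapsto \bigl(\min\{d(x,y),d(x,\phi(W))\}\bigr)_{y\in M}\in\ell_\infty(M)$ -- both serve identically to handle unbounded $M$.
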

\begin{proof}
Suppose that $W$ bi-Lipschitz-embeds into $M$ but $W$ is not Lip-lin injective. Therefore there exists a metric space $N$ and an injective $f\in \Lip_0(W, N)$ with non-injective $\hat{f}$. We will prove that $M$ cannot be Lip-lin injective. Without loss of generality, we may assume that $0\in W \subset M$ and,
using the Fr\'echet embedding, that $N \subset \ell_\infty(N)$ isometrically.
Let $\tilde{f}\in \Lip_0(M,\ell_\infty(N))$ be a Lipschitz extension of $f$. Let $\rho$ be the metric on $M$ given by $\rho(x,y) = \min\{1,d(x,y)\}$ and let $\varphi(x)=\rho(x,W)$.
Then $g\colon (M, d) \to \mathbb \ell_\infty(N) \times \Free(M,\rho)$ defined by $g(x):=(\tilde{f}(x),\varphi(x)\delta(x))$ is
 Lipschitz. Indeed, $\tilde{f}$ is Lipschitz and 
\begin{align*}
    \|\varphi(x) \delta(x) - \varphi(y) \delta(y)\|_{\F(M,\rho)} &\leq |\varphi(x)| \; \|\delta(x)-\delta(y)\| + \|\delta(y)\| \; |\varphi(x) - \varphi(y)| \\
    &\leq  \rho(x,y) + \rho(y,0) \rho(x,y) \\
    &\leq 2 d(x,y).
\end{align*}
Moreover, it is readily seen that $g$ is injective while $\ker \hat{f} \subseteq \ker \hat{g}$, which concludes the proof.
\end{proof}

\begin{remark}
Notice that the growth of the dimension of the target space is unavoidable. 
For instance, consider $M=[0,1]^2$ with the Euclidean distance.
While the space $[0,1]$ embeds isometrically into $M$ and there is an injective Lipschitz map $f:[0,1]\to [0,1]$ such that $\hat{f}$ is non-injective (see Example~\ref{exam:NotLipLin}), there is no continuous injective function $g:M \to \R$.
This remark also explains why in the definition of Lip-lin injective spaces we use the universal quantifier on the target space.
\end{remark}

Note also that whenever we have $f\colon M\to N$ Lipschitz and injective, one can consider the metric $\rho(x,y)=d(f(x),f(y))$ on $M$, satisfying that the identity map $Id\colon M\to (M, \rho)$ is Lipschitz and injective. Moreover, considering $g\colon (M,\rho)\to N$ given by $g(x)=f(x)$, we have that $g$ is an into isometry such that $g\circ Id =f$. It follows then that $\hat{f}$ is injective if and only if $\hat{Id}$ is injective. As an immediate consequence, we get that Lip-lin injective spaces can be characterized just by looking at identity maps:

\begin{proposition} A metric space $(M,d)$ is Lip-lin injective if and only if for every metric $\rho$ on $M$ such that $Id\colon (M,d)\to (M,\rho)$ is Lipschitz, we have that $\hat{Id}$ is injective.
\end{proposition}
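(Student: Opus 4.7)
The proof plan is straightforward since the paragraph immediately preceding the proposition essentially contains the key idea. I will spell out both implications.

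For the forward direction, I would assume $(M,d)$ is Lip-lin injective and let $\rho$ be any metric on $M$ making $Id\colon (M,d)\to (M,\rho)$ Lipschitz. The set-theoretic identity is obviously injective as a map $M\to M$, so by Lip-lin injectivity applied to $N=(M,\rho)$ and $f=Id$, we immediately get that $\hat{Id}$ is injective.

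For the converse, suppose the identity condition holds and let $f\in \Lip_0(M,N)$ be injective. The plan is to factor $f$ through a pulled-back metric on $M$. Concretely, define $\rho\colon M\times M\to [0,\infty)$ by $\rho(x,y)=d_N(f(x),f(y))$; the injectivity of $f$ ensures $\rho$ separates points, and the triangle inequality and symmetry are inherited from $d_N$, so $\rho$ is a genuine metric on $M$. The estimate $\rho(x,y)\le \Lip(f)\,d(x,y)$ shows $Id\colon (M,d)\to (M,\rho)$ is Lipschitz, and the map $g\colon (M,\rho)\to N$ defined by $g(x)=f(x)$ is by construction an into isometry, hence $\hat{g}$ is an isometric embedding (in particular injective). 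Since $f=g\circ Id$, we have $\hat{f}=\hat{g}\circ \hat{Id}$, and the assumption gives that $\hat{Id}$ is injective, so $\hat{f}$ is a composition of injective operators and is injective.

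There is no real obstacle here: the only subtle point is checking that $\rho$ is a metric (which relies on injectivity of $f$) and that we may base-point $(M,\rho)$ at the same distinguished point $0_M$ so that both $Id$ and $g$ are base-point preserving, which is automatic since $g(0_M)=f(0_M)=0_N$ and $Id(0_M)=0_M$. One might add a brief remark that this proposition says Lip-lin injectivity depends only on the ``upper Lipschitz structure'' of $M$, which fits nicely with Lemma~\ref{l:heredity} and the earlier remarks on Lipschitz-equivalent metrics.
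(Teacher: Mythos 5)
Your proof is correct and follows essentially the same route as the paper, which proves the converse by pulling back the metric via $\rho(x,y)=d_N(f(x),f(y))$, factoring $f=g\circ Id$ with $g$ an into isometry, and concluding that $\hat{f}$ is injective if and only if $\hat{Id}$ is. The extra details you supply (that $\rho$ is a genuine metric thanks to injectivity of $f$, and the base-point bookkeeping) are correct and only make the argument more explicit.
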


\subsection{The compact case}

Now we will restrict our attention to the study of Lip-lin injectivity for compact $M$. A prominent role will be played by
the set
\[ s_0(M)=\{f\in \Lip_0(M): f\text{ is locally constant}\}.\]
We will also consider the space of locally flat Lipschitz functions (also known as little-Lipschitz functions), that is: 
\[ \lip_0(M):= \{f\in \Lip_0(M) : \forall \varepsilon>0 \ \exists \delta>0 : \sup_{0<d(x,y)<\delta} \frac{|f(x)-f(y)|}{d(x,y)}<\varepsilon\}.\]
It is readily seen that $s_0(M)$ is a vector sublattice of $\Lip_0(M)$ such that $s_0(M)\subset \lip_0(M)$.
The next lemma justifies our interest in $s_0(M)$; it says that locally constant functions on compact sets get always conserved in the image of composition operators with injective symbol. 

\begin{lemma}\label{l:LocConstConserved}
Let $M$ be a compact metric space and let $f\colon M \to N$ be Lipschitz and injective. 
Then $s_0(M) \subset C_f(\Lip_0(N))$.
\end{lemma}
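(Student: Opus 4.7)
The plan is to exploit the fact that any locally constant function on a compact space has finite range and that $f$ restricted to a compact set is a homeomorphism onto its image, which lets us construct a Lipschitz extension of $h \circ f^{-1}$ to all of $N$.

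Fix $h\in s_0(M)$. First I would observe that, since $M$ is compact and $h$ is locally constant, the cover $\{h^{-1}(\{h(x)\}): x \in M\}$ is an open cover of $M$ with no proper subcover, so $h$ takes only finitely many values $c_1,\ldots,c_k$. This yields a partition $M=\bigsqcup_{i=1}^k A_i$ into \emph{clopen} (hence compact) sets $A_i := h^{-1}(c_i)$. Since $f\in \Lip_0(M)$ sends $0$ to $0$, one of these values is $0$, and we may assume $0_M\in A_1$ with $c_1=0$.

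Next, since $f$ is a continuous injection from a compact space into $N$, it is a homeomorphism onto its image. Therefore the sets $f(A_1),\ldots,f(A_k)$ are compact and pairwise disjoint in $N$. The key step is to note that pairwise disjoint compact subsets of a metric space are pairwise at positive distance, so
\[
\delta := \min_{i \neq j} \dist(f(A_i),f(A_j)) > 0.
\]
This is where the compactness of $M$ enters essentially; without it, the $f(A_i)$ could accumulate on each other.

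Now I would define $\tilde g \colon f(M) \to \R$ by $\tilde g(f(x))=h(x)$, which is unambiguous by injectivity of $f$. If $y,y'\in f(M)$ lie in the same $f(A_i)$, then $\tilde g(y)=\tilde g(y')$; otherwise $d(y,y')\geq \delta$ and $|\tilde g(y)-\tilde g(y')|\leq 2\|h\|_\infty$, so
\[
|\tilde g(y) - \tilde g(y')| \leq \frac{2\|h\|_\infty}{\delta}\,d(y,y').
\]
Thus $\tilde g$ is Lipschitz on $f(M)$ with $\tilde g(0_N)=\tilde g(f(0_M))=h(0_M)=0$.

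Finally, apply McShane--Whitney extension to extend $\tilde g$ to $g\in \Lip(N)$ with $g(0_N)=0$, i.e.\ $g\in \Lip_0(N)$. By construction $C_f(g)=g\circ f = h$, so $h\in C_f(\Lip_0(N))$, as desired. The only delicate step is the positive-distance argument, but it is immediate from compactness; the rest is routine bookkeeping.
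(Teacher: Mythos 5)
Your proof is correct and follows essentially the same route as the paper's: finiteness of the range by compactness, the clopen level sets and their images being compact and pairwise at positive distance, the resulting Lipschitz function on $f(M)$, and a Lipschitz extension to $N$. The extra details you supply (the explicit Lipschitz constant $2\|h\|_\infty/\delta$ and the remark that $f$ is a homeomorphism onto its image) are harmless refinements of the same argument.
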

\begin{proof}
Clearly, since $M$ is compact, $f\in s_0(M)$ 
if and only if it has only finitely many values. Therefore, if $\varphi \in s_0(M)$ then we can write $\varphi(M)=\set{a_1,\ldots,a_n}$.
Notice that the sets $\varphi^{-1}(a_i)$ cover $M$, are compact and pairwise disjoint. 
Therefore they are mutually at positive distance.
The same is true for $f(\varphi^{-1}(a_i))$: they cover $f(M)$, are compact and mutually at positive distance. 
Thus, if we define $g(x)=a_i$ for every $x \in f(\varphi^{-1}(a_i))$, we will have $g \in \Lip_0(f(M))$.
It is clear that $\varphi=\tilde{g}\circ f$ for any Lipschitz extension $\tilde{g}\in \Lip_0(N)$ of $g$ and we are done.
\end{proof}

With the help of the above lemma, one can see that, for $M$ compact, the weak$^*$-density of $s_0(M)$ implies that $M$ is Lip-lin injective. 
The next lemma shows in particular that, for those $M$ where $s_0(M)$ separates points of $M$, the converse is true even in the non-compact case.

\begin{lemma}\label{lemma:liplindenselattice}
Let $M$ be a Lip-lin injective metric space and $W\subset \Lip_0(M)$ be a vector lattice separating the points of $M$. Then $W$ is weak$^*$-dense in $\Lip_0(M)$.
\end{lemma}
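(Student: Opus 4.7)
The strategy is to introduce a coarser metric $\rho$ on $M$ built from $W$, work in its completion where Kalton's norming-versus-uniform-separation criterion will apply to $W$, and then transport the conclusion back to $(M,d)$ via Lip-lin injectivity.

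Set $\rho(x,y) := \sup\{|w(x)-w(y)| : w \in W,\ \Lip(w)\leq 1\}$. Since each such $w$ is $1$-Lipschitz we have $\rho \leq d$, and since $W$ separates points of $M$, $\rho$ is a genuine metric. Let $\tilde M$ be the completion of $(M,\rho)$. The identity $Id\colon (M,d)\to \tilde M$ is then $1$-Lipschitz and injective, so the Lip-lin injectivity hypothesis guarantees that $\widehat{Id}\colon \F(M,d)\to \F(\tilde M)$ is injective.

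Next I would verify that for every $w \in W$, the Lipschitz constant of $w$ with respect to $\rho$ equals the Lipschitz constant with respect to $d$: one inequality follows by applying the definition of $\rho$ to $w/\Lip(w)$, and the other from $\rho \leq d$. Hence each $w \in W$ extends uniquely to $\tilde w\in \Lip_0(\tilde M)$, and the family $\tilde W := \{\tilde w : w\in W\}$ is a vector sublattice of $\Lip_0(\tilde M)$ (lattice and linear operations on Lipschitz functions commute with uniformly continuous extension to the completion). The key technical step is then to establish
\[
\sup\{|\tilde w(p)-\tilde w(q)|:\tilde w\in \tilde W,\ \Lip(\tilde w)\leq 1\}=\tilde\rho(p,q)\qquad \forall\,p,q\in \tilde M.
\]
The inequality $\leq$ is trivial. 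For $\geq$, approximate $p,q$ by sequences $(x_n),(y_n)\subset M$ in $\tilde\rho$, pick $w_n\in W$ with $\Lip(w_n)\leq 1$ and $|w_n(x_n)-w_n(y_n)|$ close to $\rho(x_n,y_n)$, and use that each $\tilde w_n$ is $1$-Lipschitz to bound $|\tilde w_n(p)-\tilde w_n(q)|$ from below via the triangle inequality. This yields uniform separation of the points of $\tilde M$ by $\tilde W$.

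With $\tilde W$ a uniformly separating sublattice of $\Lip_0(\tilde M)$, the Kalton result quoted in the excerpt (Proposition~3.4 of \cite{Kalton04}) gives that $\tilde W$ is norming, hence separating, for $\F(\tilde M)$. To conclude, let $\mu\in \F(M,d)$ satisfy $\langle w,\mu\rangle=0$ for every $w\in W$; then for every $\tilde w\in \tilde W$,
\[
\langle \tilde w,\widehat{Id}(\mu)\rangle = \langle \tilde w\circ Id,\mu\rangle = \langle w,\mu\rangle = 0,
\]
so $\widehat{Id}(\mu)=0$, and by injectivity of $\widehat{Id}$ we obtain $\mu=0$. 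Thus $W^\perp=\{0\}$ in $\F(M)$, which is equivalent to $W$ being weak$^*$-dense in $\Lip_0(M)$. The main obstacle I anticipate is the uniform-separation step on $\tilde M$: the supremum defining $\rho$ does not obviously commute with the Cauchy limits in $\tilde M$, so the approximating sequences $(x_n),(y_n)$ and the witnessing functionals $w_n$ must be chosen in tandem and the $1$-Lipschitz control of the $\tilde w_n$ used to absorb the remainders.
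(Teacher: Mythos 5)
Your proposal is correct and follows essentially the same route as the paper: define $\rho$ as the supremum over $W\cap B_{\Lip_0(M,d)}$, observe that $W$ (transported to the new metric) separates points uniformly so Kalton's Proposition~3.4 makes it norming, and then use injectivity of $\widehat{Id}$ to pull separation back to $\F(M,d)$. The only difference is that you pass to the completion of $(M,\rho)$ and extend $W$ there, a technical precaution the paper omits by working with $(M,\rho)$ directly (harmless, since $\F$ of a space and of its completion coincide canonically).
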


\begin{proof}
Consider the distance in $M$ given by
\[ \rho(x,y):=\sup_{\varphi\in W\cap B_{\Lip_0(M,d)}} |\varphi(x)-\varphi(y)|.\]
Clearly, the identity $Id\colon(M,d)\to(M,\rho)$ is a 1-Lipschitz injective map. Note also that if $\varphi\in W$ then $\norm{\varphi\circ Id^{-1}}_{\Lip_0(M,\rho)}\leq\norm{\varphi}_{\Lip_0(M,d)}$. Setting $W'=\{\varphi\circ Id^{-1}: \varphi\in W\}$, it follows that $W'$ separates the points of $(M,\rho)$ uniformly. Since $W'$ is a subspace of $\Lip_0(M,\rho)$ which is also a lattice, we get from Proposition~3.4 in~\cite{Kalton04} that it is norming for $\mathcal F(M,\rho)$.

Now, assume that $W$ is not weak$^*$-dense. Then it does not separate the points of $\mathcal F(M)$, i.e. there exists $\gamma\in\mathcal F(M)\setminus\{0\}$ such that $\<\varphi,\gamma\>=0$ for all $\varphi \in W$. Then
\[\<\varphi\circ Id^{-1}, \hat{Id}(\gamma)\>= \<\varphi,\gamma\>=0 \]
whenever $\varphi\circ Id^{-1}\in W'$. Since $W'$ is norming, we get $\hat{Id}(\gamma)=0$. This contradicts the assumption that $M$ is Lip-lin injective. 
\end{proof}

Recall that a metric space $M$ is totally disconnected if the connected components in $M$ are the one-point sets. Also, we say that $M$ is totally separated if for every $x\neq y \in M$, there exists a clopen set $F$ such that $x\in F$ and $y\notin F$. It is readily seen that every totally separated metric space is totally disconnected. Moreover, if the metric space is compact then the converse is true (see page 20 in \cite{Disco}). 
The next theorem may be considered as a functional characterization of compact totally disconnected Lip-lin injective spaces.

\begin{theorem}\label{t:CharCptTotDiscoLLI} 
Let $M$ be a compact metric space. Then the following properties are equivalent.
\begin{itemize}
    \item[$(i)$] $s_0(M)$ is weak$^*$-dense in $\Lip_0(M)$,
    \item[$(ii)$] the set $\set{f \in \Lip_0(M):\lambda(f(M))=0}$ is weak$^*$ dense in $\Lip_0(M)$,
    \item[$(iii)$] $s_0(M)$ separates points of $M$ uniformly,
    \item[$(iv)$]  the set $\set{f \in \Lip_0(M):\lambda(f(M))=0}$ separates points of $M$ uniformly,
    \item[$(v)$] $M$ is Lip-lin injective and totally disconnected.
\end{itemize}
\end{theorem}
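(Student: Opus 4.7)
The plan is to establish the equivalences by exploiting the trivial containment $s_0(M) \subset \{f \in \Lip_0(M) : \lambda(f(M)) = 0\}$, valid since on compact $M$ every locally constant Lipschitz function has finite range. This immediately yields $(iii) \Rightarrow (iv)$ and $(i) \Rightarrow (ii)$. The equivalence $(i) \Leftrightarrow (iii)$ follows from Kalton's Proposition~3.4 (\cite{Kalton04}) applied to the subspace-sublattice $s_0(M)$: for a subspace-sublattice of $\Lip_0(M)$, being norming is equivalent to uniformly separating points of $M$, and for such an object this is in turn equivalent to being weak*-dense (the non-obvious direction uses that the lattice structure upgrades bare separation to uniform separation once the subspace is weak*-dense).

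For $(iii) \Leftrightarrow (v)$: in the forward direction, uniform separation by $s_0(M)$ forces $M$ to be totally disconnected, since locally constant functions are constant on connected components. For any injective $f \in \Lip_0(M,N)$, Lemma~\ref{l:LocConstConserved} gives $s_0(M) \subset C_f(\Lip_0(N))$, so $C_f(\Lip_0(N))$ also uniformly separates points; by Kalton it is then weak*-dense in $\Lip_0(M)$, and $\hat f$ is injective by Fact~\ref{lemma:separating}, proving Lip-lin injectivity. Conversely, a compact totally disconnected metric space is totally separated, so indicator functions of clopen sets (suitably normalized at the base point) give elements of $s_0(M)$ separating points of $M$; Lip-lin injectivity together with Lemma~\ref{lemma:liplindenselattice} upgrades this to weak*-density of $s_0(M)$, which is $(i)$ and hence $(iii)$ by the previous paragraph.

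The main obstacle is closing the loop through $(ii)$ or $(iv)$, since $\{f : \lambda(f(M))=0\}$ is generally not a vector subspace and Kalton's proposition does not apply directly. The plan is to prove $(ii) \Rightarrow (v)$ (with $(iv) \Rightarrow (v)$ being entirely analogous). Weak*-density of $\{f : \lambda(f(M))=0\}$ implies this set separates points of $M$; then since a Lipschitz image of a connected component of $M$ is a connected subset of $\R$, i.e. an interval, and an interval of null Lebesgue measure is a point, any such $f$ must be constant on connected components, forcing $M$ to be totally disconnected. For Lip-lin injectivity, given an injective $g \in \Lip_0(M,N)$, the plan is to show that every $f \in \Lip_0(M)$ with $\lambda(f(M))=0$ belongs to the weak*-closure of $C_g(\Lip_0(N))$, by approximating such $f$ by locally constant functions in $s_0(M) \subset C_g(\Lip_0(N))$ (Lemma~\ref{l:LocConstConserved} again). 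Since $f(M) \subset \R$ is compact with empty interior and $M$ is totally disconnected, one can pull back fine partitions of $f(M)$ obtained from cuts in the open dense set $\R\setminus f(M)$ to clopen partitions of $M$; the technical heart is to control the Lipschitz constants of the locally constant approximants uniformly, by choosing the cuts in the gaps of $f(M)$ so that the resulting cells of $M$ are well-separated relative to their diameters.
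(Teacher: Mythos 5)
Your implications $(iii)\Rightarrow(iv)$, $(i)\Rightarrow(ii)$, $(iii)\Rightarrow(v)$ (via Lemma~\ref{l:LocConstConserved} and Fact~\ref{lemma:separating}) and $(v)\Rightarrow(i)$ (via total separatedness and Lemma~\ref{lemma:liplindenselattice}) all match the paper. The genuine gap is $(i)\Rightarrow(iii)$, which you dispose of in one parenthetical: ``the lattice structure upgrades bare separation to uniform separation once the subspace is weak$^*$-dense.'' Kalton's Proposition~3.4 does not say this: it says a subspace-sublattice is \emph{norming} if and only if it separates points of $M$ uniformly. Weak$^*$-density of a subspace $W\subset X^*$ means $W$ is total (separating for $X$), and total subspaces of a dual need not be norming; nothing in the lattice structure alone bridges that. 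This is in fact the most delicate implication of the theorem, and the paper proves it by a detour you do not have: since $s_0(M)\subset\lip_0(M)$, $(i)$ forces $\lip_0(M)$ to separate points of $\F(M)$; because $M$ is compact, every $f\in\lip_0(M)$ attains its Lipschitz norm, so the Petunin--Plichko theorem applies and yields $\lip_0(M)^*=\F(M)$, whence $\lip_0(M)$ is $1$-norming and separates points of $M$ uniformly; finally Mazur's lemma turns the weak density of $s_0(M)$ in $\lip_0(M)$ into norm density, transferring uniform separation to $s_0(M)$. Without this step (or a substitute) your implication graph is $(iii)\Rightarrow(i)\Leftrightarrow(ii)\Leftrightarrow(v)$ and $(iii)\Rightarrow(iv)\Rightarrow(v)$, and nothing ever returns to $(iii)$ or $(iv)$, so the equivalence is not closed.

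A secondary problem lies in your approximation of a null-image $f$ by locally constant functions. You propose to choose the cuts in the gaps of $f(M)$ ``so that the resulting cells of $M$ are well-separated relative to their diameters.'' That is not achievable in general: the preimage cells $f^{-1}(J_i)$ can have arbitrarily large diameter compared with their mutual distances, since $f$ is merely Lipschitz, and if you then assign to each cell a value of $f$ taken on it, the oscillation of $f$ on a cell enters the Lipschitz estimate and destroys the uniform bound. The paper's Lemma~\ref{l:FVApproximationOfNV} avoids the geometry of $M$ entirely: it builds finitely-valued functions $\psi_n$ on $A=\overline{f(M)}$ whose values are \emph{cumulative lengths of the complementary gaps} of $A$; because $\lambda(A)=0$, these $\psi_n$ are $1$-Lipschitz on $A$ and converge pointwise to the identity, so $\varphi_n=\psi_n\circ f\in s_0(M)$ satisfies $\norm{\varphi_n}_L\leq\norm{f}_L$ and $\varphi_n\to f$ pointwise, hence weak$^*$. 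If you adopt that choice of values your step goes through (and, done with the norm control, it gives both $(ii)\Rightarrow(i)$ and $(iv)\Rightarrow(iii)$, as in the paper); as written, the step you call the technical heart would fail.
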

Notice that we cannot remove the assumption of compactness in this theorem (see  Example~\ref{e:CountableDiscrete}).
Indeed, since $M$ in this example is discrete we have $\Lip_0(M)=s_0(M)$.
Also, we do not know if $M$ Lip-lin injective implies that $M$ is totally disconnected.

\begin{proof}
$(i) \implies (v)$. Let $N$ and $f\in \Lip_0(M,N)$ injective be fixed. 
By Lemma~\ref{l:LocConstConserved} we have $s_0(M) \subset C_f(\Lip_0(N))$. Thus Fact~\ref{lemma:separating} yields the Lip-lin injectivity. 
It is clear that the weak$^*$-density of $s_0(M)$ implies that $M$ is totally disconnected.
Indeed, if there is a non-trivial connected component containing two points $x\neq y$, then $\duality{\varphi,\delta(x)-\delta(y)}=0$ for every $\varphi \in s_0(M)$ and so $s_0(M)$ is not weak$^*$-dense.
\smallskip

$(v) \implies (i)$. Since $M$ is totally disconnected and compact, it is totally separated and therefore $s_0(M)$ separates the points of $M$. Lemma~\ref{lemma:liplindenselattice} yields that $s_0(M)$ is weak*-dense in $\Lip_0(M)$.
\smallskip

$(iii) \implies (i)$. Indeed, since $s_0(M)$ is a vector lattice, Proposition~3.4 in~\cite{Kalton04} shows that if  $s_0(M)$ separates points of $M$ uniformly, then $s_0(M)$ is norming. In such case, of course, $s_0(M)$ is weak$^*$-dense in $\Lip_0(M)$.
\smallskip

$(i) \implies (iii)$. Since $s_0(M) \subset \lip_0(M)$, (i) implies that $\lip_0(M)$ separates points of $\Free(M)$. A standard application of the theorem of Petunin and Plichko~\cite{Petunin}  yields that $\lip_0(M)^*=\Free(M)$ (see \cite{Dalet} for more details).
Now (i) implies that $s_0(M)$ is weakly dense in $\lip_0(M)$. Hence Mazur's lemma gives that $s_0(M)$ is norm dense in $\lip_0(M)$.
Since for every $x\neq y \in M$ and every $\varepsilon>0$ there is $f\in \lip_0(M)$, $\norm{f}<1$ such that $\duality{f,\delta(x) - \delta(y)} >(1-\varepsilon)d(x,y)$, there is also $\varphi \in s_0(M)$ with the same properties.

The implications $(i) \implies (ii)$ and $(iii) \implies (iv)$ are trivial
and the proof of the converse implications 
follows immediately from Lemma~\ref{l:FVApproximationOfNV} below.
\end{proof}

\begin{lemma}\label{l:FVApproximationOfNV}
 Let $M$ be a metric space and let $f \in \Lip_0(M)$ such that $\lambda(\overline{f(M)})=0$. Then there is $(\varphi_n)\subset s_0(M)$ such that for each $n$ we have $\norm{\varphi_n}_L\leq \norm{f}_L$ and $\varphi_n \to f$ weakly$^*$.
\end{lemma}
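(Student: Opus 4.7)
My plan is to exploit the fact that $K:=\overline{f(M)}$ is a closed subset of $\R$ of Lebesgue measure zero, hence has empty interior in $\R$. For each $n \in \N$, I will choose an open set $U_n \supset K$ with $\lambda(U_n) < 1/n$; since $\lambda(U_n)$ is finite, $U_n$ automatically decomposes into a countable disjoint union of \emph{bounded} open intervals $\bigsqcup_{i} (a_i^n, b_i^n)$.

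The central building block will be the ``collapse'' function $g_n\colon\R\to\R$ defined by the signed integral
\[
g_n(t) = \int_0^t \indicator{\R\setminus U_n}(s)\,ds.
\]
It is immediate that $g_n$ is $1$-Lipschitz (its almost-everywhere derivative lies in $[0,1]$), vanishes at $0$, is constant on each component $(a_i^n,b_i^n)$ of $U_n$ (the integrand is zero there), and satisfies the uniform estimate $|g_n(t)-t|\le \lambda(U_n)<1/n$ for every $t\in\R$. I would then set $\varphi_n := g_n\circ f$; this gives an element of $\Lip_0(M)$ with $\|\varphi_n\|_L \le \Lip(g_n)\cdot \|f\|_L \le \|f\|_L$.

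To see that $\varphi_n \in s_0(M)$, I fix $x\in M$ and use that $f(x)\in K \subset U_n$ lies in some component $(a_i^n,b_i^n)$; then $f^{-1}((a_i^n,b_i^n))$ is an open neighbourhood of $x$ in $M$ on which $g_n\circ f$ is constant, because $g_n$ is constant on that interval. The uniform bound on $g_n$ gives $|\varphi_n(x)-f(x)|\le 1/n$ for every $x\in M$, so $\varphi_n\to f$ pointwise on $M$. Since $(\varphi_n)$ is bounded in $\Lip_0(M)$ and the weak$^*$ topology on bounded subsets of $\Lip_0(M)=\Free(M)^*$ coincides with the topology of pointwise convergence (as recalled in Subsection~\ref{Prelim:Free}), I conclude that $\varphi_n\to f$ weak$^*$.

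I do not anticipate a serious obstacle here: the construction is essentially the standard trick of integrating $\indicator{\R\setminus U_n}$ to produce a $1$-Lipschitz function locally constant on a neighbourhood of $K$, and the remaining checks are direct. The only minor point worth flagging is the need to ensure that every connected component of $U_n$ is bounded, so that $g_n$ is genuinely constant on that component; this is automatic from $\lambda(U_n)<\infty$.
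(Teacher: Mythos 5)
Your proof is correct. It follows the same overall strategy as the paper's argument -- compose $f$ with a $1$-Lipschitz self-map of the line that is locally constant around the null set $\overline{f(M)}$, and then use that on bounded subsets of $\Lip_0(M)$ weak$^*$ convergence is pointwise convergence -- but the construction of the collapsing maps is genuinely different. The paper enumerates the complementary open intervals of the closed null set $A=\overline{f(M)}\cup\{0\}$ and takes finite partial sums of their lengths, producing \emph{finitely-valued} $1$-Lipschitz functions $\psi_n\in s_0(A)$ converging pointwise to the identity on $A$; it then sets $\varphi_n=\psi_n\circ f$. You instead thicken $\overline{f(M)}$ to an open set $U_n$ of measure less than $1/n$ (outer regularity) and integrate $\indicator{\R\setminus U_n}$, which yields locally constant but in general countably-valued approximants together with the stronger \emph{uniform} estimate $|\varphi_n-f|\leq 1/n$. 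Both routes are complete: finite-valuedness is not required by the definition of $s_0(M)$ (only local constancy is), so nothing is lost, and your version buys a cleaner, uniform error bound at no extra cost. The one point you flag -- boundedness of the components of $U_n$ -- is in fact not needed at all: $g_n$ is constant on every connected component of $U_n$, bounded or not, simply because the integrand vanishes there; what matters for local constancy of $\varphi_n$ is only that each $f(x)$ lies in some open component, which is automatic.
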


\begin{proof}
It is enough to show that for any closed negligible $A \subset \R$ containing 0 there is a sequence $(\psi_n)$ of finitely-valued 1-Lipschitz functions $\psi_n\in s_0(A)$ such that $\psi_n \to Id\restricted_A$ pointwise.
Indeed, in this case $\varphi_n:=\psi_n\circ f$ satisfies the conclusion of the lemma as the weak$^*$ and the pointwise convergence coincide on bounded sets of Lipschitz functions.

The proof of the claim can be inferred from the proof of Theorem~4.3 in~\cite{AlPePr_2019}. For the reader's convenience we furnish the details.
We write $\R \setminus A=\bigcup I^+_j \cup \bigcup I^-_j$ where $I^\pm_j$ are pairwise disjoint open intervals such that $I^+_j \subset (0,+\infty)$ and $I^-_j\subset (-\infty,0)$.
For every $n \in \N$ we define $\psi_n(0)=0$ and
\[
\psi_n(x)=
\sum_{j\leq n, x\geq \sup I^+_j}\lambda(I^+_j) \;\; - \sum_{j\leq n, x\leq \inf I^-_j}\lambda (I^-_j).
\]
The required properties of $(\psi_n)$ are easy to check.
\end{proof}

In the following corollary we list some metric conditions which imply that $M$ is Lip-lin injective (and totally disconnected).

\begin{corollary}\label{c:compact-liplin-metric}
 Let $M$ be a compact metric space. Any of the following conditions implies that $M$ is Lip-lin injective. 
\begin{itemize}
\item[(a)] 
The one-dimensional Hausdorff measure of $M$ is $0$. 
\item[(b)] There exists $\rho>1$ such that for every $\varepsilon>0$, $M$ can be covered by finitely many  balls $B(x_i,r)$ of radius $r \leq \varepsilon$ such that the balls $B(x_i,\rho r)$ are pairwise disjoint.
\item[(b')] There exists $\rho>0$ such that for every $\varepsilon>0$, $M$ can be covered by finitely many closed sets $E_i$ such that $\sup_i \diam(E_i)=r\leq \varepsilon$ and the sets $[E_i]_{\rho r} := \{x \in M \; | \; d(x,E_i) \leq \rho r \}$ are pairwise disjoint.
\item[(c)] For every $x \in M$, $\lambda(\set{d(x,y):y\in M})=0$.
\item[(c')] There exists $C>0$ such that for every $x\neq y \in M$, there exists $\varphi \in CB_{\Lip_0(M)}$ satisfying $\varphi(x)-\varphi(y)\geq d(x,y)$ and $\lambda(\varphi(M))=0$.
\end{itemize}
\end{corollary}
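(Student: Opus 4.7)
The plan is to invoke Theorem~\ref{t:CharCptTotDiscoLLI}, which for compact $M$ reduces Lip-lin injectivity (together with total disconnectedness) to the uniform separation of points of $M$ by the set $\{f\in\Lip_0(M):\lambda(f(M))=0\}$ (condition (iv)), or equivalently by $s_0(M)$ (condition (iii)). Concretely, for each of the hypotheses (a)--(c') I will exhibit, for every $x\neq y\in M$, a function $\varphi\in\Lip_0(M)$ with $\Lip(\varphi)\leq C$, $|\varphi(x)-\varphi(y)|\geq d(x,y)$, and $\lambda(\varphi(M))=0$, where $C$ depends only on the data of the hypothesis.

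\smallskip\noindent\textbf{Cases (a), (c), (c').} Here the witnesses are the 1-Lipschitz distance functions $\varphi_p(z):=d(p,z)-d(p,0)$, which lie in $\Lip_0(M)$ and satisfy $|\varphi_p(p)-\varphi_p(q)|=d(p,q)$. Under (a), Lipschitz functions map $\mathcal H^1$-null sets to $\mathcal H^1$-null, hence $\lambda$-null, subsets of $\R$, so $\lambda(\varphi_p(M))=0$. Under (c), the image $\varphi_p(M)$ is a translate of $\{d(p,z):z\in M\}$, which is $\lambda$-null by hypothesis. Hypothesis (c') is exactly condition (iv) of Theorem~\ref{t:CharCptTotDiscoLLI}, so there is nothing further to prove.

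\smallskip\noindent\textbf{Cases (b), (b').} Given $x\neq y$, set $s=d(x,y)$ and apply the hypothesis at scale $\varepsilon=s/3$, obtaining a cover $\{P_i\}$ of $M$ (balls $B(x_i,r)$ in (b), closed sets $E_i$ in (b')) with $r\leq s/3$ whose $\rho r$-inflates are pairwise disjoint. Each piece has diameter at most $2r<s$ in (b) and at most $r<s$ in (b'), so $x$ and $y$ lie in distinct pieces $P_x\neq P_y$. The disjointness of the inflates forces, for $i\neq j$, $d(P_i,P_j)\geq 2(\rho-1)r$ in (b) and $d(P_i,P_j)\geq \rho r$ in (b'); in particular each $P_i$ is clopen in $M$. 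Setting $\alpha$ equal to this gap, the function $\varphi:=\alpha\,(\mathbf 1_{P_y}-\mathbf 1_{P_y}(0))$ is $1$-Lipschitz, belongs to $s_0(M)$ (hence has finite range, so $\lambda(\varphi(M))=0$), and satisfies $|\varphi(x)-\varphi(y)|=\alpha$.

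\smallskip\noindent The main technical obstacle is the quantitative comparison of $\alpha$ with $s$: since the hypotheses (b), (b') only assert $r\leq\varepsilon$, one needs the covering scale $r$ to in fact be \emph{comparable} to $\varepsilon$. This is the substantive content in the targeted examples (ultrametric or Cantor-type constructions, where admissible scales form a geometric sequence), and under this reading the ratio $\alpha/s$ is bounded below by a constant depending only on $\rho$. Once uniform separation is established, Theorem~\ref{t:CharCptTotDiscoLLI} delivers both the Lip-lin injectivity of $M$ and, as a by-product, its total disconnectedness.
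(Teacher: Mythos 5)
Your treatment of (a), (c) and (c') is correct and coincides with the paper's: (c') is literally condition (iv) of Theorem~\ref{t:CharCptTotDiscoLLI}, and the distance functions give (a) $\implies$ (c) $\implies$ (c'). The problem is in (b) and (b'), and it is exactly the one you flagged and then waved away. The function $\varphi=\alpha(\mathbf 1_{P_y}-\mathbf 1_{P_y}(0))$ can only be $1$-Lipschitz if $\alpha$ is at most the gap between $P_y$ and the other pieces, i.e.\ $\alpha$ is of order $\rho r$. The hypotheses only assert the existence of \emph{some} admissible scale $r\leq\varepsilon$, with no lower bound, so $\alpha/d(x,y)$ need not be bounded below and uniform separation does not follow. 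Your proposed repair --- reading the hypothesis as ``$r$ comparable to $\varepsilon$'' --- is a genuine strengthening of (b) and (b') as stated, not a reading of them, so this step is a real gap, not a technicality.

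The missing idea is to stop trying to separate $x$ from $y$ with the indicator of a single piece and instead transport an \emph{arbitrary} Lipschitz function through the decomposition. Pick a point $x_i$ in each $E_i$ and define the retraction $r_n(x)=x_i$ for $x\in E_i$. For $x\in E_i$, $y\in E_j$ with $i\neq j$ one has $d(x,y)\geq\rho r$ and $d(r_n(x),r_n(y))\leq d(x,y)+2r$, whence $\Lip(r_n)\leq 1+2/\rho$ \emph{independently of how small $r$ is}. Then for any $\varphi\in\Lip_0(M)$ (e.g.\ $\varphi(z)=d(z,x)-d(0,x)$, which realises the full separation $d(x,y)$), the functions $\varphi\circ r_n$ are finitely valued, hence in $s_0(M)$, have Lipschitz constant at most $(1+2/\rho)\norm{\varphi}_L$, and converge pointwise to $\varphi$ since $d(r_n(z),z)\leq r\leq\varepsilon_n\to 0$. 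This gives uniform separation by $s_0(M)$ with constant $1+2/\rho$, hence (c'), and Theorem~\ref{t:CharCptTotDiscoLLI} concludes as you intended. Note that the separation achieved this way is the full $d(x,y)$ (up to the factor $1+2/\rho$ in the norm), because it is inherited from $\varphi$ rather than from the size of the gaps in the cover.
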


\begin{proof}
The condition (c') is the same as condition (iv) in Theorem~\ref{t:CharCptTotDiscoLLI}.
We have (a) $\implies$ (c) $\implies$ (c') and (b) $\implies$ (b') $\implies$ (c').
This is clear for the first chain and for (b) $\implies$ (b'). 
In order to prove (b') $\implies$ (c') we let $(\varepsilon_n)_n \subset \R_+$ be decreasing to 0. 
Let us fix $n$ and let $E_1,\ldots, E_m$ and $0<r\leq\varepsilon_n$ correspond to $\varepsilon_n$. 
Choose arbitrary $x_i \in E_i$. 
We construct the retraction $r_n\colon M \to M$ by $r_n(x)=x_i$ if and only if $x \in  E_i$.
Let $x\in E_i$ and $y\in E_j$.
Then $d(x,y)\geq \rho r$ and
\[
\begin{aligned}
\frac{d(r_n(x),r_n(y))}{d(x,y)}&\leq \frac{d(x,y)+2r}{d(x,y)} 
\leq 1+\frac{2r}{\rho r}=1+\frac{2}{\rho}.
\end{aligned}
\]
Thus, we have $\norm{r_n}_{L}\leq 1+\frac{2}{\rho}$.
It follows that for every $\varphi \in \Lip_0(M)$ we have $\varphi \circ r_n \to \varphi$ pointwise and so $s_0(M)$ separates points of $M$ uniformly.
A fortiori, (c') is satisfied.
\end{proof}

\begin{remark}\label{r:CantorDust}
The above conditions (a), (b) and (c) have been chosen since they can be (relatively) easily verified. 
Conditions (b') and (c') are counterparts of (b) and (c), respectively, that are moreover invariant under Lipschitz isomorphisms. This can be easily checked.
On the other hand, we claim that neither (b) nor (c) are invariant under Lipschitz isomorphisms\footnote{Still, (b) is invariant under Lipschitz isomorphisms with sufficiently small distortion.}. This shows that (c') does not imply (c), and (b') does not imply (b). 

To see our claim for (c), consider the Cantor dust $D=C\times C$ 
(here $C$ stands for the middle-third Cantor set). 
If $D$ is equipped with the $\ell_1$-metric, then it is well known that $\set{\norm{x-0}_1:x \in D}=C+C=[0,2]$ which has Lebesgue measure 2. On the other hand, if $D$ is equipped with the $\ell_\infty$-metric then $D$ satisfies (c).

To see our claim for (b), consider the following modification of the Cantor dust. Let $M=\bigcap M_n$ where \[M_1=([0,\frac13] \cup [\frac23,1])\times ([0,\frac13] \cup [\frac23,1]) \cup \set{\frac12}\times \set{\frac16,\frac56} \cup \set{\frac16,\frac56}\times \set{\frac12}\] and $M_{n+1}$ is obtained from $M_n$ by replacing every square (product of intervals) by an appropriately scaled down copy of $M_1$.
It can be verified that when $M$ is equipped with the $\ell_1$-metric then (b) fails, while when it is equipped with the $\ell_\infty$-metric then (b) is satisfied.

Also, $(D,\norm{\cdot}_{1})$ shows that (b) does not imply (c).
That (a) does not imply (b') can be seen on the following example. Let $M=\set{0}\cup \set{\frac1n:n\in \N} \subset \R$ and let us assume that $M$ satisfies (b') with some $\rho$. Let $\varepsilon>0$, $r\leq \varepsilon$ and $E_1,\ldots,E_m$ as in (b'). Without loss of generality $0 \in E_1$. Let $n$ be minimal such that $\frac1n\in E_1$. Notice that $n\geq \frac1\varepsilon$. Then $d(0,\frac1n)\leq r$ and $d(\frac1n,\frac1{n-1})>\rho r$. So $n-1=\frac{d(0,\frac1n)}{d(\frac1n,\frac1{n-1})}\leq\frac1\rho$. Since $\varepsilon$ was arbitrary, this leads to a contradiction.

Finally, notice that (c') is satisfied for example by $\mathcal{H}^1-\sigma$-finite purely 1-unrectifiable metric spaces by a theorem of Choquet \cite{Choquet} (see also~\cite[page 3554]{AGPP21}) and the fact that $\lip_0(M)$ separates points uniformly in p1u spaces \cite[Lemma 3.4]{Bate_2020} (see also \cite[Theorem~A]{AGPP21}).
\end{remark}

\begin{remark} 
    The condition (b) comes from the paper by Godefroy and Ozawa~\cite[Proposition 6]{GodOza} where they prove that if $M$ satisfies (b), then $\Free(M)$ has the metric approximation property (MAP).
    Following that proof, one can see that (b') also implies the MAP.
    It has been pointed to us by the anonymous referee that if $M$ satisfies (b'), then $\F(M)$ has even a finite dimensional decomposition (FDD). Indeed, let $r_n : M \to M$ be the Lipschitz retractions defined in the proof of Corollary~\ref{c:compact-liplin-metric}. Then, similarly as in the proof of~\cite[Proposition~6]{GodOza}, $\widehat{r_n} : \F(M) \to \F(M)$ are finite rank projections converging strongly to the identity on $\F(M)$. In other words, $\F(M)$ has the $\pi$-property. Moreover, the subspace $\lip_0(M) \subset \Lip_0(M)$ of locally flat Lipschitz maps separates points uniformly (simply because $s_0(M) \subset \lip_0(M)$). Therefore $\F(M) = \lip_0(M)^*$ (this is again an application of the theorem of Petunin and Plichko~\cite{Petunin}; see also \cite{Dalet} for more details). Now Grothendieck’s theorem shows that $\F(M)$ has the MAP since it is a separable dual with the bounded approximation property (BAP). In conclusion, it follows from \cite[Theorem~6.4 (3)]{Casazza} that $\F(M)$ has a FDD.
\end{remark}

\subsection{The case of uniformly discrete metric spaces}

We are now going to show that every uniformly discrete metric space is Lip-lin injective. Let us recall that $M$ is uniformly discrete if there exists $\theta>0$ such that $d(x,y) > \theta$ whenever $x \neq y$. If $M$ is moreover bounded, then $\F(M)$ is readily seen to be isomorphic to $\ell_1(M \setminus \{0\})$ through the linear map $\delta(x) \mapsto e_{x}$ (see \cite[Proposition~4.4]{Kalton04}).

\begin{lemma}\label{l:ACseries} Let $M$ be any metric space. Assume that $\mu=\sum_{n=1}^\infty a_n\delta(x_n)=\sum_{n=1}^\infty b_n\delta(y_n)$ in $\mathcal F(M)$, where $(a_n), (b_n)\in \ell_1$, $a_n,b_n\neq 0$ for all $n$, and $0\neq x_n\neq x_m$, $0\neq y_n\neq y_m$ if $n\neq m$. Then there is a permutation $\sigma\colon \mathbb N\to\mathbb N$ such that $a_n=b_{\sigma(n)}$ and $x_n=y_{\sigma(n)}$ for all $n$.
\end{lemma}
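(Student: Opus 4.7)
The plan is to recover each coefficient $a_n$ from $\mu$ by testing against a sequence of ``tent'' functions concentrated near $x_n$, and then to match the resulting value on the other representation of $\mu$.

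Fix $n \geq 1$ and set $p := x_n$, which is nonzero by hypothesis. For every $0 < r < d(0,p)$ I would introduce the tent function
\[
\phi_r(z) := \max\set{0,\, 1 - d(z,p)/r}.
\]
Then $\phi_r \in \Lip_0(M)$ with $\norm{\phi_r}_L \leq 1/r$, one has $0 \leq \phi_r \leq 1$, $\phi_r(p) = 1$, and $\phi_r$ vanishes outside $B(p,r)$. Since $\Lip_0(M) = \mathcal F(M)^*$ and both series converge in $\mathcal F(M)$, evaluating $\phi_r$ at $\mu$ yields two absolutely convergent numerical series,
\[
\sum_{m=1}^\infty a_m \phi_r(x_m) \;=\; \langle \phi_r, \mu\rangle \;=\; \sum_{m=1}^\infty b_m \phi_r(y_m).
\]

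Next I let $r \to 0^+$. For each fixed $m$, $\phi_r(x_m) = 1$ if $x_m = p$, while $\phi_r(x_m) = 0$ as soon as $r < d(x_m,p)$; hence $\phi_r(x_m) \to \indicator{\set{x_m = p}}$. Combined with the pointwise bound $\abs{a_m \phi_r(x_m)} \leq \abs{a_m}$ and $(a_m) \in \ell_1$, dominated convergence for series gives $\lim_{r \to 0^+} \langle \phi_r, \mu\rangle = a_n$, since by distinctness only the index $n$ contributes on the left. Applying the same argument to the right-hand sum shows that this common limit equals $b_m$ whenever there exists an (automatically unique) index $m$ with $y_m = p$, and equals $0$ otherwise. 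Because $a_n \neq 0$, the second case is ruled out, so there is a unique $\sigma(n) \in \mathbb{N}$ with $x_n = y_{\sigma(n)}$ and $a_n = b_{\sigma(n)}$.

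The symmetric argument, where the roles of the two expansions are interchanged, supplies an inverse: for every $k$ there is a unique $m$ with $y_k = x_m$ and $b_k = a_m$. Consequently $\sigma$ is a bijection of $\mathbb N$, which completes the proof. No real obstacle is expected; the only delicate point is the interchange of limit and summation, which is handled by the $\ell_1$-dominated convergence above.
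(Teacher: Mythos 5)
Your proof is correct and follows essentially the same route as the paper's: both test $\mu$ against the tent functions $z\mapsto\max\{0,1-d(z,x_n)/r\}$ and let $r\to 0^+$, using the $\ell_1$ bound on the coefficients to justify passing to the limit term by term and thereby recover each coefficient. The only cosmetic difference is that you make explicit the restriction $r<d(0,x_n)$ so the test function lies in $\Lip_0(M)$, and you phrase the limit as dominated convergence where the paper bounds the tail sum directly; these are the same estimate.
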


In the language of Fremlin and Sersouri~\cite{FS}, this means that the family $\delta(M\setminus \set{0})$ is $\ell_1$-independent. We wish to highlight that this Lemma can be derived from \cite[Proposition~4.9]{AP21} in straightforward manner, but we choose to include a direct self-contained proof below.

\begin{proof}
	Consider $\mu=\sum_{n=1}^\infty a_n \delta(x_n)$. Given $x\in M$ and $\varepsilon>0$, we consider the Lipschitz function given by $f_{x,\varepsilon}(t)=\max\{1-\frac{d(t,x)}{\varepsilon},0\}$. If $x\notin\{x_n:n\in \mathbb N\}$ then
	\[|\langle f_{x,\varepsilon},\mu \rangle| \leq \sum_{d(x_n,x)<\varepsilon} |a_n| \stackrel{\varepsilon\to 0^+}{\longrightarrow} 0.\] 
	On the other hand, if $x=x_{n_0}$ for some $n_0\in \mathbb N$, then
	\[|\langle f_{x,\varepsilon}, \mu \rangle- a_{n_0}| =\Big|\sum_{n\neq n_0} a_n f_{x,\varepsilon}(x_n)\Big|\leq \sum_{\substack{ n\neq n_0\\
        d(x_n,x)<\varepsilon}} 
        |a_n|\stackrel{\varepsilon\to 0^+}{\longrightarrow} 0 \]
	That is, $\lim_{\varepsilon\to 0^+} \langle f_{x,\varepsilon}, \mu\rangle = a_{n_0}$ if $x=x_{n_0}$ for some $n_0$ and $0$ otherwise. The same argument yields $\lim_{\varepsilon\to 0^+} \langle f_{x,\varepsilon}, \mu\rangle = b_{n_0}$ if $x=y_{n_0}$ for some $n_0$ and $0$ otherwise, and the conclusion follows.
\end{proof}

\begin{corollary}\label{cor:UDsupp} 
Let $M$ be any metric space. Let $f \in \Lip_0(M,N)$ be injective and $\mu\in \mathcal F(M)$ such that $\mu=\sum_{n=1}^\infty a_n \delta(x_n)$ for some $(a_n)\in \ell_1$. Then $\supp{\hat{f}(\mu)}=\overline{f(\supp(\mu))}$.\\
In particular, if $\supp(\mu)$ is uniformly discrete and bounded then $f$ preserves the support of $\mu$. 
\end{corollary}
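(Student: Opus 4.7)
The plan is to compute both sides of the claimed equality $\supp(\hat{f}(\mu))=\overline{f(\supp(\mu))}$ explicitly. First I would clean up the representation of $\mu$: by discarding zero coefficients, dropping any term $a_n\delta(0)$ (which vanishes in $\Free(M)$), and grouping repeated points, I may assume without loss of generality that the $x_n$ are pairwise distinct and different from $0$, and all $a_n$ are nonzero. In view of Proposition~\ref{preservsupplemma}, only the inclusion $\overline{f(\supp(\mu))}\subset\supp(\hat{f}(\mu))$ requires work.

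The main step is to establish $\supp(\mu)=\overline{\{x_n:n\in\N\}}$, and the argument is a direct re-reading of the computation in the proof of Lemma~\ref{l:ACseries}. For the inclusion $\overline{\{x_n\}}\subset\supp(\mu)$, given a neighbourhood $U$ of $x_{n_0}$, the hook function $f_{x_{n_0},\varepsilon}$ from that proof has support inside $U$ for $\varepsilon$ small enough and satisfies $\langle f_{x_{n_0},\varepsilon},\mu\rangle\to a_{n_0}\neq 0$; therefore $x_{n_0}\in\supp(\mu)$, and taking closure gives the inclusion. Conversely, if $x\notin\overline{\{x_n\}}$, choose a neighbourhood $U$ of $x$ disjoint from $\{x_n\}$; any $g\in\Lip_0(M)$ with $\supp(g)\subset U$ satisfies $g(x_n)=0$ for every $n$, so $\langle g,\mu\rangle=\sum a_n g(x_n)=0$, whence $x\notin\supp(\mu)$.

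Next, continuity of $\hat{f}$ yields $\hat{f}(\mu)=\sum_n a_n\delta(f(x_n))$ in $\F(N)$, and since $f$ is injective with $f(0)=0$, the points $f(x_n)$ are again pairwise distinct and nonzero. Re-running the argument of the previous paragraph inside $\F(N)$ then gives $\supp(\hat{f}(\mu))=\overline{\{f(x_n):n\in\N\}}$. Finally, continuity of $f$ entails $f(\overline{\{x_n\}})\subset\overline{\{f(x_n)\}}$, so
\[
\overline{f(\supp(\mu))}=\overline{f(\overline{\{x_n\}})}=\overline{\{f(x_n)\}}=\supp(\hat{f}(\mu)),
\]
which is precisely the desired identity.

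For the \emph{in particular} clause, if $\supp(\mu)$ is uniformly discrete and bounded then $0\notin\supp(\mu)$ (otherwise $0$ would be isolated in $\supp(\mu)$, contradicting the fact that $0$ is never isolated in a support); hence $d(0,\supp(\mu))>0$ and $\supp(\mu)\cup\{0\}$ is itself uniformly discrete and bounded. By Kalton's isomorphism $\F(\supp(\mu)\cup\{0\})\equiv\ell_1(\supp(\mu))$ (\cite[Proposition~4.4]{Kalton04}), the element $\mu$ automatically admits the absolutely convergent $\ell_1$-series representation required by the first part, which concludes. I do not foresee any real obstacle beyond the bookkeeping in the reduction to canonical form; the content is essentially a careful re-application of Lemma~\ref{l:ACseries} on both sides of $\hat{f}$.
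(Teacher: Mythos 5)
Your argument is correct and follows essentially the same route as the paper: reduce to a representation with distinct nonzero points and nonzero coefficients, use the hook functions from the proof of Lemma~\ref{l:ACseries} to identify $\supp(\mu)=\overline{\{x_n\}}$ and likewise $\supp(\hat{f}(\mu))=\overline{\{f(x_n)\}}$ (injectivity of $f$ keeping the points $f(x_n)$ distinct and nonzero), and then conclude. Your treatment of the \emph{in particular} clause via $0\notin\supp(\mu)$ and the identification $\F(\supp(\mu)\cup\{0\})\simeq\ell_1(\supp(\mu))$ is exactly the intended (implicit) argument.
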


\begin{proof}
Let $\mu=\sum_{n=1}^\infty a_n \delta(x_n)$ for certain $x_n\in M$ and $a_n\neq 0$. By Lemma~\ref{l:ACseries}, we have $\supp(\mu)=\overline{\set{x_n}}$. Indeed, the inclusion ``$\subset$'' is trivial and the inclusion ``$\supset$'' follows from the proof of the case $x=x_{n_0}$ and the closedness of support. The conclusion follows. 
\end{proof}

\begin{corollary}\label{cor:UnifDisInj} 
If $M$ is a uniformly discrete metric space then $M$ is Lip-lin injective. 
\end{corollary}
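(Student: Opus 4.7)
The plan is to reduce the injectivity of $\widehat{f}$ to the weak$^*$-density of $C_f(\Lip_0(N))$ in $\Lip_0(M)$, via Fact~\ref{lemma:separating}. Let $\theta>0$ satisfy $d(x,y)\geq \theta$ for all distinct $x,y\in M$. The crucial simplification provided by uniform discreteness is that, for every $A\subset M\setminus\{0\}$, the indicator $\mathbf{1}_A$ lies in $\Lip_0(M)$ with $\lipnorm{\mathbf{1}_A}\leq 1/\theta$. My strategy is to show that $\mathbf{1}_{\{x\}}$ lies in $\wscl{C_f(\Lip_0(N))}$ for every $x\in M\setminus\{0\}$, and independently that the family $\{\mathbf{1}_{\{x\}}:x\in M\setminus\{0\}\}$ is weak$^*$-total in $\Lip_0(M)$.

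For the first point, I would fix $x\in M\setminus\{0\}$ and consider the hat functions
\[ g_\alpha(t):=\max\{0,\,1-\alpha\, d_N(t,f(x))\}, \]
which belong to $\Lip_0(N)$ whenever $\alpha > 1/d_N(0_N,f(x))$ (well-defined since $f(x)\neq 0_N$ by injectivity). Injectivity of $f$ gives pointwise convergence $g_\alpha\circ f \to \mathbf{1}_{\{x\}}$ on $M$ as $\alpha\to\infty$; uniform discreteness forces $\lipnorm{g_\alpha\circ f}\leq 2/\theta$ uniformly in $\alpha$, so this pointwise convergence is in fact weak$^*$.

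For the second point, I would suppose $\mu\in\F(M)$ satisfies $\langle \mathbf{1}_{\{x\}},\mu\rangle=0$ for every $x\in M\setminus\{0\}$ and bootstrap in three stages. First, for arbitrary $A\subset M\setminus\{0\}$, the approximation $\mathbf{1}_{A\cap F}\to \mathbf{1}_A$ over finite $F\nearrow M$ is uniformly bounded in Lipschitz norm by $1/\theta$ and pointwise convergent, hence weak$^*$, so $\langle \mathbf{1}_A,\mu\rangle=0$. Second, for bounded $h\in\Lip_0(M)$, I would decompose $h=h^+-h^-$ and use the layer-cake
\[ h^\pm(y)=\int_0^{\norm{h}_\infty}\mathbf{1}_{\{h^\pm>t\}}(y)\,dt; \]
I would first verify the identity $\langle h^\pm,\mu_k\rangle=\int_0^{\norm{h}_\infty}\langle \mathbf{1}_{\{h^\pm>t\}},\mu_k\rangle\,dt$ for finitely supported $\mu_k\to\mu$ by plain Fubini, and then pass to the limit via dominated convergence on the compact integration interval, so that the first stage yields $\langle h^\pm,\mu\rangle=0$. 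Third, for general $h\in\Lip_0(M)$, the truncations $h_n:=(-n)\vee h\wedge n$ are bounded Lipschitz with $\lipnorm{h_n}\leq \lipnorm{h}$ and converge pointwise (hence weak$^*$) to $h$, yielding $\langle h,\mu\rangle=\lim_n\langle h_n,\mu\rangle=0$. Thus $\mu=0$. Combined with the first point, for any $\mu\in\ker\widehat{f}$ and $x\in M\setminus\{0\}$ one has $\langle \mathbf{1}_{\{x\}},\mu\rangle=\lim_\alpha \langle g_\alpha,\widehat{f}(\mu)\rangle=0$, hence $\mu=0$.

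The hard part will be the layer-cake step above: interchanging the integral with the pairing with $\mu$ is not immediate at the level of $\F(M)$, which is why I would reduce first to finitely supported approximations and only then invoke dominated convergence on the compact interval $[0,\norm{h}_\infty]$. A tempting shortcut via Corollary~\ref{cor:UDsupp} applied directly to $\mu$ is unavailable in the unbounded setting, because the natural expansion $\sum_x c_x(\mu)\delta(x)$ (with $c_x(\mu):=\langle \mathbf{1}_{\{x\}},\mu\rangle$) need not converge in $\F(M)$ when $\supp\mu$ is unbounded, so $\mu$ may fail to admit any representation $\sum a_n\delta(x_n)$ with distinct $x_n\in M\setminus\{0\}$ and $(a_n)\in\ell_1$.
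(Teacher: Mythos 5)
Your proposal is correct, but it follows a genuinely different route from the paper. The paper first reduces to the bounded case by replacing the metric on $M$ with $\rho=\min\{1,d\}$, where every element of $\F(M,\rho)$ is an absolutely convergent series $\sum a_n\delta(x_n)$ with $(a_n)\in\ell_1$; it then invokes the $\ell_1$-independence of the Dirac measures (Lemma~\ref{l:ACseries} via Corollary~\ref{cor:UDsupp}) to conclude that $f_\rho$ preserves supports, and transfers back through the composition machinery of Corollary~\ref{l:BddArrivalReduction}. You instead work directly with the duality criterion of Fact~\ref{lemma:separating} and never change the metric: ingredient one shows each singleton indicator $\mathbf{1}_{\{x\}}$ is a bounded pointwise (hence weak$^*$) limit of the functions $g_\alpha\circ f\in C_f(\Lip_0(N))$ --- this is where injectivity of $f$ and uniform discreteness enter, and it is essentially the same hat-function device as in the proof of Lemma~\ref{l:ACseries}; ingredient two, the genuinely new step, shows that the singleton indicators already separate the points of $\F(M)$, via the bootstrap from singletons to arbitrary indicators, then to bounded Lipschitz functions by the layer-cake formula (with the interchange justified on finitely supported elements and passed to the limit), then to all of $\Lip_0(M)$ by truncation. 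This second ingredient is precisely what replaces the $\ell_1$-representation of elements of $\F(M)$, which as you correctly note is not available for unbounded uniformly discrete $M$, and it is the same obstruction the paper circumvents with the metric truncation. The trade-off: your argument is self-contained and avoids both the support machinery of Section~\ref{section-PS} and the identification $\F(M,\rho)\cong\ell_1$, whereas the paper's argument yields the formally stronger conclusion that $f$ preserves supports and reuses Lemma~\ref{l:ACseries}, which is needed elsewhere. All steps of your argument check out (the uniform bound $\lipnorm{g_\alpha\circ f}\le 1/\theta$, the fact that $\{h^{\pm}>t\}$ avoids the base point, and the uniform-in-$t$ convergence $\langle\mathbf{1}_{\{h^\pm>t\}},\mu_k\rangle\to\langle\mathbf{1}_{\{h^\pm>t\}},\mu\rangle$ that legitimises the passage to the limit under the integral).
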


\begin{proof} 
Assume that $M$ is uniformly discrete. Let $N$ be any metric space and $f \colon M \to N$ be any injective Lipschitz map vanishing at 0. We may and do assume that $\diam(N,d)\leq 1$
thanks to Lemma~\ref{l:bounded-reduction}. Also, exactly as in Corollary~\ref{l:BddArrivalReduction}, we consider the metric $\rho$ on $M$ given by $\rho(x,y)=\min\{1,d(x,y)\}$. Then clearly $(M , \rho)$ is uniformly discrete and bounded. Moreover we can factor $f$ through $(M,\rho)$ as $f=f_{\rho}\circ Id_M$ where $Id_M \colon x \in (M,d) \mapsto x \in (M,\rho)$ and $f_{\rho} \colon (M,\rho)\to N$ is defined by $f_{\rho}(x)=f(x)$. Now by Corollary~\ref{cor:UDsupp}, we know that $f_{\rho}$ preserves supports. Therefore, since $(N,d)=(N,\rho)$, Corollary~\ref{l:BddArrivalReduction}~$(c)$ implies that $f$ preserves supports as well.  
\end{proof}

\section{Counter-examples and the transfer method}
\label{section5}

The main goal of the current section is to provide examples of complete metric spaces $M$ which are not Lip-lin injective. That is, complete metric spaces $M$ for which there is a metric space $N$ and an injective Lipschitz map $f \colon M \to N$ such that $\widehat{f} \colon \F(M) \to \F(N)$ is not injective.
That such spaces exist follows for example from Theorem~\ref{t:CharCptTotDiscoLLI}.
Indeed, let $M$ be a compact totally disconnected subset of $\R$ with $\lambda(M)>0$.
Then by~\cite[Corollary~3.4]{Godard}, $\Free(M)$ contains an isometric copy of $L_1$. In particular, $\Free(M)$ is not a dual and, by the theorem of Petun{\={\i}}n and Pl{\={\i}}{\v{c}}ko
\cite{Petunin}, the space $\lip_0(M)$
does not separate points of $\Free(M)$ (again, see \cite{Dalet} for more details). A fortiori, $s_0(M)$ (which is a subspace of $\lip_0(M)$) does not separate points of $M$ and so $M$ is not Lip-lin injective.

In order to construct ``much smaller'' metric spaces which are not Lip-lin injective by means of a transfer method, we need a more tangible description of the above example.
For the sake of clarity, we will begin with a simple example, namely $M = [0,1]$, and then we will develop further the main idea to obtain examples of a different metric nature. 
We will need the following description of the linearization of $f$ in the particular case of subsets of $\R$.

\begin{lemma}\label{lemma:L1}
 Let $f\colon [0,1] \to [0,1]$ be an injective Lipschitz map with $f(0)=0$.
 Let $\Phi\colon \Free([0,1]) \to L_1[0,1]$ be the usual isometric isomorphism (see Example~\ref{exam:IsomR}).
 Then for every $\varphi \in L_1[0,1]$ we have $\Phi\circ \widehat{f}\circ \Phi^{-1}(\varphi)=\varphi \circ f^{-1}$.
\[ \begin{tikzcd}
\Free([0,1]) \arrow{r}{\widehat{f}}  & \Free({f([0,1])}) \arrow{d}{\Phi} \\
L_1[0,1] \arrow{u}{\Phi^{-1}} \arrow{r}{\cdot \circ f^{-1}}& L_1({f([0,1])})
\end{tikzcd}
\]
\end{lemma}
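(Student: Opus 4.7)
The plan is to verify the identity on generators $\delta(x)$, extend it by linearity to a dense subspace of $L_1[0,1]$, and then conclude by continuity.

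First I would make a preliminary reduction: since $f\colon [0,1]\to [0,1]$ is a continuous injection on an interval, it is monotone; together with $f(0)=0$ and $f\geq 0$, this forces $f$ to be strictly increasing, so $f$ is a homeomorphism onto $f([0,1]) = [0,f(1)]$, and $f^{-1}\colon [0,f(1)]\to [0,1]$ is continuous and increasing. Consequently, on the image interval, the isomorphism $\Phi$ still sends $\delta(y)$ to $\mathbf{1}_{[0,y]}$ for $y\in [0,f(1)]$.

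Next I would check the formula on a single generator $\delta(x)$, $x\in[0,1]$. On the one hand,
\[
\Phi\circ\widehat{f}\circ\Phi^{-1}(\mathbf{1}_{[0,x]}) = \Phi(\widehat{f}(\delta(x))) = \Phi(\delta(f(x))) = \mathbf{1}_{[0,f(x)]}.
\]
On the other hand, for $y \in f([0,1])$, monotonicity gives $\mathbf{1}_{[0,x]}(f^{-1}(y)) = 1$ iff $f^{-1}(y)\leq x$ iff $y\leq f(x)$, so $\mathbf{1}_{[0,x]}\circ f^{-1} = \mathbf{1}_{[0,f(x)]}$ as well. By linearity, the identity then holds on the subspace $V=\mathrm{span}\{\mathbf{1}_{[0,x]}:x\in [0,1]\}$. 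Since $\mathbf{1}_{[a,b]} = \mathbf{1}_{[0,b]}-\mathbf{1}_{[0,a]}$, the subspace $V$ contains all step functions, hence is dense in $L_1[0,1]$.

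It remains to show that both sides define bounded linear maps $L_1[0,1]\to L_1(f([0,1]))$. The left-hand side is plainly bounded since $\widehat{f}$ is bounded and $\Phi,\Phi^{-1}$ are isometries. For the right-hand side, $f$ Lipschitz implies that $f$ sends Lebesgue-null sets to Lebesgue-null sets, so for any representative of $\varphi\in L_1[0,1]$ the function $\varphi\circ f^{-1}$ is well defined in $L_1(f([0,1]))$; moreover, the change of variables formula (valid since $f$ is absolutely continuous, being Lipschitz) gives
\[
\int_{f([0,1])} \abs{\varphi\circ f^{-1}(y)}\,dy = \int_0^1 \abs{\varphi(x)} f'(x)\,dx \leq \Lip(f)\,\norm{\varphi}_{L_1}.
\]
Hence $\varphi\mapsto \varphi\circ f^{-1}$ is a bounded linear map. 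Since two continuous linear maps that agree on a dense subspace are equal, the identity extends to all of $L_1[0,1]$.

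The only mildly subtle point is the third step, where one must confirm that $\varphi\circ f^{-1}$ is well defined on $L_1$-equivalence classes and yields a bounded operator; both facts follow from the standard properties of Lipschitz maps (preservation of null sets and the area/change-of-variables formula). Everything else is essentially bookkeeping.
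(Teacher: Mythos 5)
Your proposal is correct and follows essentially the same route as the paper: both verify the identity on the evaluation functionals $\delta(x)$ (equivalently, on the indicators $\indicator{[0,x]}$), establish boundedness of $\varphi\mapsto\varphi\circ f^{-1}$ by the same change-of-variables estimate, and conclude by density/uniqueness. The paper merely phrases the final step as "by the uniqueness of $\widehat{f}$ it is enough to check on evaluation functionals," which is what your density-plus-continuity argument spells out.
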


\begin{proof}
First of all, notice that our assumptions imply that $f$ is increasing.
Let us denote $T\colon L_1[0,1] \to L_1({f([0,1])})$ the composition operator given by $T\varphi = \varphi \circ f^{-1}$. This is a bounded operator. 
 Indeed, 
 \[
  \int_{f([0,1])} \abs{\varphi\circ f^{-1}(t)}dt=\int_{[0,1]}\abs{\varphi\circ f^{-1}(f(t))} \; |f'(t)|dt\leq \norm{\varphi}_1\norm{f}_L.
 \]
 Further, we have $\widehat{f}=\Phi^{-1}\circ T\circ \Phi$ (by  the uniqueness of $\widehat{f}$ it is enough to check only on evaluation functionals $\delta(x)$, $x \in [0,1]$).
 So applying $\Phi$ from the right and $\Phi^{-1}$ from the left we get the desired result. 
\end{proof}

From now on, let $C \subset [0,1]$ be the Smith-Volterra-Cantor set. That is, the space constructed similarly as the middle-third Cantor set, but, at the $n$-th step of the construction  we remove subintervals of width $1/4^n$ from the middle of each of the $2^{n-1}$ remaining intervals. Therefore $C$ is a closed and totally disconnected subset of $[0,1]$ such that $\lambda(C) \in (0,1)$, $\min C  = 0$ and $\max C  = 1$ (in fact, any subset having these properties would work). 

\begin{example} \label{exam:NotLipLin}
\textit{There exists an injective Lipschitz map $f \colon [0,1] \to [0,1]$ such that $\widehat{f} \colon \F([0,1]) \to \F([0,1])$ is not injective.}
\smallskip

The above statement can easily be derived from \cite{Weaver2}. Indeed, letting $\rho$ be the metric on $[0,1]$ given by $\rho(x,y)=\lambda([x,y]\setminus C)$, it clear that
$f=Id\colon ([0,1],|\cdot|)\to([0,1],\rho)$ is $1$-Lipschitz and injective. Now it is proved in \cite[Example 2.30]{Weaver2} that $C_f$ is an into isometry. In particular its pre-adjoint $\hat{f}$ is onto (see \cite[Exercise~2.49]{FHHMZ}). Then, $\hat{f}$ is not injective because otherwise it would be an isomorphism, which would mean that $f$ is bi-Lipschitz according to Proposition~\ref{fchapinjbilip}, and this is excluded.
Since we will need a concrete representation of an element in $\ker \hat{f} \setminus \set{0}$ later, we provide a different proof below.

\begin{proof}
We define $f\colon ([0,1],|\cdot|) \to ([0,1],|\cdot|)$ as 
\[f(x)=\lambda([0,x]\setminus C)=\int_0^x \indicator{[0,1]\setminus C} (t)dt.\]
It is clear that $f$ is $1$-Lipschitz and non-decreasing. Moreover $f(0)=0$, $f(1)=1-\lambda(C)>0$ and $f$ is injective. Indeed, if $x<y$ then there exist $a<b$ in $(x,y)$ such that $[a,b] \cap C=\emptyset$. 
Thus $f(y)-f(x)=\lambda([x,y]\setminus C)\geq b-a>0$. 
So $f$ is injective.
Finally, a simple integration by substitution gives
\[ \lambda(f(C)) = \int_{f(C)} 1 dt = \int_{C} f'(x)dx = \int_{C} \indicator{[0,1]\setminus C}(x)dx =0 .\]
Now, let $T$ be as in Lemma~\ref{lemma:L1} above for this particular function $f$.
Notice that $0\neq \indicator{C} \in L_1[0,1]$ but we have $T\indicator{C}=\indicator{C} \circ f^{-1}=\indicator{f(C)}=0 \in L_1[0,1]$.
By Lemma~\ref{lemma:L1} it follows that $0\neq \Phi^{-1}(\indicator{C}) \in \ker \widehat{f}$.
\end{proof}
\end{example}

Notice that the non-zero vector in the kernel of $\widehat{f}$ defined above, namely $\Phi^{-1}(\indicator{C})$, can be written in a more explicit way. Indeed, if we let $(x_{2^{n-1}+k} \,, \, y_{2^{n-1}+k})$, $k=0,\ldots, 2^{n-1}-1$, be the subintervals which we remove at the $n$-th step of the construction of $C$, then 
$$\Phi^{-1}(\indicator{C}) = \delta(1)-\sum_{n=1}^\infty (\delta(y_n)-\delta(x_n)).$$
Observe that $\Phi^{-1}(\indicator{C}) \in \F(C)$. 
We subsequently deduce that $\widehat{f\restricted_C}$ is not injective. 
\medskip

Paraphrasing the above construction, we readily obtain the next result.

\begin{proposition}  \label{prop:SubsetR}
If $A \subset \R$ is such that $\lambda(A)>0$ then $A$ is not Lip-lin injective.\\
In particular, if $A \subset \R$ is compact,
 then $A$ is Lip-lin injective if and only if \linebreak$\lambda(A) = 0$.
\end{proposition}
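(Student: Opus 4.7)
The plan is to treat the two implications separately. For the ``if'' direction of the ``in particular'' statement: if $A$ is compact with $\lambda(A)=0$, then on the real line $\mathcal H^1(A)=\lambda(A)=0$, so Corollary~\ref{c:compact-liplin-metric}(a) immediately gives that $A$ is Lip-lin injective.

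For the main statement, suppose $\lambda(A)>0$. Since the inclusion of any subset of $\R$ into $A$ is an isometry (hence a bi-Lipschitz embedding), Lemma~\ref{l:heredity} reduces the task to exhibiting a set $C\subset A$ which is not Lip-lin injective. I would first construct such a $C$ as a compact nowhere dense subset of $A$ of positive Lebesgue measure. By inner regularity, pick a compact $K\subset A$ with $\lambda(K)>0$: if $K$ has empty interior in $\R$ (hence is nowhere dense) set $C=K$; otherwise the closed set $K$ contains a non-degenerate closed interval $[c,d]\subset A$, inside which I place a fat Cantor set of positive measure and call it $C$.

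With such $C$ in hand, I would then mimic the construction of Example~\ref{exam:NotLipLin}. Set $a=\min C$, $b=\max C$, take $a$ as base point, and define $f\colon C\to\R$ by $f(x)=\lambda([a,x]\setminus C)$. The map $f$ is $1$-Lipschitz, and for $x<y$ in $C$ the empty interior of $C$ gives $f(y)-f(x)=\lambda((x,y)\setminus C)>0$, so $f$ is injective. To produce $0\neq\mu\in\ker\widehat f$, extend $f$ to the Lipschitz map $\tilde f\colon[a,b]\to\R$ by the same formula, and write $[a,b]\setminus C=\bigsqcup_n(x_n,y_n)$ with $x_n,y_n\in C$. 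The element
\[
\mu:=\delta(b)-\sum_n\bigl(\delta(y_n)-\delta(x_n)\bigr)
\]
is a norm-convergent series in $\F(C)$ (because $\sum_n(y_n-x_n)\le b-a$), and via the isometry $\Phi$ of Example~\ref{exam:IsomR} applied to $\F([a,b])$ it corresponds to $\indicator{C}\in L^1([a,b])$; in particular $\mu\neq 0$. By the straightforward analogue on $[a,b]$ of Lemma~\ref{lemma:L1}, $\widehat{\tilde f}(\mu)$ corresponds to $\indicator{C}\circ\tilde f^{-1}=\indicator{\tilde f(C)}$, which vanishes in $L^1$ since a change of variables yields $\lambda(\tilde f(C))=\int_C\indicator{[a,b]\setminus C}\,d\lambda=0$. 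Since $\widehat f$ coincides with $\widehat{\tilde f}\restricted_{\F(C)}$, this shows $\mu\in\ker\widehat f\setminus\{0\}$, so $C$ is not Lip-lin injective and neither is $A$. The only delicate point, handled in the previous paragraph, is arranging for $C$ to have empty interior while keeping $\lambda(C)>0$: this is precisely what makes $f$ injective on $C$ and is the main obstacle to a one-line argument.
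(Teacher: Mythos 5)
Your proof is correct and follows essentially the same route as the paper: both reduce to a compact, nowhere dense (equivalently, totally disconnected) subset of positive measure, apply the measure-collapsing map $x\mapsto\lambda([a,x]\setminus C)$, and identify the kernel element $\Phi^{-1}(\indicator{C})$ via the $L^1$ picture of Lemma~\ref{lemma:L1}. The only cosmetic difference is that you restrict $f$ to $C$ and invoke the heredity Lemma~\ref{l:heredity}, whereas the paper extends $f$ to all of $A$ (by the identity on the negative part) and so does not need that lemma; both are valid.
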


\begin{proof}
The proof follows the same lines as Example~\ref{exam:NotLipLin}. We will only underline the main arguments, details are left to the reader. Since $\lambda(A)>0$, it contains a subset $K$ that is compact, totally disconnected and $\lambda(K)>0$. We may assume that $0=\min K$ and let us denote $b=\max K$. We pose $f(x)=\frac{b}{\lambda([0,b]\setminus K)}\lambda([0,x]\setminus K)$ if $x\geq 0$ and $f(x)=x$ if $x<0$. Then we
prove similarly that $f$ is 1-Lipschitz, injective and moreover $\lambda(f(K)) = 0$. Finally, if $T$ is the operator given by Lemma~\ref{lemma:L1} for this particular $f$, then we observe that $T(\indicator{K}) = \indicator{f(K)} = 0$. Therefore $T$ is non-injective, and so is $\widehat{f}$.

The second part of the statement now readily follows from Corollary~\ref{c:compact-liplin-metric}.
\end{proof}

As a direct consequence of the fact that ``Lip-lin injective'' is a hereditary property, we obtain the following corollary. 

\begin{corollary}\label{c:LipLinInjectiveImpliesp1u}
 If $M$ is Lip-lin injective then $M$ is purely 1-unrectifiable.
\end{corollary}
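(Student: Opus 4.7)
The plan is to prove the contrapositive: if $M$ is not purely 1-unrectifiable, then $M$ is not Lip-lin injective. This follows almost immediately by combining two results already established in the paper, namely Proposition~\ref{prop:SubsetR} and the heredity of Lip-lin injectivity under bi-Lipschitz embeddings (Lemma~\ref{l:heredity}).

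Concretely, I would first unfold the definition: if $M$ is not p1u, then by definition there exists $A\subset \R$ with $\lambda(A)>0$ such that $A$ bi-Lipschitz embeds into $M$. Then I would invoke Proposition~\ref{prop:SubsetR} to conclude that this set $A$ (viewed as a metric subspace of $\R$) is not Lip-lin injective. Finally, I would apply the contrapositive of Lemma~\ref{l:heredity}: since $A$ bi-Lipschitz embeds into $M$ but $A$ fails to be Lip-lin injective, $M$ itself cannot be Lip-lin injective.

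There is essentially no obstacle here — the entire argument amounts to chaining two previously established results. The only small technical points to keep in mind are (i) making sure that the completeness assumptions are consistent (the bi-Lipschitz image of a not-necessarily-closed $A\subset \R$ of positive measure still contains a compact subset of positive measure that bi-Lipschitz embeds into the complete space $M$, so one can always reduce to a complete $A$), and (ii) that Lemma~\ref{l:heredity} is applied contrapositively. Since the result is phrased as a one-line corollary, the proof write-up should be correspondingly short.
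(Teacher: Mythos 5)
Your proof is correct and follows exactly the paper's argument: take the contrapositive, extract a (closed) $A\subset\R$ with $\lambda(A)>0$ that bi-Lipschitz embeds into $M$, apply Proposition~\ref{prop:SubsetR} to see $A$ is not Lip-lin injective, and conclude via the heredity Lemma~\ref{l:heredity}. Your remark on reducing to a compact subset of positive measure is a reasonable way to handle the completeness issue the paper sidesteps by taking $A$ closed.
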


\begin{proof}
Assume that $M$ is not purely 1-unrectifiable. Then there exists a closed $A\subset \R$ such that $\lambda(A)>0$ and $A$ embeds bi-Lipschitz into $M$. Now Proposition~\ref{prop:SubsetR} and Lemma~\ref{l:heredity} yield the conclusion. 
\end{proof}

We already witnessed that $M$ being compact and totally disconnected is not sufficient to be Lip-lin injective (simply take $M = C$).
The next result shows in particular that adding moreover the assumption ``$M$ is purely 1-unrectifiable'' does not change that fact. Indeed, the following proposition applies for instance to the case of the ``snowflake metric'' $|\cdot|^{\alpha}$ ($0<\alpha <1$) which turns subsets of $[0,1]$ into purely 1-unrectifiable metric spaces.

\begin{proposition}\label{p:p1uNotSufficient} Assume that $\rho$ is a metric in $[0,1]$ such that $Id\colon ([0,1],\rho) \to ([0,1],\abs{\cdot})$ 
is $L$-Lipschitz 
and
$Id^{-1}\colon ([0,1],\abs{\cdot})\to([0,1],\rho)$ is continuous.\\
Then there exist a totally disconnected set $A \subset [0,1]$ and an injective Lipschitz map $f\colon (A,\rho) \to ([0,1],\abs{\cdot})$ such that $\hat{f}$ is not injective.
\end{proposition}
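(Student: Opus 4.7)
My plan is to imitate Example~\ref{exam:NotLipLin} inside $([0,1],\rho)$. In that example, the key ingredients were: a closed totally disconnected set $C\subset[0,1]$ of positive Lebesgue measure; the map $f(x)=\lambda([0,x]\setminus C)$; and the kernel element $\delta(1)-\sum_n(\delta(y_n)-\delta(x_n))$ where $(x_n,y_n)_n$ enumerates the removed intervals of $C$. That argument will carry over to the $\rho$-metric provided the series $\sum_n(\delta(y_n)-\delta(x_n))$ still converges in $\F(A,\rho)$, i.e. $\sum_n\rho(x_n,y_n)<\infty$. Achieving this while keeping $\lambda(A)>0$ is the main obstacle and dictates how I will construct $A$.

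Since $Id^{-1}\colon ([0,1],|\cdot|)\to([0,1],\rho)$ is continuous on a $|\cdot|$-compact set, it is uniformly continuous, so there is an increasing modulus $\omega\colon[0,\infty)\to[0,\infty)$ with $\omega(0^+)=0$ satisfying $\rho(x,y)\leq\omega(|x-y|)$ for all $x,y\in[0,1]$. I will define $A=\bigcap_n A_n$ inductively: put $A_0=[0,1]$, and obtain $A_n$ from $A_{n-1}$ by removing a short open interval from the middle of each of its $2^{n-1}$ components. The common length $\varepsilon_n$ of these removed intervals will be chosen so that simultaneously $2^{n-1}\varepsilon_n\leq 2^{-n-1}$ and $2^{n-1}\omega(\varepsilon_n)\leq 2^{-n}$; the second condition is attainable because $\omega(\varepsilon)\to 0$ as $\varepsilon\to 0^+$. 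The outcome is a compact, totally disconnected set $A\subset[0,1]$ containing $0$ and $1$, with $\lambda(A)\geq 1/2$, whose removed intervals $(x_n,y_n)_{n\in\N}$ satisfy both $\sum_n(y_n-x_n)<1$ and $\sum_n\rho(x_n,y_n)<\infty$.

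With $A$ in hand, I define $f\colon(A,\rho)\to([0,1],|\cdot|)$ by $f(x)=\lambda([0,x]\setminus A)$. The estimate $|f(x)-f(y)|\leq|x-y|\leq L\rho(x,y)$ shows that $f$ is $L$-Lipschitz, and injectivity follows from the fact that $A$, being totally disconnected in $\R$, has empty interior. I then set
\[\mu:=\delta(1)-\sum_n(\delta(y_n)-\delta(x_n))\in\F(A,\rho),\]
the series converging absolutely thanks to $\sum_n\rho(x_n,y_n)<\infty$. Using the isometry $\Phi\colon\F([0,1],|\cdot|)\to L_1[0,1]$ from Example~\ref{exam:IsomR}, I compute
\[\Phi(\widehat{f}(\mu))=\indicator{[0,f(1)]}-\sum_n\indicator{[f(x_n),f(y_n)]},\]
and this vanishes in $L_1[0,1]$ because the intervals $[f(x_n),f(y_n)]$ are pairwise disjoint, each of length $y_n-x_n$, and collectively cover $[0,f(1)]$ up to a null set (their total length being $f(1)=1-\lambda(A)$). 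Hence $\widehat{f}(\mu)=0$. Finally, to see $\mu\neq 0$, I apply $\widehat{Id}\colon\F(A,\rho)\to\F(A,|\cdot|)$, coming from the $L$-Lipschitz map $Id\colon(A,\rho)\to(A,|\cdot|)$. Its value on $\mu$ is $\delta(1)-\sum_n(\delta(y_n)-\delta(x_n))\in\F(A,|\cdot|)$ (the series still converges since $\sum(y_n-x_n)<\infty$), and under the embedding $\F(A,|\cdot|)\hookrightarrow\F([0,1],|\cdot|)\cong L_1[0,1]$ this maps to $\indicator{A}$, which is nonzero because $\lambda(A)>0$.
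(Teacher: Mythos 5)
Your proof is correct and follows essentially the same route as the paper's: a fat Cantor set whose removed intervals are calibrated against the modulus of continuity of $Id^{-1}$ so that their $\rho$-lengths are summable while the set keeps positive Lebesgue measure, the measure-removing map, and the explicit kernel element $\delta(1)-\sum_n(\delta(y_n)-\delta(x_n))$. The only cosmetic differences are that the paper factors $f$ through $Id\colon([0,1],\rho)\to([0,1],|\cdot|)$ and certifies $\mu\neq 0$ by the norm estimate $\norm{\mu}\geq\rho(1,0)-\sum_n\rho(x_n,y_n)>0$, whereas you define $f$ directly on $A$ and identify $\widehat{Id}(\mu)$ with $\indicator{A}\neq 0$ in $L_1$.
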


\begin{proof} 
By compactness, $Id^{-1}$ is uniformly continuous.
We define:
$$\omega(t)=\sup\{\rho(x,y): |x-y|\leq t\}.$$ 
By  hypothesis, $t/L\leq \omega(t)\stackrel{t\to 0^+}{\to}0$. 
Take $t_n>0$ with $\omega(t_n)\leq 4^{-n}\min\{1,L^{-1}\}$ and consider the Smith-Volterra-Cantor set $C_{\rho}\subset [0,1]$ obtained by removing $2^{n-1}$ intervals $(x_{2^{n-1}+k},y_{2^{n-1}+k})$, $k=0,\ldots, 2^{n-1}-1$, of length $t_n$ at the $n$-th stage of the usual construction of a Cantor set. Note that $0<\lambda(C_{\rho})<1$.
\smallskip

Now exactly as in Example~\ref{exam:NotLipLin} or as in Proposition~\ref{prop:SubsetR}, the map $h\colon ([0,1],\abs{\cdot}) \to ([0,1],\abs{\cdot})$ defined by 
\[\forall x \in [0,1], \quad h(x)=\lambda([0,x]\setminus C_\rho)=\int_0^x \indicator{[0,1]\setminus C_\rho} (t)dt,\]
is $1$-Lipschitz and injective. Moreover 
$$\gamma := \delta(1)-\sum_{n=1}^\infty \delta(y_n)-\delta(x_n) \in \ker \widehat{h}.$$

Next, define $f\colon([0,1], \rho)\to ([0,1],|\cdot|)$ by $f=h\circ Id$.
Consider $\mu =\delta(1)-\sum_{n=1}^\infty \delta(y_n)-\delta(x_n) \in \F([0,1],\rho)$. 
Note first that this is a non-zero well-defined element in $\mathcal F([0,1],\rho)$ since 
\[ \sum_{n=1}^\infty \norm{\delta(y_n)-\delta(x_n)}_{\mathcal F([0,1], \rho)}=\sum_{n=1}^\infty \rho(x_n,y_n)\leq\sum_{n=1}^\infty 2^{n-1} \omega(t_n)\leq \sum_{n=1}^\infty \frac{2^{n-1}}{4^n}\frac{1}{L} =\frac{1}{2L}\]
and so 
$$\norm{\mu}\geq \norm{\delta(1)}-\frac{1}{2L}=\rho(1,0)-\frac{1}{2L} \geq \frac{1}{L} -\frac{1}{2L} = \frac{1}{2L} >0.$$ 
However, $\mu\in \ker(\hat{f})$ since $$\hat{h}(\hat{Id}(\mu))=\hat{h}(\gamma)=0.$$
Finally, notice that if we set $A:=C_{\rho}$, then $(A,\rho)$ is totally disconnected since $Id:([0,1], \rho) \to ([0,1], |\cdot|)$ is a homeomorphism. Also, $\mu \in \Free(A)$.
\end{proof}

Recall that a \textit{metric arc} is a metric space which is homeomorphic to the unit interval $[0,1]$. Also, we say that a metric space $M$ is \textit{bounded turning} if there exists a constant $C\geq 1$ such that every pair of points $(x,y) \in M^2$ is contained in a compact and connected set $S$ such that $\diam(S) \leq C d(x,y)$. In \cite{HerronMeyer12}, a family of metrics $(d_\Delta)_{\Delta \in \mathcal D}$ on $[0,1]$ is defined in such a way that every bounded turning metric 
arc is bi-Lipschitz equivalent to some curve $([0,1] , d_\Delta)$, $\Delta \in \mathcal D$.
We are grateful to Chris Gartland for bringing \cite{HerronMeyer12} to our attention and suggesting the next corollary.
\begin{corollary}
Let $M$ be a metric space. 
If $M$ contains a bounded turning metric arc, then $M$ is not Lip-lin injective. 
\end{corollary}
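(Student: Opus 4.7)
The plan is to combine the heredity of Lip-lin injectivity (Lemma~\ref{l:heredity}) with the bi-Lipschitz classification of bounded turning arcs from~\cite{HerronMeyer12}, and then to apply Proposition~\ref{p:p1uNotSufficient}.

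Let $A \subset M$ be a bounded turning metric arc. By Lemma~\ref{l:heredity}, to show that $M$ is not Lip-lin injective it suffices to prove that $A$, viewed as a standalone metric space, is not Lip-lin injective. By the Herron-Meyer theorem, $A$ is bi-Lipschitz equivalent to $([0,1], d_{\Delta})$ for some $\Delta \in \mathcal{D}$; Lemma~\ref{l:heredity} applied in both directions transfers Lip-lin injectivity along this equivalence, so the task reduces to showing that $([0,1], d_{\Delta})$ is not Lip-lin injective for every such $\Delta$.

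For this last step, the natural tool is Proposition~\ref{p:p1uNotSufficient} applied with $\rho = d_{\Delta}$. Its hypotheses are that $Id^{-1} \colon ([0,1], |\cdot|) \to ([0,1], d_{\Delta})$ is continuous---immediate from $d_{\Delta}$ inducing the Euclidean topology on $[0,1]$---and that $|x-y| \leq L\, d_{\Delta}(x,y)$ for some $L > 0$. The latter should follow from the fact that, by construction, $d_{\Delta}$ is essentially a path metric of a curve parameterized by $[0,1]$ and hence dominates the Euclidean distance up to a constant. Once both hypotheses are verified, the proposition directly yields the desired injective Lipschitz map from $([0,1], d_{\Delta})$ with non-injective linearisation.

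The main obstacle is precisely this Lipschitz comparison, whose verification depends on the fine details of the construction in~\cite{HerronMeyer12}. Should it fail for some $\Delta$, the fallback is to work directly with the arc $A$ (with endpoints $a,b$) via the ``diameter projection'' $\pi(x) = \diam([a,x]_A)/\diam(A)$, where $[a,x]_A$ denotes the sub-arc from $a$ to $x$: this $\pi$ is $C$-Lipschitz and surjective thanks to the bounded turning property. Were $\pi$ injective, its inverse would give a parameterization of $A$ by $[0,1]$ with Lipschitz inverse, so Proposition~\ref{p:p1uNotSufficient} would apply directly; since injectivity of $\pi$ is not automatic, one would then either perturb $\pi$, or rerun the Cantor-style construction underlying the proof of Proposition~\ref{p:p1uNotSufficient} intrinsically inside the bounded turning arc.
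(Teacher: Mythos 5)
Your proposal is correct and follows essentially the same route as the paper: Herron--Meyer's classification reduces the problem to $([0,1],d_\Delta)$, the comparison with the Euclidean metric lets one invoke Proposition~\ref{p:p1uNotSufficient}, and Lemma~\ref{l:heredity} transfers the failure of Lip-lin injectivity back to $M$. The one step you hedge on, namely the inequality $|x-y|\leq L\,d_\Delta(x,y)$, is exactly the point the paper handles by asserting that $|\cdot|\leq d_\Delta$ is straightforward from the construction in \cite{HerronMeyer12}, so your fallback via the diameter projection is not needed.
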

\begin{proof}
Let $M$ be a metric space and let $\Gamma \subset M$ be a bounded turning metric arc. 
On the one hand, thanks to \cite[Theorem~A]{HerronMeyer12}, $\Gamma$ is bi-Lipschitz equivalent to some curve $([0,1] , d_\Delta)$, $\Delta \in \mathcal D$. On the other hand, it is straightforward to check that $|\cdot| \leq d_{\Delta}$. Thus $([0,1] , d_\Delta)$ is not Lip-lin injective thanks to Proposition~\ref{p:p1uNotSufficient}. Therefore the same is true for $\Gamma$ and $M$ in view of Lemma~\ref{l:heredity}.
\end{proof}

We conclude the section with another two adaptations of Example~\ref{exam:NotLipLin}. The first example shows that one can not remove the boundedness assumption in the last statement of Proposition~\ref{prop:SubsetR}.

\begin{example}\label{e:CountableDiscreteR}
\textit{There exists a countable complete and discrete $M\subset \mathbb R$ and a Lipschitz and injective $f\colon M \to [0,1]$ such that $\ker\hat{f}\neq \set{0}$. Moreover $\lambda(M) = 0$, therefore $\F(M) \equiv \ell_1$ (see \cite{Godard}).}
\smallskip

\begin{proof}
Let $M=\set{0,1}\cup \set{x_n:n\in \N} \cup \set{y_n:n\in \N}$, where $x_{2^{n-1}+k}=2^{n-1}+k+1$ and $y_{2^{n-1}+k}=x_{2^{n-1}+k}+\frac{1}{4^{n}}$, $k=0,\ldots, 2^{n-1}-1$. Note that $d(x,y)\geq \frac{1}{2}$ for $x\neq y\in M$ except if $\{x,y\}=\{x_n,y_n\}$ for some $n$. Let $C \subset [0,1]$ be again the Smith-Volterra-Cantor set. To avoid confusion with the elements of $M$, we now write 
$(x_{2^{n-1}+k}',y_{2^{n-1}+k}')$, $k=0,\ldots, 2^{n-1}-1$, the intervals of length $4^{-n}$ which are removed at the $n$-th stage of the usual construction of a Cantor set.
We define $h\colon M \to [0,1]$ by $h(0)=0$, $h(1)=1$, $h(x_n)=x_n'$ and $h(y_n)=y_n'$.
Notice that $h \in \Lip_0(M,[0,1])$ and $h$ is injective.
Thanks to Example~\ref{exam:NotLipLin}, there exists $\mu' \in \Free([0,1])$ and an injective $f\in \Lip_0([0,1])$ such that $\hat{f}(\mu')$ = 0.
We have in fact $\mu'=\delta(1)-\sum_{n=1}^\infty (\delta(y_n')-\delta(x_n'))$.
Observe that $\mu'=\hat{h}(\mu)$ where $\mu=\delta(1)-\sum_{n=1}^\infty (\delta(y_n)-\delta(x_n))$.
The last claim is proved by noticing that the series defining $\mu$ converges absolutely (which is clear).
It only remains to prove that $\mu \neq 0$, which is done as follows :
\[ \sum_{n=1}^\infty \norm{\delta(y_n)-\delta(x_n)}= \sum_{n=1}^\infty \frac{2^{n-1}}{4^n} =\frac{1}{2}\]
and so $\norm{\mu}\geq 1/2$ as $\norm{\delta(1)}=1$.
\end{proof}
\end{example}

\begin{example}\label{e:CountableDiscrete}
\textit{There exists a bounded countable complete and discrete $M$ and a Lipschitz and injective $f\colon M \to [0,1]$ such that $\ker\hat{f}\neq \set{0}$.}
\smallskip

\begin{proof} 
Let $M=\set{0,1}\cup \set{x_n:n\in \N} \cup \set{y_n:n\in \N}$. 
We define all distances between distinct points to be $1$ except for $d(x_n,y_n)=4^{-k}$ when $n=2^{k-1},\ldots,2^k-1$. 
Now the rest of the proof is verbatim the same as the previous one, so we leave the details to the reader. 
\end{proof}
\end{example}

Notice that this example can be modified in an obvious manner to moreover obtain either 
\begin{itemize}
    \item[a)] $M$ is a subset of an $\R$-tree which contains all the branching points of that tree, i.e. $\Free(M)\equiv \ell_1$, or
    \item[b)] $M$ is ultrametric.
\end{itemize}

\section{The bidual}

In this section, we deal with operators of the kind $\hat{f}^{**}=C_f^*: \F(M)^{**} \to \F(N)^{**}$. 
Of course, if $\widehat{f}$ is non-injective, then so is $\hat{f}^{**}$. So, thanks to the previous section, assuming that $f$ is injective is clearly not sufficient for $\hat{f}^{**}$ to be injective. One goal of this section is to characterize when $\hat{f}^{**}$ is injective, for instance in terms of properties of $f$.

Recall that an operator $T\colon X \to Y$ is \emph{tauberian} if $T^{**-1}(Y) \subset X$. Let us give a few general facts about tauberian operators (we refer the reader to \cite{TauberianBook, KaltonWilansky} for more background information). 
It is clear that if $T$ is tauberian, then $\ker T^{**} \subset X$.
Therefore, for $T$ tauberian, $T$ is injective if and only if $T^{**}$ is injective.
Further, if $T$ has closed range, then $\ker T^{**} \subset X$ implies that $T$ is Tauberian (see \cite[page 251]{KaltonWilansky}).
For the above reasons it could be helpful to know under which condition on $f$, the operator $\hat{f}$ is tauberian.

\begin{proposition}\label{p:BiAdjointInjective}
Let $f \in \Lip_0(M,N)$ be an injective function. 
If $\ker\hat{f}^{**}\subset \Free(M)$ then $f$ is bi-Lipschitz.
In particular, if $\hat{f}$ is tauberian then $f$ is bi-Lipschitz.
\end{proposition}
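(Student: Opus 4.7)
The plan is to prove the contrapositive: assuming $f$ is injective and Lipschitz but not bi-Lipschitz, I will exhibit $\mu\in\ker\hat{f}^{**}\setminus\Free(M)$.

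I would start from a sequence of unit molecules witnessing the failure of bi-Lipschitzness. Pick $(x_n),(y_n)\subset M$ with $x_n\neq y_n$ and $\varepsilon_n:=d(f(x_n),f(y_n))/d(x_n,y_n)\to 0$; the molecules $m_n:=(\delta(x_n)-\delta(y_n))/d(x_n,y_n)$ lie on the unit sphere of $\Free(M)$ while $\|\hat{f}(m_n)\|=\varepsilon_n\to 0$. Fix a non-principal ultrafilter $\mathcal U$ on $\mathbb N$ and define $\mu\in\Lip_0(M)^{*}=\Free(M)^{**}$ by $\mu(g):=\lim_{\mathcal U}\langle g,m_n\rangle$; the definition is legitimate since $|\langle g,m_n\rangle|\le\|g\|_L$. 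The estimate $|\langle h\circ f,m_n\rangle|\le\|h\|_L\varepsilon_n\to 0$ for every $h\in\Lip_0(N)$ yields $\hat{f}^{**}\mu=0$ at once.

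The main obstacle is arranging that $\mu\notin\Free(M)$. Passing to suitable subsequences, I split into two complementary cases. In the \emph{discrete case} (when $\{x_n,y_n\}$ has no accumulation in $M$ and may be assumed uniformly discrete), setting $D:=\{x_n,y_n:n\in\mathbb N\}\cup\{0\}$ yields $\Free(D)\simeq\ell_1$, and $\mu$ factorizes through the restriction $\Lip_0(M)\to\Lip_0(D)$ to give a purely finitely additive element of $\ell_1^{**}=\ell_\infty^{*}$, lying outside $\ell_1=\Free(D)$; since $\Free(D)^{\perp\perp}=\Free(D)$ as a subspace of $\Free(M)$, this translates to $\mu\notin\Free(M)$. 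In the \emph{accumulating case} (when $\{x_n,y_n\}$ accumulates at some $p\in M$), injectivity of $f$ combined with $d(f(x_n),f(y_n))\to 0$ forces $d(x_n,y_n)\to 0$ and $x_n,y_n\to p$: otherwise two distinct accumulation points would be identified by $f$, contradicting injectivity. Re-basepointing at $p$ (harmless since free spaces with different base points are isometrically isomorphic), $\Free(\{p\})=\{0\}$ and the standard bump-function argument from Section~\ref{Prelim:Free} yields $\supp\mu\subset\{p\}$; by the Intersection Theorem, $\mu\in\Free(M)$ would then force $\mu\in\Free(\{p\})=\{0\}$. One finally ensures $\mu\neq 0$ by refining the subsequence so that $(d(x_n,p)-d(y_n,p))/d(x_n,y_n)\to\alpha\neq 0$, giving $\mu(g)=\alpha$ for $g(t):=d(t,p)$.

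The main technical difficulty will be the accumulating case: extracting a subsequence and an ultrafilter so that a single Lipschitz function witnesses $\mu\neq 0$ in a general metric space that might lack any differentiable structure at the accumulation point. Finally, the ``in particular'' clause is immediate: $\hat{f}$ tauberian means by definition $(\hat{f}^{**})^{-1}(\Free(N))\subseteq\Free(M)$, hence in particular $\ker\hat{f}^{**}\subseteq\Free(M)$, and the main implication applies.
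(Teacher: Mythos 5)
Your overall strategy is the same as the paper's: take molecules $m_n=(\delta(x_n)-\delta(y_n))/d(x_n,y_n)$ witnessing the failure of bi-Lipschitzness, observe that any weak$^*$ accumulation point $\mu\in\Free(M)^{**}$ lies in $\ker\hat{f}^{**}$ (this part of your argument, and the ``in particular'' clause, are correct), and then show that some such $\mu$ lies outside $\Free(M)$. The gap is in this last step. First, your dichotomy is not exhaustive: between the ``uniformly discrete'' and ``accumulating'' cases sits the configuration where $\set{x_n,y_n}$ is closed and discrete but not uniformly discrete and possibly unbounded (think $x_n=n$, $y_n=n+1/n$ with $d(x_n,y_n)\to 0$ while the pairs escape to infinity); no subsequence brings you back into either of your cases, and your $\ell_1$-argument (which in any case needs $D$ \emph{bounded} and uniformly discrete for $\Free(D)\simeq\ell_1$) does not apply. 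Second, in the accumulating case the deduction ``$d(f(x_n),f(y_n))\to 0$, hence $x_n,y_n\to p$'' uses implicitly that $d(x_n,y_n)$ is bounded and that $(y_n)$ has a convergent subsequence, neither of which is given; and, more seriously, your witness for $\mu\neq 0$ fails precisely when $(d(x_n,p)-d(y_n,p))/d(x_n,y_n)\to 0$ along every subsequence (e.g.\ $x_n,y_n$ equidistant from $p$), which you acknowledge but do not resolve. Producing a single Lipschitz function that keeps $\langle g,m_n\rangle$ away from $0$ is genuinely nontrivial and is exactly the content of the external result the paper invokes.

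The paper bypasses all of this with two facts from~\cite{GLPPRZ}: a sequence of molecules never converges weakly to $0$ (Corollary~2.10 there), and every nonzero weak accumulation point in $\Free(M)$ of a sequence of molecules is itself a molecule $m_{xy}$ (Proposition~2.9 there). If every weak$^*$ accumulation point of $(m_n)$ belonged to $\Free(M)$, then either they are all $0$ (so $m_n\to 0$ weakly, contradiction) or one of them is $m_{xy}$ with $\hat{f}(m_{xy})=0$, i.e.\ $f(x)=f(y)$, contradicting injectivity. To repair your proof you should either import these two results or supply a uniform argument covering all configurations of $(x_n,y_n)$ at once; the case analysis as written does not close.
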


\begin{proof}
Assume that $f$ is not bi-Lipschitz. 
Then there are sequences $(x_n)_n,(y_n)_n \subset M$ such that $x_n \neq y_n$ for every $n \in \N$ and moreover
$$\lim\limits_{n \to \infty} \frac{d(f(x_n),f(y_n))}{d(x_n,y_n)} = 0.$$
Let us denote
$$ \forall n \in \N, \quad m_{x_n y_n} := \frac{\delta(x_n) - \delta(y_n)}{d(x_n , y_n)}.$$
Notice that for every weak$^*$ accumulation point $\mu \in \F(M)^{**}$ of the sequence $(m_{x_ny_n}) \subset S_{\F(M)}$, we have
\[
\forall g\in \Lip_0(N), \quad \duality{\hat{f}^{**}\mu,g}=\duality{\mu,g\circ f}=0,
\]
that is $\mu \in \ker\hat{f}^{**}$.
Indeed, every weak$^*$ neighborhood 
\[V_\varepsilon=\set{\gamma \in \Free(M)^{**}:\abs{\duality{\gamma-\mu,g\circ f}}<\varepsilon}\] contains infinitely many terms $m_{x_ny_n}$, so the conclusion readily follows.
\smallskip

We will show that there is an accumulation point of $(m_{x_ny_n})$ in $(B_{\Free(M)^{**}},w^*)$ which does not belong to $\Free(M)$.
Assume that all accumulation points are in $\Free(M)$.
Assume first that all accumulation points are $0$.
Then $m_{x_ny_n}\to 0$ weakly, which is not possible by \cite[Corollary~2.10]{GLPPRZ}.
So there is some $\mu\neq 0$ in the weak closure of $(m_{x_ny_n})$.
By~\cite[Proposition~2.9]{GLPPRZ}, $\mu=m_{xy}$ for some $x\neq y\in M$.
Since $\hat{f}(\mu)=\hat{f}^{**}(\mu)=0$ we get that $f(x)=f(y)$, which contradicts the injectivity hypothesis on $f$.
\end{proof}

\begin{corollary}\label{c:BiAdjointInjective} Let $f\in \Lip_0(M,N)$.  
The following assertions are equivalent.
\begin{itemize}
    \item[(i)] $\hat{f}^{**}$ is injective.
    \item[(ii)] $f$ is injective and $\ker\hat{f}^{**}\subset \Free(M)$.
    \item[(iii)] $f$ is injective and $\hat{f}$ is tauberian.
    \item[(iv)] $f$ is bi-Lipschitz.
    \item[(v)] $\widehat{f}$ is injective with closed range.
    \item[(vi)] $C_f$ is onto.
    \item[(vii)] $C_f$ has dense range.
\end{itemize}
\end{corollary}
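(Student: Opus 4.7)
The plan is to assemble the equivalences as a cycle using two results already established in the excerpt as the main inputs: Proposition~\ref{fchapinjbilip} for the block (iv)\,$\Leftrightarrow$\,(v)\,$\Leftrightarrow$\,(vi), and Proposition~\ref{p:BiAdjointInjective} for the nontrivial direction (ii)\,$\Rightarrow$\,(iv). Everything else will be either a triviality or a standard duality argument between $\hat{f}$ and $C_f = \hat{f}^*$.

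First, I would dispatch the three ``easy'' blocks. The equivalence (iv)\,$\Leftrightarrow$\,(v)\,$\Leftrightarrow$\,(vi) is literally the statement of Proposition~\ref{fchapinjbilip}. The equivalence (i)\,$\Leftrightarrow$\,(vii) is the general Banach-space fact that a bounded operator $T$ has norm-dense range if and only if $T^*$ is injective, applied to $T=C_f$; using $C_f^*=\hat{f}^{**}$ as recalled in Subsection~\ref{Prelim:LipOperator}, this reads exactly as (vii)\,$\Leftrightarrow$\,(i). Moreover (vi)\,$\Rightarrow$\,(vii) is immediate (onto is stronger than dense range), and (i)\,$\Rightarrow$\,(ii) is immediate too: $\hat{f}^{**}$ injective forces $\hat{f}$ injective, hence $f$ injective, and $\ker\hat{f}^{**}=\{0\}\subset \F(M)$.

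Second, I would use Proposition~\ref{p:BiAdjointInjective} for (ii)\,$\Rightarrow$\,(iv), which is the only non-formal step. Combined with the already established (iv)\,$\Rightarrow$\,(v)\,$\Rightarrow$\,(vi)\,$\Rightarrow$\,(vii)\,$\Rightarrow$\,(i), we get the cycle (i)\,$\Rightarrow$\,(ii)\,$\Rightarrow$\,(iv)\,$\Rightarrow$\,(v)\,$\Rightarrow$\,(vi)\,$\Rightarrow$\,(vii)\,$\Rightarrow$\,(i), so (i)--(ii) and (iv)--(vii) are all equivalent.

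Finally, for (iii), I would observe that by Proposition~\ref{fchapinjbilip} the property (iv) means that $\hat{f}$ is an into-isomorphism, so it has closed range and $\ker\hat{f}^{**}=\{0\}$; by the characterization of tauberian operators with closed range recalled just before Proposition~\ref{p:BiAdjointInjective} (namely, closed range plus $\ker T^{**}\subset X$ implies $T$ tauberian), $\hat{f}$ is tauberian, and since $f$ is obviously injective we obtain (iv)\,$\Rightarrow$\,(iii). For (iii)\,$\Rightarrow$\,(ii), the defining inclusion $\hat{f}^{**-1}(\F(N))\subset \F(M)$ of tauberianness trivially yields $\ker\hat{f}^{**}\subset \F(M)$. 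No genuine obstacle remains, since the substantive content was absorbed into Proposition~\ref{p:BiAdjointInjective}.
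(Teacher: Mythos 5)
Your proposal is correct and follows essentially the same route as the paper: Proposition~\ref{fchapinjbilip} for $(iv)\Leftrightarrow(v)\Leftrightarrow(vi)$, Proposition~\ref{p:BiAdjointInjective} for $(ii)\Rightarrow(iv)$, the standard duality between dense range of $C_f$ and injectivity of $C_f^*=\hat{f}^{**}$ to close the cycle, and the closed-range criterion for tauberian operators to handle $(iii)$. The only cosmetic difference is that the paper derives $(iii)$ from $(ii)$ and $(iv)$ together rather than from $(iv)$ alone, but both arguments invoke the same fact from \cite{KaltonWilansky}.
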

Notice that every point other than $(ii)$ and $(iii)$ implies implicitly that $f$ must be injective. Also, to the best of our knowledge, the equivalence between $(vi)$ and $(vii)$ seems to be new.  

\begin{proof}
The implications $(i)$ $\implies$ $(ii)$ and $(vi)$ $\implies$ $(vii)$ are trivial.
The implication $(ii)$ $\implies$ $(iv)$ follows from Proposition~\ref{p:BiAdjointInjective} while
$(iv)$ $\iff$ $(v)$ $\iff$ $(vi)$ follows from Proposition~\ref{fchapinjbilip}
(see also \cite[Proposition~2.25]{Weaver2}). Next,
the equivalence of $(i)$ and $(vi)$ follows from the general theory of adjoint operators (see \cite[Exercise~2.46]{FHHMZ} for instance). The above lines prove that all assertions, except for $(iii)$, are equivalent. Now notice that $(iii)$ $\implies$ $(ii)$ is obvious. Finally, since for a bi-Lipschitz map $f$, $\hat{f}$ has closed range (see Proposition~\ref{fchapinjbilip}), we obtain that $\hat{f}$ is tauberian whenever $\ker\hat{f}^{**} \subset \Free(M)$ (see \cite[page 251]{KaltonWilansky}), which proves that $(ii)$ and $(iv)$ $\implies$ $(iii)$.
\end{proof}

\begin{remark}
In~\cite{JNST2015} it is proved that there exist injective tauberian operators on $L_1[0,1]$ that have dense non-closed range.
The above corollary shows that such operators cannot be obtained as linearizations of Lipschitz maps $f\colon [0,1]\to [0,1]$ since, more generally, this combination of properties is excluded for linearization of Lipschitz maps between any two metric spaces. 
\end{remark}

We conclude the section with two examples.
\smallskip

\begin{example}
~
\begin{itemize}
    \item[a)] Let $M=N=[0,1]$ and $f(x)=x^2$. On the one hand  $\widehat{f}$ is injective thanks to Proposition~\ref{prop:biLipsupp}. On the other hand $\widehat{f}^{**}$ is not injective since $f$ is not bi-Lipschitz.
    \smallskip
    
    Let us prove by a direct argument that $\hat{f}^{**}$ is not injective.
    We will define $\mu \in \Free(M)^{**}$ such that $\widehat{f}^{**}(\mu)=0$. 
    First, we consider the subspace $E=\set{\varphi \in \Lip_0(M): \varphi'(0) \in \R}$ of $\Lip_0(M)$.
    We define $\mu(\varphi):=\varphi'(0)$ for every $\varphi \in E$.
    This is a bounded linear functional on $E$ and we extend it as a bounded linear functional on the whole $\Lip_0(M)$. 
    Now it is clear that 
    \[\forall \varphi \in \Lip_0(N), \quad \duality{\hat{f}^{**}\mu,\varphi}=\duality{\mu,\varphi\circ f}=0.\]
    So $\hat{f}^{**}$ is not injective.
    \smallskip
    
    \item[b)] Let $M=N=\set{\frac1n}\cup \set{0}$ and $f(x)=x^2$ (notice that $M$ is Lip-lin injective in this case). Then similarly one has $\widehat{f}$ is injective while $\widehat{f}^{**}$ is not injective. Now for an example of $\mu \in \ker(\widehat{f}^{**})$ as constructed above, one may consider the Hahn-Banach extension of $\varphi \mapsto \lim_n n\varphi(\frac1n)$. We leave the details to the reader.
\end{itemize}
\end{example}

\section{Final remarks and open questions}

Naturally, after dealing with the injectivity, one may wonder what is the situation with respect to surjectivity. That is, one can study the implications ``$f$ surjective $\implies$ $\widehat{f}$ surjective'' and ``$\widehat{f}$ surjective $\implies$ $f$ surjective''. In fact, none of these implications are true in general. 

To begin with, it is rather easy to find examples of surjective maps $f$ such that $\widehat{f}$ are not surjective. Indeed, in view of Proposition~\ref{fchapinjbilip}, whenever $\widehat{f}$ is injective but $f$ is not bi-Lipschitz, we obtain that $\widehat{f}$ cannot be surjective (otherwise $\widehat{f}$ would be a linear isomorphism, which can happen only when $f$ is bi-Lipschitz). For instance, we already explained that, for $f \colon x \in [0,1] \mapsto x^2 \in [0,1]$, $\widehat{f}$ is injective. However, since $f$ is not bi-Lipschitz, $\widehat{f}$ cannot be an isomorphism and so $\widehat{f}$ is not surjective.  

On the other hand, there are some situations where $\widehat{f}$ surjective implies $f$ surjective:
\begin{proposition}
Let $M,N$ be complete pointed metric spaces. If one of the following conditions is satisfied, then $f$ is surjective whenever $\widehat{f}$ is so.
\begin{enumerate}[$(a)$]
    \item $f$ is bi-Lipschitz;
    \item $\hat{f}$ is injective;
    \item $M$ is compact;
    \item $M$ is uniformly discrete and bounded.
\end{enumerate}
\end{proposition}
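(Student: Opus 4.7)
The overall plan is to exploit the dictionary between $f$ and $\widehat f$ recalled in Section~\ref{Prelim:LipOperator}, combined with the open mapping theorem and, in the discrete case, Lemma~\ref{l:ACseries}. The arguments are essentially immediate in cases $(a)$, $(b)$, $(c)$; case $(d)$ is the only one requiring real content.

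For cases $(a)$ and $(b)$: if $f$ is bi-Lipschitz (case~$(a)$), Proposition~\ref{fchapinjbilip} gives that $\widehat f$ is injective, so together with the surjectivity assumption $\widehat f$ is a bounded linear bijection between Banach spaces. The open mapping theorem makes $\widehat f$ a linear isomorphism, and the second bullet of Section~\ref{Prelim:LipOperator} yields that $f$ is a Lipschitz isomorphism, in particular surjective. Case~$(b)$ is identical. For case $(c)$, $\widehat f$ surjective has dense range, so $f(M)$ is dense in $N$ by the fourth bullet of Section~\ref{Prelim:LipOperator}; compactness of $M$ makes $f(M)$ closed, so $f(M) = \overline{f(M)} = N$.

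Case $(d)$ is treated by a direct argument. Fix $y \in N \setminus \{0\}$; I would like to show $y \in f(M)$. By surjectivity of $\widehat f$ there exists $\mu \in \Free(M)$ with $\widehat f(\mu) = \delta(y)$. Since $M$ is bounded and uniformly discrete, $\Free(M) \equiv \ell_1(M \setminus \{0\})$ via $\delta(x) \mapsto e_x$ (see \cite[Proposition~4.4]{Kalton04}), so $\mu$ admits a unique representation $\mu = \sum_i a_i \delta(x_i)$ with pairwise distinct $x_i \in M \setminus \{0\}$ and $\sum_i |a_i| < \infty$. Applying $\widehat f$, I would group terms sharing a common image: for each $z \in Z := \{f(x_i)\}$, set $I_z := \{i : f(x_i) = z\}$ and $b_z := \sum_{i \in I_z} a_i$. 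Since $M$ is bounded and $f$ is Lipschitz, $f(M)$ is bounded in $N$, so the rearranged series $\sum_z b_z \delta(z) = \delta(y)$ is absolutely convergent in $\Free(N)$. Discarding zero coefficients and a possibly present term at $0_N$, I obtain $\delta(y) = \sum_{z \in Z'} b_z \delta(z)$ with $Z' \subset f(M) \setminus \{0\}$ consisting of distinct points, all $b_z \neq 0$, and $\sum_z |b_z| \leq \sum_i |a_i| < \infty$. Lemma~\ref{l:ACseries} applied to the two representations $\delta(y) = 1 \cdot \delta(y) = \sum_{z \in Z'} b_z \delta(z)$ then forces $Z' = \{y\}$ and $b_y = 1$, whence $y \in f(M)$.

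The main obstacle is verifying in case~$(d)$ the two integrability conditions needed to apply Lemma~\ref{l:ACseries}: uniform discreteness yields $\sum_i |a_i| < \infty$ from $\mu \in \Free(M)$, while boundedness of $M$ ensures the rearranged series in $\Free(N)$ remains absolutely convergent. Once these are in place, the $\ell_1$-independence of the family $\delta(M\setminus\{0\})$ provided by Lemma~\ref{l:ACseries} immediately forces $y$ to appear in the (essentially unique) support of $\delta(y)$.
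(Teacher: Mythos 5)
Your proof is correct and follows essentially the same route as the paper: cases $(a)$--$(c)$ via the standard dictionary between $f$ and $\widehat f$ (the paper phrases $(a)$ and $(c)$ through ``dense range plus closed range'' rather than the open mapping theorem, but this is the same content), and case $(d)$ via the $\ell_1$-representation of $\mu$ and Lemma~\ref{l:ACseries}. In fact your treatment of $(d)$ is slightly more careful than the paper's, since you explicitly group the terms $a_i\delta(f(x_i))$ sharing a common image before invoking the lemma (which requires distinct points and nonzero coefficients), a step the paper leaves implicit.
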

\begin{proof}
It is rather easy to see that $f$ has dense range if and only if $\widehat{f}$ has dense range (see e.g. \cite[Proposition~2.1]{ACP20}). So, if the range of $f$ is closed and $\widehat{f}$ is surjective, then $f$ must be surjective. This implies assertions (a) and (c). Assertion (b) follows from the fact that if $\widehat{f}$ is both injective and surjective, then it is an isomorphism, and therefore $f$ must be bi-Lipschitz. Finally, let us prove (d): given $y\in N$, let $\mu=\sum_{n\geq 0}a_n \delta(x_n)\in \mathcal F(M)$ with $\hat{f}(\mu)=\delta(y)$. Then $\sum_{n\geq 0} a_n \delta(f(x_n)) = \delta(y)$, where $(a_n)\in \ell_1$. By Lemma~\ref{l:ACseries}, there exists $n$ such that $y=f(x_n)$, so $y\in f(M)$. 
\end{proof}

The next example witnesses the fact that there are Lipschitz maps such that $\widehat{f}$ is surjective while $f$ is not surjective.

\begin{example}\label{ex:fHatSfNonS}
There are complete separable pointed metric spaces $M, N$ for which there is $f\in \Lip_0(M,N)$ such that $\widehat{f}$ is surjective but $f$ is not surjective. In the constructions below, both of the spaces are subsets of $\R$-trees and both free spaces are isometric to $\ell_1$. On the other hand, $f$ will not be injective. 

Let $N=\set{y_n\in \R:y_n=1-\frac1n, n \in \N \cup \set{\infty}}$ (here $y_\infty=1$) together with the induced distance from $\R$. (Notice that $y_1=0$.)
Let $M=\set{x_n:n \in \N} \cup \set{x_n': n\in \N}$ where $0=x_1$ is the base point.
We consider $M$ as a subspace of an $\R$-tree with the only branching point at $0$ from which it stems an infinity of branches (i.e. isometric copies of $[0,\infty)$) $b_n$. 
For every $n \in \N$ we have $x_n', x_{n+1} \in b_n$ in such a way that $d(0,x_n')=1$ and $d(0,x_{n+1})=1+d(y_{n+1},y_n)$.
Further we define $f:M \to N$ as 
$f(x_n)=f(x_n')=y_n$ for every $n \in \N$ (and $f(0)=0$).
One can check easily that $f$ is Lipschitz.
We also see immediately that $y_\infty \notin f(M)$.
On the other hand, it follows from Godard's work \cite{Godard} that $\Free(N)$ is isometric to $\ell_1$,
with $(m_{y_{n+1}y_n})_{n \in \N}$ being the $\ell_1$-basis isometrically, where $m_{y_{n+1}y_n} = d(y_{n+1},y_n)^{-1}(\delta(y_{n+1})-\delta(y_n))$. 
Now clearly, $\widehat{f}\left(m_{x_{n+1}x_n'}\right)=m_{y_{n+1}y_n}$ for every $n \in \N$.
Thus $\widehat{f}(\overline{\lspan}(m_{x_{n+1}x_n'}))=\Free(N)$.
\end{example}

We may interpret such an example in a more abstract setting as follows.

\begin{example}
Let $M$ be a metric space, $\sim$ an equivalence relation on $M$ and $M/\approx$ the metric quotient defined as in \cite[Definition~1.22]{Weaver2}, and let $M_\sim$ be the completion of $M/\approx$. Consider the canonical projection $f\colon M\to M_\sim$ sending each element to its equivalence class. Then $C_f\colon \Lip_0(M_\sim)\to \Lip_0(M)$ is an isometry (see Proposition~2.28 in \cite{Weaver2}). Thus, $\hat{f}$ is surjective. Now, if $M$ and $\sim$ are chosen so that $M/\approx$ is not complete (this is the case, for instance, of Example~1.24 in \cite{Weaver2}), then $f(M)=M/\approx$ is a proper subset of $M_\sim$. So, $f$ is not surjective.
\end{example}

We will now conclude the paper with some open questions. Recall that we proved in Corollary~\ref{cor:injEQps} that, when $M$ is bounded, a Lipschitz map $f : M \to N$ preserves supports if and only if $\widehat{f} : \F(M) \to \F(N)$ is injective. An obvious question is whether this result remains valid for a general metric space $M$. Since one implication is always true, see Proposition~\ref{PSimpliesINJ}, it only remains one implication to study:
\begin{question}
Let $M$ be an unbounded metric space. Assume that $f : M \to N$ is a Lipschitz map such that $\widehat{f} : \F(M) \to \F(N)$ is injective. Is it true that $f$ preserves supports? 
\end{question}

In Theorem~\ref{t:CharCptTotDiscoLLI} 
we proved, in the compact setting, that $M$ is Lip-lin injective and totally disconnected if and only if $s_0(M)$  
uniformly separates the points of $M$.
Unfortunately, we do not have any characterisation without disconnectedness assumption. In fact, we know that being compact and totally disconnected is not sufficient to be Lip-lin injective (Proposition~\ref{p:p1uNotSufficient}), but it is not clear whether every Lip-lin injective space must be totally disconnected. On the other hand, we proved in 
Corollary~\ref{c:LipLinInjectiveImpliesp1u} that it must be purely 1-unrectifiable.
\begin{question}
Find a (metric) characterisation of compact Lip-lin injective metric spaces. Are they always totally disconnected? Or totally path-disconnected? 
\end{question}
Of course, the same question in the general (non-compact) case is left open as well.

\section*{Acknowledgments}

The first author was supported by the grants of Ministerio de Economía, Industria y Competitividad
MTM2017-83262-C2-2-P and PID2021-122126NB-C32; and Fundación Séneca Región de Murcia 20906/PI/18. The two last authors were partially supported by the French ANR project No. ANR-20-CE40-0006. 
A part of this work was written while the last named author was visiting Departamento de Matemáticas of Universidad de Zaragoza in spring 2021. He wishes to express his gratitude for their hospitality and excellent working conditions during the stay. We thank Fundación CAI-Ibercaja for partial support.
We also thank Chris Gartland, Pedro Tradacete and Abraham Rueda Zoca for useful comments. Finally, we are very grateful to the referees for their careful reading of the paper and their useful suggestions.

\end{document}